\documentclass[a4paper,10pt,reqno]{amsart}
\usepackage{graphicx}
\usepackage{mathrsfs}
\usepackage{amssymb}
\usepackage[colorlinks=true]{hyperref}
\usepackage{enumitem}
\hypersetup{urlcolor=blue, citecolor=red}
\textwidth 6.0in
\textheight 8.8in

% THEOREMS -------------------------------------------------------
\newtheorem{theorem}{Theorem}[section]
\newtheorem{corollary}[theorem]{Corollary}
\newtheorem{lemma}[theorem]{Lemma}
\newtheorem{proposition}[theorem]{Proposition}
\newtheorem{remark}[theorem]{Remark}

\theoremstyle{definition}

\numberwithin{equation}{section}

\newcommand{\Lr}{L^1(\Omega){\times}L^1(\Omega)}

\newcommand{\Sn}{\mathbb{S}^{n-1}}

\newcommand{\Hr}{H^1(\Omega){\times}H^1(\Omega)}

\newcommand{\qo}{\mathcal{L}^n\text{-a.e.\ in } \Omega}

\newcommand{\ve}{\varepsilon}
\newcommand{\eps}{\varepsilon}

\newcommand{\R}{{\mathbb R}}

\newcommand{\N}{{\mathbb N}}
\newcommand{\Rn}{{\R}^n}
\newcommand{\Hn}{{\mathcal H}^{n-1}}
\newcommand{\Ho}{{\mathcal H}^0}
\newcommand{\Ln}{{\mathcal L}^n}
\newcommand{\Le}{{\mathcal L}^1}
\newcommand{\ba}[1]{\begin{eqnarray} #1 \end{eqnarray}}
\newcommand{\be}[1]{\begin{equation} #1 \end{equation}}
\newcommand{\bm}[1]{\begin{multline} #1 \end{multline}}
\newcommand{\bes}[1]{\begin{eqnarray*} #1 \end{eqnarray*}}

\newcommand{\li}{\liminf_{k\to+\infty}}
\newcommand{\ls}{\limsup_{k\to+\infty}}
\newcommand{\aj}{a}
\newcommand{\f}[1]{f^{(#1)}}
\newcommand{\fk}[2]{f^{(#1)}_{#2}}
\newcommand{\F}[1]{F^{(#1)}}
\newcommand{\Fk}[2]{F^{(#1)}_{#2}}
\newcommand{\D}{\mathscr{D}}
\newcommand{\MSt}{\widetilde{{M\!S}}}
\newcommand{\MS}{{M\!S}}
\newcommand{\Dt}{\widetilde{\D}}
\newcommand{\gD}[1]{1\wedge\ell #1}
\DeclareMathOperator*{\diam}{diam}
\DeclareMathOperator*{\esssup}{ess\,sup}
\DeclareMathOperator*{\essinf}{ess\,inf}
\newcounter{rem}[theorem]
\renewcommand*{\therem}{\thetheorem.\arabic{rem}}

	%matteo
	\newcommand{\dist}{\mathrm{dist}}
\newcommand\res{\mathop{\hbox{\vrule height 7pt width .3pt depth 0pt
\vrule height .3pt width 5pt depth 0pt}}\nolimits}

\mathchardef\emptyset="001F
\title[Phase field approximation of cohesive fracture models]{Phase field approximation of cohesive fracture models}

\subjclass[2010]{Primary: 49J45; Secondary: 26B30, 74R10, 35A35}
\keywords{Cohesive fracture, phase field models, $\Gamma$-convergence, damage problems}

\address{Universit\"{a}t Bonn}
\email{sergio.conti@uni-bonn.de}
\email{iurlano@iam.uni-bonn.de}
\address{Universit\`a di Firenze}
\email{focardi@math.unifi.it}

\author[S. Conti, M. Focardi, and F. Iurlano]{S. Conti -- M. Focardi -- F. Iurlano}

\address{IAM, Universit\"{a}t Bonn\\
Endenicher Allee 60, 53115 -- Bonn, Germany}

\address{IAM and HCM, Universit\"{a}t Bonn\\
Endenicher Allee 60, 53115 -- Bonn, Germany}

\address{DiMaI ``U. Dini'', Universit\`a degli Studi di Firenze\\ 
Viale Morgagni 67/A, 50134 -- Firenze, Italy}

\begin{document}
\begin{abstract}
We obtain a cohesive fracture model as a $\Gamma$-limit of scalar damage models in
which the elastic coefficient is computed from the damage variable $v$ through
a function $f_k$ of the form $f_k(v)=\mathrm{min}\{1,\eps_k^{1/2} f(v)\}$, with $f$
diverging for $v$ close to the  value describing undamaged material. The resulting 
fracture energy  can be determined by solving a
one-dimensional vectorial optimal profile problem. It 
is linear in the opening $s$ at small values of $s$ and
has a finite limit as $s\to\infty$.
If the function $f$ is allowed to  depend on the index $k$, 
for specific choices we recover in the limit  Dugdale's and Griffith's 
fracture models, and
 models with surface energy density having a power-law growth at small openings.
\end{abstract}
\maketitle
\section{Introduction}\label{introd}

The modeling of fracture in materials leads naturally to functional spaces with
discontinuities, in particular functions of bounded variation ($BV$) and of
bounded deformation ($BD$). In variational models, the key ingredients are a
volume term, corresponding to the stored energy and depending on the
diffuse part of the deformation gradient, and a surface term, modeling the
fracture energy and depending on the jump part of the deformation
gradient \cite{fra-mar,BFM,barenblatt,dug}. 
For antiplane shear models one can consider a scalar displacement 
$u\in BV(\Omega)$, typical models take the form
\begin{equation}\label{eqfrattura}
  \int_\Omega h(|\nabla u|) dx +   \kappa\,|D^cu|(\Omega)
  + \int_{\Omega\cap J_u} g(|[u]|) d\Hn\,.
\end{equation}
Here $h$ represents the strain energy density,  quadratic near the origin; $g$ is
the surface energy density depending on the opening $[u]$ of the crack, and $\kappa\in[0,+\infty]$ 
is a constant related to the slope of $g$ at 0 and the slope of $h$ at $\infty$. 

In models of brittle fracture one usually considers $g$ to be a constant,
given by twice the energy required to generate a free surface
\cite{fra-mar,BFM}. 
Correspondingly $\kappa=\infty$ and the Cantor part $D^cu$ disappears, so that one
can assume $u\in SBV$.
Physically this represents a situation in which already for the smallest
opening there is no interaction between the two sides of the fracture,
surface reconstruction is purely local. Analytically, the resulting functional
coincides with the Mumford-Shah functional from image segmentation.

In ductile materials fracture proceeds through the opening of a series of
voids, separated by thin filaments which produce a weak bound between the
surfaces at moderate openings \cite{barenblatt,dug,FokouaContiOrtiz2014}. The
function $g$ 
grows then continuously from 
$g(0)=0$ to some finite value $g(\infty)$, representing the energetic cost of
total fracture.  The constant $\kappa$ is its slope at 0, and $D^cu$ represents the
distribution of microcracks. For the same reason the volume energy density $h$
becomes linear at $\infty$. 

A large literature was devoted to the derivation of models like
(\ref{eqfrattura}) from more regular models, like damage or phase field
models, mainly within the framework of $\Gamma$-convergence.
These regularizations can be interpreted as microscopic physical models, 
so that the $\Gamma$-limit justifies the macroscopic model 
(\ref{eqfrattura}), or as regularizations used for example to approximate
(\ref{eqfrattura}) numerically.
Ambrosio and Tortorelli \cite{amb-tort1,amb-tort2} have shown that 
\begin{equation}\label{eqAT}
  \int_\Omega \left( (v^2+o(\eps)) |\nabla u|^2 + \frac{(1-v)^2}{4 \eps} +
  \eps|\nabla v|^2 \right) dx
\end{equation}
converges to the Mumford-Shah functional, which coincides with
(\ref{eqfrattura}) with $h(t)=t^2$, $\kappa=\infty$, $g(t)=1$.
This result was extended in many directions, for example to 
 vector-valued functions \cite{focardi,focardi_tesi}, to 
linearized elasticity \cite{chambolle,addendum,iur12}, to second-order 
problems \cite{AFM},  vectorial problems \cite{Sh},
and models with nonlinear injectivity constraints
\cite{HenaoMoracorralXu}; for numerical simulations 
 we refer to \cite{bel-cos,bou-fra-mar2,bou,bur-ort-sul,bur-ort-sul2}.
There is also a large numerical literature on the application to computer
 vision, see for example \cite{Fusco2003,BarSochenKiryati2006} and references therein. 

Models like (\ref{eqfrattura}) with a linear $h$ were obtained in \cite{AlicBrShah, AlFoc}.
The case with a quadratic $h$ and an affine $g$, i.e. $h(t)=t^2$, $g(t)=t+c$, and $\kappa=+\infty$, described
in \cite{amb-lem-roy} a strain localization plastic process. From the mathematical point of view 
the functional was limit of models like (\ref{eqAT}) with an additional term linear in $|\nabla u|$. 
In \cite{dm-iur,iur} the asymptotic behavior of a generalization of \eqref{eqAT} with different 
scalings of the three terms was analyzed. In one of the several regimes identified, the limiting model 
again exhibited an affine $g$. The result was then extended to the vectorial case in \cite{foc-iur}.
In \cite{iur} a different scaling of the parameters led to the Hencky's diffuse plasticity, i.e. to a model
like (\ref{eqfrattura}) with a linear $g$. This functional can be used to describe ductile fracture only at small openings.
Discrete models for fracture were studied for example in \cite{bra-gar-dm,BraidesGelli2006}.
Up to now, we are unaware of any result in which a ductile fracture model with $g$ 
continuous and bounded, as described above, has been derived. 

In this work we study a damage model as proposed by Pham and Marigo
\cite{PM1,PM2} (cp. Remark~\ref{r:gen}), namely,
\begin{equation}\label{eqPM}
  F_\eps(u,v):=\int_\Omega \left( f_\eps^2(v) |\nabla u|^2 + \frac{(1-v)^2}{4 \eps} + 
  \eps|\nabla v|^2 \right) dx,
\end{equation}
with $u,v\in H^1(\Omega)$, $0\leq v\leq 1$ $\qo$, and $F_\eps(u,v):=\infty$ otherwise,
and show that it converges to a cohesive fracture model like
(\ref{eqfrattura}), where $g$ is a continuous bounded function with $g(0)=0$,
which is linear close to the origin. The potential $f_\ve:[0,1)\to[0,+\infty]$ in \eqref{eqPM} is defined by
\begin{equation}\label{e:feintro}
f_\eps(s):=1\wedge \eps^{1/2}f(s),
\end{equation}
where $f\in C^0([0,1),[0,+\infty))$ is nondecreasing, $f^{-1}(0)=\{0\}$, and it 
 satisfies
\begin{equation}\label{eqdivergencefintro}
\lim_{s\to1}(1-s)f(s)=\ell, \quad \ell\in(0,+\infty).  
\end{equation}
Our main result describes the asymptotic of $(F_\eps)$ as follows.
\begin{theorem}\label{t:mainintro}
Let $\Omega\subset\R^n$ be a bounded Lipschitz set. 

Then, the functionals $F_\eps$ $\Gamma$-converge in $\Lr$ 
  to the functional $F$ defined by
\begin{equation*}
F(u,v):=\begin{cases} 
\displaystyle\int_\Omega h(|\nabla u|)dx+\int_{J_{u}}g(|[u]|)d\Hn+\ell|D^cu|(\Omega) & \textrm{if $v=1$ $\qo$, $u\in GBV(\Omega)$}\cr
+ \infty & \text{otherwise}.
\end{cases} 
\end{equation*}
Here the volume energy density $h$ is set as $h(s):=s^2$ if $s\leq\ell/2$ and as $h(s):=\ell s-\ell^2/4$ otherwise,
while the surface energy density $g$ is given by
\begin{alignat}1\nonumber
  g(s):=\inf &
\left\{
\int_0^1|1-\beta|\sqrt{f^2(\beta)|\alpha'|^2+|\beta'|^2}\,dt:\,(\alpha,\beta) 
\in H^1\big((0,1)\big),\right.
\\&\left. \phantom{\int}\hskip25mm \alpha(0)=0,\ \alpha(1)=s,\ \beta(0)=\beta(1)=1
\right\}.\label{eqintrodefg}
\end{alignat}
\end{theorem}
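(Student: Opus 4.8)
The plan is to prove the $\Gamma$-convergence through the usual two inequalities, after first reducing to a one-dimensional problem via slicing. The key difficulty, as usual in this type of result, is the construction of a recovery sequence whose energy matches the surface term $g(|[u]|)$ defined by the optimal profile problem \eqref{eqintrodefg}, and in particular handling the interplay between the truncation $f_\eps=1\wedge\eps^{1/2}f$ and the divergence \eqref{eqdivergencefintro}. Note that $g$ is not the naive candidate one might expect from a scalar profile problem: because $f$ diverges near $v=1$, the optimal transition profile for $v$ need not start from (or return to) $v=1$ on the microscopic scale, so the minimization is genuinely over pairs $(\alpha,\beta)$, and this is what produces both the linear behaviour at small $s$ and the finite limit as $s\to\infty$.

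\medskip
\noindent\textbf{Reduction to dimension one.} First I would record the basic one-dimensional functionals obtained by slicing $F_\eps$ along lines in a direction $\xi\in\Sn$: on a one-dimensional domain the energy is $\int \bigl( f_\eps^2(v)|u'|^2 + (1-v)^2/(4\eps) + \eps |v'|^2\bigr)\,dt$. By the standard localization/slicing machinery for $\Gamma$-convergence in $BV$ (integral-geometric measures, De Giorgi–Letta, Fubini), the lower bound for $F_\eps$ reduces to the one-dimensional lower bound, and likewise the surface, bulk and Cantor terms of the limit $F$ are controlled slice by slice; this is where $GBV(\Omega)$ enters as the natural space, since no a priori $L^\infty$ bound on $u$ is available. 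I would carefully state the one-dimensional $\Gamma$-limit as a separate proposition: the slice functionals $\Gamma$-converge to $\int h(|u'|)\,dt + \sum_{J_u} g(|[u]|) + \ell |D^c u|$, and then assemble the $n$-dimensional statement from it.

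\medskip
\noindent\textbf{Liminf inequality (one-dimensional).} Given $(u_k,v_k)\to(u,1)$ in $L^1\times L^1$ with bounded energy, I would first extract the limiting displacement $u\in BV$ (using $v_k\to1$ to pass from $f_\eps(v_k)\nabla u_k$ bounded in $L^2$ to $BV$ compactness, via a truncation argument since $f_\eps\le1$). For the bulk part, on the region where $v_k$ is close to $1$ one has $f_\eps(v_k)\le\eps_k^{1/2}f(v_k)$ potentially still of order $1$; a Young-type inequality $f_\eps^2|u'|^2+\eps|v'|^2\ge 2f_\eps(v)|u'||v'|$ combined with $(1-v)^2/(4\eps)$ and the change of variables $t\mapsto v(t)$ yields, after optimizing, exactly the density $h$ with the kink at $\ell/2$ for the absolutely continuous part and the slope $\ell$ for the Cantor part (here \eqref{eqdivergencefintro} gives the constant $\ell$). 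For the jump part, near a point where $u_k$ has a large variation, $v_k$ must dip; rescaling variables on the scale where the transition happens produces a competitor $(\alpha,\beta)$ admissible in \eqref{eqintrodefg} and the lower bound $g(|[u]|)$ follows from the inequality $f_\eps^2|u'|^2+\eps|v'|^2\ge 2f_\eps(v)|u'||v'|$ together with $f_\eps(v)\le\eps^{1/2}f(v)$ in the transition zone; a diagonal/mutual-singularity argument separates the bulk, Cantor and finitely-many-jump contributions.

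\medskip
\noindent\textbf{Limsup inequality.} By density it suffices to treat $u$ piecewise $W^{1,\infty}$ with finitely many jumps (plus a separate approximation handling the Cantor part, which by the one-dimensional structure can be reduced to many small jumps, consistently with the slope $\ell$). Away from the jump set put $v_k=1$ and use the optimal bulk profile dictated by $h$ — i.e. where $|u'|>\ell/2$ insert a thin damaged layer of width $O(\eps_k^{1/2})$ in which $v_k$ is slightly below $1$, matching the slope $\ell$; this is the construction giving linear growth of $h$. At each jump point, take a nearly optimal pair $(\alpha,\beta)$ for $g(|[u]|)$ in \eqref{eqintrodefg}, rescale it to an interval of length $\delta_k\to0$ (chosen so that $\eps_k^{1/2}f(\beta)$ reaches the truncation level exactly where it should, so that $f_\eps(v_k)=\eps_k^{1/2}f(v_k)$ there), define $u_k$ to perform the jump across this layer and $v_k$ from $\beta$; the energy of this layer converges to $g(|[u]|)$ by the equality case of the Young inequality. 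Then glue, check the $L^1$ convergence, and use a standard diagonalization to pass from the approximating $u$'s to general $u\in GBV$. \emph{The main obstacle} I expect is the recovery-sequence construction at the jumps: one must simultaneously respect $0\le v_k\le1$, make $f_\eps(v_k)$ sit precisely on the truncated branch $\eps_k^{1/2}f(v_k)$ over the right portion of the layer, and ensure that the finite-length optimal profile $(\alpha,\beta)$ — which need not have $\beta\equiv1$ outside a compact set — is correctly matched to the outer value $v_k=1$ without extra energy; handling this matching, and proving that the infimum in \eqref{eqintrodefg} is attained (or well-approximated) by profiles of this truncatable type, is the technical heart of the argument.
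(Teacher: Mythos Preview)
Your overall architecture (one-dimensional analysis, then slicing for the liminf) matches the paper, but several of the concrete steps differ, and there is one genuine gap.

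\medskip
\textbf{The bridge between the two forms of $g$.} The surface density $g$ in the statement is a \emph{length-type} integral $\int_0^1|1-\beta|\sqrt{f^2(\beta)|\alpha'|^2+|\beta'|^2}\,dt$, while the rescaled energy near a jump has the \emph{Modica--Mortola} form $\int\bigl(f^2(\beta)|\alpha'|^2+\tfrac{(1-\beta)^2}{4}+|\beta'|^2\bigr)\,dt$. The paper isolates this as a separate lemma (Proposition~4.2): defining $\hat g(s)$ as the $T\to\infty$ limit of the infimum of the Modica--Mortola integral over $\mathcal{U}_s(0,T)$, one has $g=\hat g$. The easy direction ($g\le\hat g$) is the Cauchy inequality $|1-\beta|\sqrt{A}\le A+\tfrac14(1-\beta)^2$ with $A=f^2(\beta)|\alpha'|^2+|\beta'|^2$; the hard direction requires a careful bilipschitz reparametrization by arc-length together with a truncation $\beta\wedge(1-\eta)$ to control the divergence of $f$ near $1$. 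Your ``equality case of the Young inequality'' remark points in this direction but is not a proof: once you know $g=\hat g$, the recovery sequence at a jump is simply $\alpha_\eta(x/\eps_k)$, $\beta_\eta(x/\eps_k)$ for a near-optimal $\hat g$-competitor on $(0,T_\eta)$, and the scaling is exactly $\eps_k$ (there is no extra parameter $\delta_k$ to tune, and $f_{\eps_k}(v_k)=\eps_k^{1/2}f(v_k)$ is automatic since $\beta_\eta<1$). Without this lemma the limsup at jumps does not close.

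\medskip
\textbf{The Cauchy inequalities in your liminf are not the right ones.} For the bulk the paper uses $f_k^2(v)|u'|^2+\tfrac{(1-v)^2}{4\eps_k}\ge |u'|^2\wedge\bigl(\eps_k f^2(v)|u'|^2+\tfrac{(1-v)^2}{4\eps_k}\bigr)\ge |u'|^2\wedge(1-v)f(v)|u'|$, and then \eqref{eqdivergencefintro} gives $(1-\omega(\delta))\,(|u'|^2\wedge\ell|u'|)\ge(1-\omega(\delta))h(|u'|)$ on the set where $v_k>1-\delta$. For the jump part, the relevant Cauchy is between $\tfrac{(1-v)^2}{4\eps_k}$ and the \emph{combined} gradient term $\eps_k\bigl(f^2(v)|u'|^2+|v'|^2\bigr)$, yielding $(1-v)\sqrt{f^2(v)|u'|^2+|v'|^2}$, which after reparametrization is exactly the $g^{(\eta)}$-integrand (Proposition~4.3 then removes the $\eta$). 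Your proposed inequality $f_\eps^2|u'|^2+\eps|v'|^2\ge 2f_\eps(v)|u'||v'|$ does not produce either of these bounds.

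\medskip
\textbf{The $n$-dimensional limsup and the bulk density $h$.} Here the paper takes a different route from your direct construction. Rather than inserting damaged layers where $|\nabla u|>\ell/2$, it first proves the crude bound $F''(u,1)\le\int_\Omega|\nabla u|^2\,dx+\int_{J_u}g(|[u]|)\,d\Hn$ for $u\in SBV^2(\Omega)$ (taking $v_k\equiv 1$ away from the jump set), and then obtains $h$ and the Cantor term by \emph{relaxation} in $BV$ via \cite{bou-bra-but}. For the $n$-dimensional limsup itself the paper does not attempt a direct construction either: it shows (via De~Giorgi's slicing/averaging) that the $\overline\Gamma$-limit is a measure, bounds it by the Mumford--Shah energy to get inner regularity, and then uses the integral representation of \cite{bou-fon-leo-masc} to reduce the surface term to a cell problem on a cube, where the one-dimensional construction suffices. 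Your plan of ``assembling the $n$-dimensional statement'' from the 1D one by slicing works for the liminf but not for the limsup; either follow the paper's abstract route or supply a density class in $GBV$ with polyhedral jump set on which your explicit construction can be carried out.
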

The key difference with the previously discussed work is that in our case the
optimal profiles for the damage variable $v$ and the elastic displacement $u$
cannot be determined separately. They instead arise from a joint vectorial
minimization problem which defines the cohesive energy $g$, specified in \eqref{eqintrodefg}. 
Figures \ref{fig1} and \ref{fig2}  show the behavior of $f_\eps$ and $g$ in the case $f(s)=s/(1-s)$, 
Figure \ref{fig3} shows the profiles $\alpha$, $\beta$ entering  (\ref{eqintrodefg}).

Theorem~\ref{t:mainintro}, in the equivalent formulation given in Theorem \ref{t:gamma-lim} below,  
is proved first in the one-dimensional case in  Section~\ref{s:onedim},
relying on elementary arguments in which we estimate separately the diffuse and jump contributions,
and then extended to the general $n$-dimensional setting in Section~\ref{s:ndim}. This  
extension is obtained by means of several tools. A slicing technique and the above mentioned 
one-dimensional result are the key for the lower bound inequality. 
Instead, the upper bound inequality is proved through the direct methods of $\Gamma$-convergence
on $SBV$, i.e. abstract compactness results and integral representation of the corresponding 
$\Gamma$-limits. The latter methods are complemented with an ad-hoc one-dimensional construction 
to match the lower bound on $SBV$ and a relaxation procedure to prove the result on $BV$. 
Finally, the extension to $GBV$ is obtained via a simple truncation argument.

The issues of equi-coercivity of $F_\eps$ and the convergence of the related minima are dealt with 
in Theorem~\ref{t:comp} and Corollary~\ref{c:min} below, respectively. 

Qualitative properties of the surface energy density $g$ defined in \eqref{eqintrodefg} are analyzed
in Section~\ref{s:gprop}. Its monotonicity, sublinearity, boundedness and linear behavior in the 
origin are established in Proposition~\ref{11}. Proposition~\ref{p2} characterizes $g$ by means of
an asymptotic cell formula particularly convenient in the proof of the $\Gamma$-limsup inequality.
Furthermore, the dependence of $g$ on $f$ is analyzed in detail in Proposition~\ref{p:gl}.
The latter results on one hand show the variety of such a class of functions, and on the other hand 
are instrumental to handle the approximation of other models. 

In Section~\ref{s:further} we discuss how the   phase field approximation scheme
can be used to approximate different fracture models. 
We first consider damage functions of the form
\begin{equation*}
  f_k(s):=\min\{1, \eps_k^{1/2} \max\{f(s), a_ks\}\}
\end{equation*}
and show that if $a_k\to\infty$ and $a_k \eps_k^{1/2}\to0$ then the a similar result holds with the limiting 
surface energy $g(s)=1\wedge (\ell s)$, so that (\ref{eqfrattura}) 
reduces to Dugdale's fracture model 
(Theorem~\ref{t:Dugdale} in Section \ref{subsecdugdale}).

Secondly we consider a situation in which $f$ diverges with exponent $p>1$ close to $s=1$, so that 
(\ref{eqdivergencefintro}) is replaced by
\begin{equation*}
\lim_{s\to1}(1-s)^p f(s)=\ell\,.
\end{equation*}
Also in this case  the functionals $\Gamma$-converge to a problem of the form 
of (\ref{eqfrattura}), in this case however the fracture energy $g$ turns out
to be proportional to the opening $s$ to the power $2/(p+1)$ at small $s$. Correspondingly the coefficient $\kappa$ of the diffuse part is infinite, so that the limiting problem is framed in the space GSBV, see Theorem  \ref{t:sublin} in
Section \ref{subsectpowerlaw}.

Finally we show that if $f_k(s)$ diverges as $\ell_k/(1-s)$, with $\ell_k\to\infty$, then  Griffith's fracture model is recovered in the limit, see Theorem \ref{t:MS} in Section \ref{subsecgriffith} below.

We finally resume the structure of the paper. In Section~\ref{s:nota} we introduce some notations,
some preliminaries, and the functional setting of the problem. 
The main result of the paper is stated in Section~\ref{s:stat}, 
where we also discuss the convergence of related minimum problems and 
minimizers.
Our $\Gamma$-convergence result relies on several properties of the 
surface energy density $g$ that are established in Section~\ref{s:gprop}.
The proof is then given first in the one-dimensional case in 
Section~\ref{s:onedim} and then in $n$ dimensions in Section \ref{s:ndim}.
The three generalizations are discussed and proven in  Section~\ref{s:further}.

\begin{figure}
\includegraphics[width=5cm]{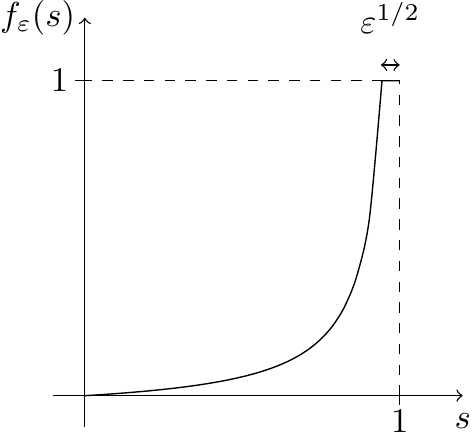}
\caption{Sketch of the function $f_\eps(s)$ for the prototypical case $f(s)=s/(1-s)$.} 
\label{fig1}
\end{figure}

\begin{figure}
\includegraphics[width=5cm]{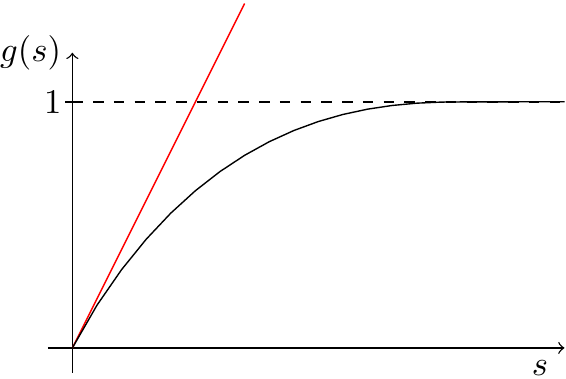}
\caption{Sketch of the function $g(s)$ defined in (\ref{eqintrodefg}), obtained by numerical 
minimization using $f(s)=s/(1-s)$ (cp. Proposition~\ref{11} and Remark~\ref{r:g}).}  
\label{fig2}
\end{figure}

\begin{figure}
\includegraphics[width=5cm]{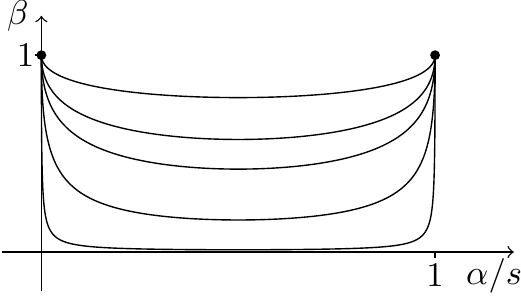}
\caption{Optimal profiles $(\alpha/s,\beta)$ obtained numerically from the minimization in the definition of 
$g(s)$, see  (\ref{eqintrodefg}), for $f(s)=s/(1-s)$ and for $s=0.1, 0.3, 0.5, 1,$ and $1.5$ (from top to bottom). 
All curves remain inside the rectangle $(0,1)\times (0,1)$ except for the two endpoints.} 
\label{fig3}
\end{figure}

\section{Notation and preliminaries}\label{s:nota}

Let $n\geq1$ be a fixed integer. We denote the Lebesgue measure and the $k$-dimensional Hausdorff measure in 
$\mathbb{R}^n$ by $\mathcal{L}^n$ and $\mathcal{H}^k$, respectively. Given $\Omega\subset\Rn$ an open bounded
set with Lipschitz boundary, we define $\mathcal{A}(\Omega)$ as the set of all open subsets of $\Omega$. 

Throughout the paper $c$ denotes a generic positive constant that can vary from line to line. 

\subsection{\texorpdfstring{\boldmath{$\Gamma$}}{Gamma}- and {\texorpdfstring{\boldmath{$\overline{\Gamma}$}}{Gamma bar}-convergence}}

Given an open set $\Omega\subset\Rn$  and a sequence of functionals 
$\mathscr{F}_k:X\times\mathcal{A}(\Omega)\to [0,+\infty]$, $(X,d)$ a separable metric space,  
such that the set function $\mathscr{F}_k(u;\cdot)$ is nondecreasing on the family $\mathcal{A}(\Omega)$ 
of open subsets of $\Omega$, set 
\[
\mathscr{F}'(\cdot; A):=\Gamma\hbox{-}\li \mathscr{F}_k(\cdot; A),\quad
\mathscr{F}''(\cdot; A):=\Gamma\hbox{-}\ls \mathscr{F}_k(\cdot; A)
\]
for every $A\in\mathcal{A}(\Omega)$. 
If $A=\Omega$ we drop the set dependence in the above notation. Moreover we recall that if
$\mathscr{F}=\mathscr{F}'=\mathscr{F}''$ we say that $\mathscr{F}_k$ $\Gamma$-converges to $\mathscr{F}$
(with respect to the metric $d$). 

Next we recall the notion of \emph{$\overline\Gamma$-convergence},  
useful in particular to deal with the integral representation of 
$\Gamma$-limits of families of integral functionals.
We say that $(\mathscr{F}_k)$ $\overline\Gamma$
-\emph{converges} to $\mathscr{F}:X\times \mathcal{A}(\Omega)\to [0,+\infty]$ if $\mathscr{F}$ is the 
inner regular envelope of both functionals $\mathscr{F}'$ and $\mathscr{F}''$, i.e.,
\[
\mathscr{F}(u;A)=\sup\{\mathscr{F}'(u;A^\prime):\, A^\prime\in \mathcal{A}(\Omega),\, A^\prime \subset\subset A\}
=\sup \{ \mathscr{F}''(u;A^\prime): A^\prime\in \mathcal{A}(\Omega),\,A^\prime\subset\subset A\},
\]
for every $(u,A)\in X\times \mathcal{A}(\Omega)$.

\subsection{Functional setting of the problem}

All the results we shall prove in what follows will be set in the spaces $BV$ and $SBV$ 
and in suitable generalizations. For the definitions, the notations and the main properties 
of such spaces we refer to the book \cite{ambrosio}. Below we just recall the definition of 
$SBV^2(\Omega)$ that we shall often use in the sequel:
\[
SBV^2(\Omega):=
\big\{u\in SBV(\Omega):\nabla u\in L^2(\Omega)\text{ and }\mathcal{H}^{n-1}(J_u)<+\infty\big\}.
\]
Moreover, a function $u:\Omega\to\R$ belongs to $GBV(\Omega)$ (respectively to $GSBV(\Omega)$) 
if the truncations $u^M:=-M\vee(u\wedge M)$ belong to $BV_{\textsl{loc}}(\Omega)$ (respectively 
to $SBV_{\textsl{loc}}(\Omega)$), for every $M>0$. 
For fine properties of $GBV$ and $GSBV$ again we refer to \cite{ambrosio}. 

The prototype of the asymptotic result we shall prove in Sections~\ref{s:onedim}, \ref{s:ndim},
and \ref{s:further} concerns the Mumford-Shah functional of image segmentation
\be{\label{e:MS}
\MS(u):=\begin{cases}
\displaystyle{\int_{\Omega}|\nabla u|^2dx+\Hn(J_u)} & \textrm{if $u\in GSBV(\Omega)$,}\cr
+\infty                            & \textrm{otherwise in $L^1(\Omega)$}.
\end{cases}
}       
Let  $\psi:[0,1]\to[0,1]$ be any nondecreasing lower-semicontinuous function such that 
$\psi^{-1}(0)=0$ and $\psi(1)=1$. Then the classical approximation by Ambrosio and Tortorelli 
(cp. \cite{amb-tort1, amb-tort2}, and also \cite{focardi}) establishes that the two fields 
functionals $AT_k^\psi:L^1(\Omega)\times L^1(\Omega)\to[0,+\infty]$ 
\begin{equation}\label{e:ATk}
AT_k^\psi(u,v):=\begin{cases}
            \displaystyle{\int_\Omega{\Big(\psi^2(v)|\nabla u|^2+\frac{(1-v)^2}{4\ve_k}
+\ve_k|\nabla v|^2\Big) dx}} & \textrm{if $(u,v)\in \Hr$}\cr
 & \text{and $0\leq v\leq1$ $\qo$,}\\
+\infty &\text{otherwise}
           \end{cases}
\end{equation}
$\Gamma$-converge in $\Lr$ to 
\[
\MSt(u,v):=\begin{cases} 
\MS(u) & \textrm{if $v=1$ $\qo$,}\cr
+\infty & \text{otherwise},
\end{cases} 
\]
that is equivalent to the Mumford-Shah functional $\MS$ for minimization purposes.

We finally introduce the notation related to slicing.
Fixed $\xi\in \Sn:=\{\xi\in\mathbb{R}^n:|\xi|=1\}$, let $\Pi^\xi:=\big\{y\in\mathbb{R}^n:y\cdot\xi=0\big\}$, 
and for every subset $A\subset \Rn$ set 
\bes{
& A_y^\xi:=\big\{t\in\mathbb{R}:y+t\xi\in A\big\}\quad 
\text{ for $y\in \Pi^\xi$},\\
& A^\xi:=\{y\in\Pi^\xi:A^\xi_y\neq\varnothing\}.
}
For $u:\Omega\to\R$ we define the slices 
$u^\xi_y:\Omega_y^\xi\to\mathbb{R}$ by
$
u^\xi_y(t):=u(y+t\xi).
$ 

Observe that if $u_k,u\in L^1(\Omega)$ and $u_k\to u$ in $L^1(\Omega)$, 
then for every $\xi\in \Sn$ there exists a subsequence $(u_{k_j})$ such that
\[
(u_{k_j})^\xi_y\to u^\xi_y\text{ in }L^1(\Omega^\xi_y)\quad
\textrm{for $\mathcal{H}^{n-1}$-a.e.\ $y\in\Omega^\xi$}.
\]
\medskip

\section{The main results: approximation, compactness and convergence of minimizers}
\label{s:stat}
Given a  bounded open set  $\Omega\subset\Rn$ with Lipschitz boundary and   an infinitesimal sequence
 $\ve_k>0$, we  consider the sequence of functionals $F_k\colon \Lr\to [0,+\infty]$ 
\begin{equation}\label{Fk}
F_k(u,v):=\left\{ \begin{array}{ll} 
\ {\displaystyle\int_\Omega{\Big(f_k^2(v)|\nabla u|^2+\frac{(1-v)^2}{4\ve_k}
+\ve_k|\nabla v|^2\Big) dx}} 
& \textrm{if $(u,v)\in \Hr$}\\
& \text{and $0\leq v\leq1$ $\qo$},\\
\\
+ \infty & \text{otherwise},
\end{array} \right.
\end{equation}
where 
\begin{equation}\label{fk}
f_k(s):=1\wedge\ve_k^{1/2} f(s) \,,\hskip1cm f_k(1)=1\,,
\end{equation}
and 
\begin{equation}\label{f0}
\text{$f\in C^0([0,1),[0,+\infty))$ is a nondecreasing function satisfying $f^{-1}(0)=\{0\}$} 
\end{equation}
with
\be{\label{f1}
\displaystyle\lim_{s\to1}(1-s)f(s)=\ell, \quad \ell\in(0,+\infty).
}
In particular, the function $[0,1)\mapsto(1-s)f(s)$ can be continuously extended to $s=1$ with 
value $\ell$. One can consider $f(s):=\frac{s}{1-s}$ as prototype. 

It is also useful to introduce a localized version $F_k(\cdot;A)$ of $F_k$ 
simply obtained by substituting the domain of integration $\Omega$ with any measurable subset $A$
of $\Omega$ itself. In particular, to be consistent with \eqref{Fk}, for $A=\Omega$ we shall
not indicate the dependence on the domain of integration.

Let now $\Phi\colon L^1(\Omega)\to [0,+\infty]$ be defined by

\begin{equation}\label{Phi}
\Phi(u):=\left\{ \begin{array}{ll} 
\ {\displaystyle\int_\Omega h(|\nabla u|)dx+\int_{J_{u}}g(|[u]|)d\Hn+\ell|D^cu|(\Omega)} 
& \textrm{if $u\in GBV(\Omega)$,}\\
+ \infty & \text{otherwise},
\end{array} \right.
\end{equation}
where we recall that $h,g\colon[0,+\infty)\to[0,+\infty)$ are given by
\be{\label{h} h(s):=\left\{ \begin{array}{ll} 
s^2      & \textrm{if $s\leq \ell/2$,}\\
\ell s-{\ell}^2/4 & \textrm{if $s\geq \ell/2$},
\end{array} \right.}
and
\be{\label{g}
g(s):=\inf_{(\alpha,\beta)\in\mathcal{U}_s}
\int_0^1|1-\beta|\sqrt{f^2(\beta)|\alpha'|^2+|\beta'|^2}\,dt,
}
where $\mathcal{U}_s:=\mathcal{U}_s(0,1)$ and for all $T>0$
\begin{equation}\label{e:admfnctns}
 \mathcal{U}_s(0,T):=\{\alpha,\beta\in H^1\big((0,T)\big):\, 0\le \beta\le 1, \, \alpha(0)=0,\, \alpha(T)=s,\, \beta(0)=\beta(T)=1\}.
\end{equation}
At the points $t$ with $\beta(t)=1$  the integrand in (\ref{g}) reduces to $\ell |\alpha'|(t)$, in agreement with (\ref{f1}).

Our main result is the following.
\begin{theorem}\label{t:gamma-lim}
Under the assumptions above, the functionals $F_k$ $\Gamma$-converge in $\Lr$ to the functional $F$ defined by
\begin{equation}\label{F}
F(u,v):=\begin{cases} 
 \Phi(u) & \textrm{if $v=1$ $\qo$,}\cr
+ \infty & \text{otherwise}.
\end{cases} 
\end{equation}
\end{theorem}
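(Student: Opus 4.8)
The plan is to prove the two $\Gamma$-convergence inequalities separately, reducing each to the one-dimensional case treated in Section~\ref{s:onedim}. As a preliminary remark, observe that the penalization $\frac{(1-v)^2}{4\ve_k}$ forces any sequence $(u_k,v_k)$ with $\sup_k F_k(u_k,v_k)<\infty$ to satisfy $v_k\to1$ in $L^1(\Omega)$: indeed, if $(u_k,v_k)\to(u,v)$ in $\Lr$ with $v\neq1$ on a set of positive measure, then $\int_\Omega(1-v_k)^2\,dx\to\int_\Omega(1-v)^2\,dx>0$ and $\int_\Omega\frac{(1-v_k)^2}{4\ve_k}\,dx\to+\infty$. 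Hence the $\Gamma$-limit is finite only on pairs $(u,1)$, and it suffices to identify it there with $\Phi(u)$, and in particular to show that finiteness of the liminf forces $u\in GBV(\Omega)$.

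\emph{Lower bound.} Let $(u_k,v_k)\to(u,1)$ in $\Lr$ with $\li F_k(u_k,v_k)<\infty$. After extracting a subsequence, the nonnegative measures $\mu_k:=\big(f_k^2(v_k)|\nabla u_k|^2+\frac{(1-v_k)^2}{4\ve_k}+\ve_k|\nabla v_k|^2\big)\Ln\res\Omega$ converge weakly-$*$ to a finite measure $\mu$; it is enough to prove $\mu\geq h(|\nabla u|)\Ln+g(|[u]|)\Hn\res J_u+\ell\,|D^cu|$. For each $\xi\in\Sn$ one has $|\nabla u_k|^2\geq|\partial_\xi u_k|^2$ and $|\nabla v_k|^2\geq|\partial_\xi v_k|^2$ pointwise, so by Fubini, for every open $A\subset\Omega$,
\[
F_k(u_k,v_k;A)\;\geq\;\int_{A^\xi}\widehat F_k\big((u_k)^\xi_y,(v_k)^\xi_y;A^\xi_y\big)\,d\Hn(y),
\]
with $\widehat F_k$ the one-dimensional functional. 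Choosing a subsequence along which $(u_k)^\xi_y\to u^\xi_y$ in $L^1(A^\xi_y)$ for $\Hn$-a.e.\ $y$, applying the one-dimensional liminf inequality slicewise, and using Fatou, I would get $\mu(A)\geq\int_{A^\xi}\widehat\Phi(u^\xi_y;A^\xi_y)\,d\Hn(y)$, where $\widehat\Phi(w;\cdot)=\int h(|w'|)+\sum_{J_w}g(|[w]|)+\ell\,|D^cw|$. A blow-up/derivation argument then upgrades this pointwise: at $\Ln$-a.e.\ $x_0$ take $\xi=\nabla u(x_0)/|\nabla u(x_0)|$ to recover the density $h(|\nabla u(x_0)|)$; at $\Hn$-a.e.\ $x_0\in J_u$ take $\xi=\nu_u(x_0)$ to recover $g(|[u](x_0)|)$; and along the Cantor part take $\xi$ in a countable dense set and use that the polar of $D^cu$ can be reached in the limit, to recover the constant $\ell$. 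Summing these mutually singular contributions via the superadditivity of $\mu$ on disjoint open subsets yields the claim; truncation ($u^M\in BV$ thanks to the bound on $\Hn(J_{u^M})$) gives $u\in GBV(\Omega)$.

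\emph{Upper bound.} Here I would use the direct methods of $\Gamma$-convergence on $SBV$. By abstract compactness and localization, a subsequence of $(F_k)$ $\overline\Gamma$-converges, and the integral representation theorems for $\overline\Gamma$-limits of such functionals show that the limit, restricted to $v=1$ and to $u\in SBV^2(\Omega)\cap L^\infty(\Omega)$, has the free-discontinuity form $\int_\Omega\phi(\nabla u)\,dx+\int_{J_u}\psi(|[u]|,\nu_u)\,d\Hn$. The lower bound already proved gives $\phi\geq h$ and $\psi\geq g$; the matching upper bounds come from explicit one-dimensional constructions. Away from $J_u$ one builds $v_k\to1$ realizing the optimal diffuse-damage profile, obtaining $\phi\leq h$; across a flat piece of $J_u$ with opening $s$ one transplants, after rescaling, a near-optimal pair $(\alpha,\beta)\in\mathcal{U}_s$ from the definition \eqref{g} of $g$, obtaining $\psi(s,\nu)\leq g(s)$ — the coupling between the recovery sequences for $u$ and for $v$ being exactly the one dictated by \eqref{g}. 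A density result (piecewise smooth functions with polyhedral jump set are dense in $SBV^2$ in the $\Phi$-energy) then extends $F''\leq\Phi$ to all of $SBV^2(\Omega)$; a relaxation argument — approximating $u\in BV$ by $SBV^2$ functions in which the Cantor part is split into many small jumps, so that $\sum g(|[u_j]|)\approx\ell\sum|[u_j]|\approx\ell\,|D^cu|$ — extends it to $BV(\Omega)$, recovering the term $\ell\,|D^cu|$ consistently with the linear growth of $h$ at infinity; and a truncation argument finally extends it to $GBV(\Omega)$.

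\emph{Main obstacle.} I expect the hard parts to be, on the lower-bound side, the blow-up/derivation step that must simultaneously capture the three mutually singular parts of $Du$ with their sharp densities — in particular producing $h$, rather than a strictly smaller convex function, from the diffuse slices; and, on the upper-bound side, the identification of the abstract surface density $\psi$ with the coupled profile energy $g$, which requires the sharp one-dimensional construction and its correct interaction with the diffuse term, followed by the passage from $SBV^2$ to $BV$ and $GBV$ without losing the constant $\ell$ in front of $|D^cu|$.
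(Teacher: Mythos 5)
Your overall architecture matches the paper's almost exactly: lower bound by slicing to the one-dimensional result (Proposition~\ref{p:liminfunidim}) and then collecting the three mutually singular contributions via superadditivity; upper bound by the abstract $\overline\Gamma$-compactness/integral-representation machinery (the paper uses the weak subadditivity Lemma~\ref{l:wsub}, the Mumford–Shah domination Lemma~\ref{l:boundglimsup}, and \cite{bou-fon-leo-masc} to reduce to the surface-density estimate \eqref{e:minest}), then a one-dimensional recovery construction across the jump, and finally relaxation on $BV$ via \cite{bou-bra-but} plus truncation to $GBV$. Two small remarks on phrasing rather than substance. First, where you say ``take $\xi=\nabla u(x_0)/|\nabla u(x_0)|$'' etc.\ via a blow-up/derivation, the paper does not blow up: it integrates the sliced one-dimensional lower bound to get \eqref{e:35}, which carries the directional factors $|\nabla u\cdot\xi|$, $|\nu_u\cdot\xi|$, $|\gamma_u\cdot\xi|$, and then runs the supremum-over-a-countable-dense-set-of-directions argument (\cite[Lemma~15.2]{braides}); the two mechanisms are equivalent here, but be aware the paper's version avoids derivation theorems entirely. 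Second, and more importantly, your claim that a direct one-dimensional construction already yields the volume density $\phi\leq h$ on $SBV^2$ is not accurate. The recovery sequence with $v_k\to1$ away from the jump gives $\phi(\xi)=|\xi|^2$, not $h(|\xi|)$; there is no pointwise construction on $SBV^2$ producing the linear-at-infinity density $h$. In the paper, the upper bound on $SBV^2$ is $\widehat F(u,1)\le\int_\Omega|\nabla u|^2dx+\int_{J_u}g(|[u]|)d\Hn$, and \emph{both} the replacement of $|\nabla u|^2$ by $h(|\nabla u|)$ \emph{and} the appearance of $\ell|D^cu|$ are products of the single relaxation step \cite[Theorem~3.1 and Propositions~3.3--3.5]{bou-bra-but}, which exploits $g(s)\sim\ell s$ as $s\to0$ — i.e., steep gradients and Cantor parts are both approximated by many small jumps. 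Your proposal does invoke this relaxation for the Cantor part, so the ingredients are present; just reattribute where $h$ comes from.
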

\begin{remark}\label{r:gen}
 The assumption that $f^{-1}(0)=0$ is not restrictive and changes only the detailed properties of
 $g$. Indeed, standing all the other hypotheses, defining $\lambda:=\sup\{s\in[0,1):\,f(s)=0\}\in[0,1)$, 
 we would get that $g(s)\leq (1-\lambda)^2\wedge\ell s$ (cp.~Proposition~\ref{11} below).
 
In addition, the function $(1-v)^2$ in \eqref{Fk} can be replaced by any continuous,  decreasing
function $d(v)$ with $d(1)=0$. In this case $d^{1/2}(s)$ and $d^{1/2}(\beta)$ appear in formulas 
\eqref{f1} and \eqref{g} in place of $1-s$ and $1-\beta$ respectively, and we obtain 
$g(s)\leq 2\int_0^1d^{1/2}(t)dt\wedge\ell s$ (see  Proposition~\ref{11}).
 
Finally the definition of $f_k$ in \eqref{fk} can be given in the following more general form 
$f_k:=\psi_k\wedge\eps^{1/2}f$. Here the truncation of $f$ is performed with any continuous 
nondecreasing function $\psi_k:[0,1]\to[0,1]$ satisfying $\psi_k\geq c>0$, $\lim_k\psi_k(1)=1$, 
and converging uniformly in a neighborhood of $1$.
\end{remark}
We next address the issue of equi-coercivity for the $F_k$'s.
\begin{theorem}\label{t:comp}
Under the assumptions above, if $(u_k,v_k)\in H^1(\Omega){\times}H^1(\Omega)$ is such that
$$\sup_k \left(F_k(u_k,v_k)+||u_k||_{L^1(\Omega)}\right)<+\infty,$$
then there exists a subsequence $(u_j,v_j)$ of $(u_k,v_k)$ and a function $u\in GBV\cap L^1(\Omega)$ 
such that $u_j\to u$ $\qo$ and $v_j\to 1$ in $L^1(\Omega)$.
\end{theorem}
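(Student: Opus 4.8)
The plan is to dispose first of the damage variable, then reduce to uniformly bounded displacements by truncation, and finally establish an $SBV$-type compactness statement for the truncated displacements by splitting off a ``crack core'' of vanishing Lebesgue measure but uniformly bounded perimeter, on which $u_k$ carries no control at all. Throughout I write $\Lambda:=\sup_k F_k(u_k,v_k)$ and $\Lambda_1:=\sup_k\|u_k\|_{L^1(\Omega)}$, both finite by assumption.

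First I would settle the damage variable: from $\int_\Omega(1-v_k)^2\,dx\le4\eps_k\Lambda\to0$ and $0\le v_k\le1$ one gets $v_k\to1$ in $L^2(\Omega)$, hence in $L^1(\Omega)$, already along the full sequence. Next, fixing $M>0$, I would pass to the truncations $u_k^M:=(-M)\vee(u_k\wedge M)\in H^1(\Omega)$, so that $\|u_k^M\|_{L^\infty(\Omega)}\le M$ and $|\nabla u_k^M|\le|\nabla u_k|$ $\Ln$-a.e. The Modica--Mortola part of $F_k$ controls the perimeters of the sublevels of $v_k$: from $\tfrac{(1-v)^2}{4\eps}+\eps|\nabla v|^2\ge(1-v)|\nabla v|=|\nabla\phi(v)|$ with $\phi(t):=t-t^2/2$, together with the coarea formula, one obtains $\int_0^1(1-t)\,\mathrm{Per}(\{v_k\le t\};\Omega)\,dt\le\Lambda$; hence for every $k$ there is $\delta_k\in[1/4,1/2]$ with $\{v_k\le\delta_k\}$ of finite perimeter and $\mathrm{Per}(\{v_k\le\delta_k\};\Omega)\le c\Lambda$. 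Setting $C_k:=\{v_k\le\delta_k\}$, one then has $\mathrm{Per}(C_k;\Omega)\le c\Lambda$ while $|C_k|\le|\{v_k\le1/2\}|\le c\,\eps_k\Lambda\to0$.

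The crux is a uniform $L^1$-bound for $\nabla u_k$ off the core $C_k$. On $\{v_k>\delta_k\}$ I would distinguish the set where $\eps_k^{1/2}f(v_k)>1$, on which $f_k(v_k)=1$ and thus $\int|\nabla u_k|^2\le\Lambda$, from the set where $\eps_k^{1/2}f(v_k)\le1$, on which $f_k(v_k)=\eps_k^{1/2}f(v_k)$ and therefore, by the arithmetic--geometric inequality,
\[
f_k^2(v_k)|\nabla u_k|^2+\frac{(1-v_k)^2}{4\eps_k}\ \ge\ (1-v_k)f(v_k)\,|\nabla u_k|\ \ge\ m\,|\nabla u_k|,
\]
where $m:=\inf\{(1-t)f(t):t\in[1/4,1)\}>0$, because $(1-t)f(t)$ is continuous and strictly positive on $[1/4,1)$ and tends to $\ell>0$ as $t\to1$, hence attains a positive minimum on $[1/4,1]$. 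Adding the two contributions and using $|\{v_k>\delta_k\}|\le|\Omega|$ gives $\int_{\Omega\setminus C_k}|\nabla u_k|\le\Lambda/m+\Lambda^{1/2}|\Omega|^{1/2}=:C'$ for every $k$. Only an $L^1$-bound — matching the linear growth of $h$ at infinity — is available here, and this step, resting on the interplay between $f_k\equiv1$ near $\{v=1\}$ and the lower bound $(1-t)f(t)\ge m$ away from $0$, is the main obstacle; in particular the ``moderate'' region of $v_k$ need not have vanishing measure, in sharp contrast with the Ambrosio--Tortorelli case.

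Finally, I would set $\tilde u_k:=u_k^M\,\mathbf 1_{\Omega\setminus C_k}$. Since $u_k^M\in H^1(\Omega)\cap L^\infty(\Omega)$ and $C_k$ has finite perimeter, $\tilde u_k\in BV(\Omega)$ with $\|\tilde u_k\|_{L^\infty(\Omega)}\le M$ and $|D\tilde u_k|(\Omega)\le C'+M\,\mathrm{Per}(C_k;\Omega)\le C'+cM\Lambda$, so $(\tilde u_k)_k$ is bounded in $BV(\Omega)$; as $\Omega$ is bounded Lipschitz, a subsequence converges in $L^1(\Omega)$ to some $w^M\in BV(\Omega)$, and since $\|u_k^M-\tilde u_k\|_{L^1(\Omega)}\le M|C_k|\to0$ the same subsequence of $u_k^M$ converges to $w^M$ in $L^1(\Omega)$. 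A diagonal argument over $M\in\N$ yields one subsequence $(u_j,v_j)$ along which $u_j^M\to w^M$ in $L^1(\Omega)$, and (up to a further, not relabelled, extraction) $\Ln$-a.e., for every $M\in\N$, with $w^M\in BV(\Omega)$. Continuity of truncation on $L^1$ gives $(w^{M'})^M=w^M$ for $M\le M'$ and $\sup_M\|w^M\|_{L^1(\Omega)}\le\Lambda_1$, so $u:=\lim_{M\to\infty}w^M$ exists $\Ln$-a.e.\ and in $L^1(\Omega)$ and satisfies $(-M)\vee(u\wedge M)=w^M\in BV(\Omega)$ for every $M$; hence $u\in GBV(\Omega)\cap L^1(\Omega)$. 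Lastly, for $\Ln$-a.e.\ $x$ one picks an integer $M>|u(x)|$: then $u_j^M(x)\to u^M(x)=u(x)$ forces $|u_j(x)|<M$, hence $u_j(x)=u_j^M(x)$, for $j$ large, so $u_j\to u$ $\Ln$-a.e.; together with $v_j\to1$ in $L^1(\Omega)$, this is the claim.
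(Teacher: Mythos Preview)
Your argument is correct and complete, but it follows a genuinely different route from the paper's proof. The paper proceeds by dimension reduction: it first treats the one-dimensional case (for bounded $u_k$) by recycling the discretisation from the $\Gamma$-liminf proof to locate a finite ``bad'' set $S$ outside of which $h(|u_k'|)$ is controlled, and then lifts this to $n$ dimensions via slicing together with the compactness theorem of Alberti, Bouchitt\'e and Seppecher \cite[Theorem~6.6]{alberti}. Your proof instead works directly in $n$ dimensions: you use the Modica--Mortola term and the coarea formula to select a sublevel set $C_k=\{v_k\le\delta_k\}$ of uniformly bounded perimeter and vanishing Lebesgue measure, obtain the $L^1$-gradient bound on $\Omega\setminus C_k$ from the dichotomy $f_k\equiv1$ versus $f_k=\eps_k^{1/2}f$ and the positive lower bound $m=\inf_{[1/4,1)}(1-t)f(t)>0$, and then multiply by $\chi_{\Omega\setminus C_k}$ to land in a bounded set of $BV$, where ordinary $BV$ compactness applies. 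This is more self-contained --- it avoids both the one-dimensional detour and the external compactness result --- and makes transparent that the only obstruction to compactness is the ``crack core'' where $v_k$ is small, which is handled by a perimeter bound rather than by counting bad intervals on slices. The paper's approach, by contrast, reuses machinery already built for the lower bound and ties the compactness statement more closely to the structure of the $\Gamma$-liminf analysis.
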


We shall prove Theorem~\ref{t:gamma-lim} in Sections~\ref{s:onedim} and \ref{s:ndim}, 
Theorem~\ref{t:comp} shall be established in Section~\ref{s:ndim}.

In the rest of this section instead we address the issue of convergence of minimum problems.
Minimum problems related to the functional $F_k$  could have no solution due to a lack of coercivity.
Therefore we slightly perturb the $f_k$'s to guarantee the existence of a minimum point for each $F_k$. 
This together with Theorems~\ref{t:gamma-lim} and \ref{t:comp} shall in turn imply the convergence of 
minima and minimizers as $k\uparrow\infty$.

Let $\eta_k,\ve_k$ be positive infinitesimal sequences such that $\eta_k=o(\ve_k)$ and let 
$\zeta\in L^q(\Omega)$, 
with $q>1$. Let us consider the sequence of functionals $G_k\colon \Lr\to [0,+\infty]$ defined by
$$
G_k(u,v):=\left\{ \begin{array}{ll} 
\ {\displaystyle\int_\Omega{\Big(\big(f_k^2(v)+\eta_k\big)|\nabla u|^2+\frac{(1-v)^2}{4\ve_k}
+\ve_k|\nabla v|^2+|u-\zeta|^q\Big)dx}} & \textrm{if $(u,v)\in \Hr$}\\
 & \text{and $0\leq v\leq1$ $\qo$},\\
\\
+ \infty & \text{otherwise},
\end{array} \right.
$$
where $f_k$ is as in \eqref{fk}.
Let now $\mathscr{G}\colon L^1(\Omega)\to [0,+\infty]$ be defined by

$$
\mathscr{G}(u):=\left\{ \begin{array}{ll} 
\ {\displaystyle\int_\Omega h(|\nabla u|)dx+\int_{J_{u}}g(|[u]|)d\Hn+\ell|D^cu|(\Omega)+\int_{\Omega}|u-\zeta|^qdx} 
& \textrm{if $u\in GBV(\Omega)$,}\\
+ \infty & \text{otherwise},
\end{array} \right.
$$
where $h,g,$ and $\ell$ are as in \eqref{h}, \eqref{g} and \eqref{f1} respectively.
Then the following corollary holds true.
\begin{corollary}\label{c:min}
For every $k$,
let $(u_k,v_k)\in H^1(\Omega){\times} H^1(\Omega)$ be a minimizer of the problem
\begin{equation}\label{intro43}
\min_{(u,v)\in H^1(\Omega){\times} H^1(\Omega)}G_k(u_k,v_k).
\end{equation}
Then $v_k\to 1$ in $L^1(\Omega)$ and a subsequence of $u_k$ converges in $L^q(\Omega)$
to a minimizer $u$ of the problem
$$
\min_{u\in GBV(\Omega)}\mathscr{G}(u).
$$
Moreover the minimum values of (\ref{intro43}) tend to the minimum value of the limit problem.
\end{corollary}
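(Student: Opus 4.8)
The plan is to derive Corollary~\ref{c:min} from the $\Gamma$-convergence result (Theorem~\ref{t:gamma-lim}), the equi-coercivity result (Theorem~\ref{t:comp}), and the fundamental theorem of $\Gamma$-convergence, after checking that the additional perturbation terms $\eta_k|\nabla u|^2$ and $\int_\Omega|u-\zeta|^q\,dx$ do not affect the limiting behavior. First I would observe that the fidelity term $u\mapsto\int_\Omega|u-\zeta|^q\,dx$ is continuous with respect to $L^q(\Omega)$-convergence and, being a fixed functional independent of $k$, it passes to the $\Gamma$-limit additively: by the standard stability of $\Gamma$-convergence under continuous perturbations, $G_k$ $\Gamma$-converges in $L^q(\Omega)\times L^1(\Omega)$ to $\widetilde{\mathscr{G}}(u,v):=\mathscr{G}(u)$ if $v=1$ $\Ln$-a.e.\ and $+\infty$ otherwise, \emph{provided} one knows that adding $\eta_k|\nabla u|^2$ changes nothing in the limit. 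For the latter, the $\Gamma$-liminf inequality is immediate since $f_k^2(v)+\eta_k\ge f_k^2(v)$, so $G_k\ge F_k+\int_\Omega|u-\zeta|^q\,dx$ pointwise (on the common domain); for the $\Gamma$-limsup, the recovery sequences constructed for Theorem~\ref{t:gamma-lim} can be taken (or modified to be) equi-bounded in the Dirichlet energy away from the jump set on each compact piece, so $\eta_k\int_\Omega|\nabla u_k|^2\,dx\to0$ because $\eta_k\to0$ and $\int_\Omega|\nabla u_k|^2\,dx$, although possibly divergent, grows slower than $1/\eta_k$ — here one uses $\eta_k=o(\ve_k)$ together with the fact that in the AT-type construction $\int_\Omega|\nabla u_k|^2\,dx=O(1/\ve_k)$ on the jump part. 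I expect this bookkeeping to be the main technical point, and it should already be recorded implicitly in the upper-bound construction of Section~\ref{s:ndim}.

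Next I would establish equi-coercivity and existence of minimizers for $G_k$. Existence is standard: for fixed $k$, $G_k$ is coercive on $H^1(\Omega)\times H^1(\Omega)$ because $f_k^2(v)+\eta_k\ge\eta_k>0$ gives a genuine $\eta_k\|\nabla u\|_{L^2}^2$ control, the term $\ve_k\|\nabla v\|_{L^2}^2$ controls $v$, the penalization $\|u-\zeta\|_{L^q}^q$ controls $\|u\|_{L^q}$, hence $\|u\|_{L^1}$; weak $H^1\times H^1$ lower semicontinuity of $G_k$ (the volume integrand is convex in the gradients, $f_k^2(v)$ is a nonnegative continuous coefficient, and $v_k\rightharpoonup v$ weakly in $H^1$ implies $v_k\to v$ strongly in $L^2$ so that $f_k^2(v_k)|\nabla u_k|^2$ is handled by a standard Ioffe-type lower semicontinuity argument) then yields a minimizer $(u_k,v_k)$ by the direct method. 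For the passage to the limit, since the competitor $u\equiv\zeta$ (with $v\equiv1$, which is admissible because $0\le1\le1$) gives $G_k(\zeta,1)=\int_\Omega f_k^2(1)|\nabla\zeta|^2\,dx+\eta_k\int_\Omega|\nabla\zeta|^2\,dx$ — but $\zeta\in L^q$ need not lie in $H^1$, so instead I would test with a smooth approximation $\zeta_\delta\in H^1(\Omega)$ with $\|\zeta_\delta-\zeta\|_{L^q}\le\delta$, obtaining $\sup_k G_k(u_k,v_k)\le\sup_k G_k(\zeta_\delta,1)\le C_\delta<\infty$. Hence $\sup_k(F_k(u_k,v_k)+\|u_k\|_{L^1})<\infty$ (the $L^1$ bound following from the $L^q$ bound via the fidelity term and boundedness of $\Omega$), so Theorem~\ref{t:comp} applies: up to a subsequence $u_k\to u$ $\Ln$-a.e.\ with $u\in GBV\cap L^1(\Omega)$ and $v_k\to1$ in $L^1(\Omega)$. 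Upgrading $u_k\to u$ to convergence in $L^q(\Omega)$ is obtained from the uniform bound $\sup_k\|u_k\|_{L^q}<\infty$, a.e.\ convergence, and the equi-integrability furnished by $q$-boundedness (Vitali), or alternatively noting $\|u_k-\zeta\|_{L^q}$ is bounded and passing to a further weakly convergent subsequence in $L^q$ whose limit must coincide with the a.e.\ limit $u$; together with the already-controlled boundedness this gives strong $L^q$ convergence along the subsequence (for $q>1$ one may also invoke a standard Brezis--Lieb / uniform convexity argument once one knows the minimum values converge).

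Finally I would invoke the fundamental theorem of $\Gamma$-convergence: since $G_k$ $\Gamma$-converges (in the $L^q\times L^1$ topology, which on the sublevels relevant here is compatible with the $L^1\times L^1$ topology used in Theorem~\ref{t:gamma-lim}) to $\widetilde{\mathscr{G}}$, and the minimizing sequence $(u_k,v_k)$ is precompact in that topology by the previous paragraph, every cluster point $(u,1)$ of $(u_k,v_k)$ is a minimizer of $\widetilde{\mathscr{G}}$, i.e.\ $u$ minimizes $\mathscr{G}$ over $GBV(\Omega)$, and $\min G_k=G_k(u_k,v_k)\to\widetilde{\mathscr{G}}(u,1)=\mathscr{G}(u)=\min\mathscr{G}$. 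This establishes all three assertions of the corollary. The main obstacle, as indicated above, is the verification that the $\eta_k$-term is $\Gamma$-negligible, i.e.\ that the recovery sequences of Theorem~\ref{t:gamma-lim} have Dirichlet energy $o(1/\eta_k)$; this is where the hypothesis $\eta_k=o(\ve_k)$ is used, since the standard phase-field recovery sequence across a jump concentrates a Dirichlet energy of order $1/\ve_k$ in a layer of width $O(\ve_k)$, so multiplying by $\eta_k=o(\ve_k)$ yields a vanishing contribution.
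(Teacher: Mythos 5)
Your proposal follows essentially the same route as the paper's: establish $\Gamma$-convergence of $G_k$ in the appropriate topology (treating the fidelity term as a continuous perturbation and arguing that $\eta_k|\nabla u|^2$ is $\Gamma$-negligible), combine with the equi-coercivity from Theorem~\ref{t:comp}, and invoke the fundamental theorem of $\Gamma$-convergence. Your explicit explanation of why the $\eta_k$-term is harmless (the recovery construction puts Dirichlet energy $O(1/\ve_k)$ in the jump layer, and $\eta_k=o(\ve_k)$) usefully unpacks a step the paper merely asserts.

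One small slip: boundedness of $\|u_k\|_{L^q}$ does \emph{not} give equi-integrability of $|u_k|^q$, so the Vitali argument as you state it does not directly yield strong $L^q$ convergence (it does give equi-integrability of $|u_k|$ and hence $L^1$ convergence via a.e.\ convergence). The clean order is: upgrade a.e.\ convergence to $L^1$ convergence by equi-integrability of $|u_k|$; apply the fundamental theorem of $\Gamma$-convergence in $L^1\times L^1$ to get convergence of minimum values; deduce $\int|u_k-\zeta|^q\,dx\to\int|u-\zeta|^q\,dx$ because all other terms are separately lower semicontinuous; and then conclude strong $L^q$ convergence by Brezis--Lieb (which you do mention as an alternative). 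With that reordering the argument is complete and matches the paper's.
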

\begin{proof}
We shall only sketch the main steps to establish the conclusion, being the arguments quite standard. 

One first proves that in fact the functionals $F_k$ $\Gamma$-converge to $F$ in 
$L^q(\Omega){\times}L^1(\Omega)$, where $F_k$ and $F$ are the
functionals in Theorem~\ref{t:gamma-lim}.
Indeed, when $q>1$ the $\Gamma$-limsup inequality works exactly as in the case $q=1$, 
whereas the $\Gamma$-liminf inequality is an immediate consequence of the comparison 
with the case $q=1$ and of \cite[Proposition 6.3]{dalmaso}. Let us observe that the presence of $\eta_k$ 
in the functional $F_k$ does not modify the $\Gamma$-convergence result and that the proofs still hold analogously.

As a consequence $G_k$ $\Gamma$-converges to $G$ in  $\Lr$ for every $q\geq 1$, 
where $G(u,v):=\mathscr{G}(u)$ if $v=1$ $\qo$ and $\infty$ otherwise.
Indeed, \cite[Proposition 6.3]{dalmaso} yields that the $\Gamma$-limsup
of $G_k$ in $\Lr$ is less than or equal to the one in $L^q(\Omega){\times}L^1(\Omega)$. In addition, 
$\int_{\Omega}|\cdot-\zeta|^qdx$ is continuous in $L^q(\Omega){\times}L^1(\Omega)$, so that the conclusion 
follows from \cite[Propositions 6.17 and 6.21]{dalmaso}.

The previous result combined with a general result of $\Gamma$-convergence technique 
\cite[Corollary 7.20]{dalmaso} concludes the proof of Corollary \ref{c:min} through 
the compactness result Theorem \ref{t:comp}.
\end{proof}

\section{Properties of the surface energy density}\label{s:gprop}
In this section we shall establish several properties enjoyed by the surface energy density $g$ 
defined in \eqref{g}. 

To this aim we shall often exploit that, in computing 
$g(s)$, $s\geq0$, we may assume that the admissible functions $\alpha$ satisfy $0\leq\alpha\leq s$ by a 
truncation argument (whereas $0\leq\beta\leq1$ by definition). Further, given a curve $(\alpha,\beta)\in  \mathcal{U}_s(0,T)$, 
note that the integral appearing in the definition of $g$ is invariant under reparametrizations of $(\alpha,\beta)$. 
\begin{proposition}\label{11}
The function $g$ defined in (\ref{g}) enjoys the following properties:
\begin{enumerate}[label=(\roman*)]
\item\label{e21} $g(0)=0$, and $g$ is subadditive, \textsl{i.e.}, 
$g(s_1+s_2)\leq g(s_1)+g(s_2)$, for every $s_1,s_2\in\R^+$;
\item\label{e19} $g$ is nondecreasing, $0\leq g(s)\leq 1\wedge\ell s$ for all $s\in\R^+$,
and $g$ is Lipschitz continuous with Lipschitz constant $\ell$;
\item\label{e20} 
\be{\label{e23}\lim_{s\uparrow\infty}g(s)=1;}
\item\label{e22} 
\be{\label{e24}\lim_{s\downarrow 0}\frac{g(s)}{s}=\ell.}
\end{enumerate} 
\end{proposition}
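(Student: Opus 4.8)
The plan is to work directly from the variational definition \eqref{g}, exploiting the reparametrization invariance and the truncation reductions $0\le\alpha\le s$, $0\le\beta\le1$ noted just before the statement. For \ref{e21}, the identity $g(0)=0$ is obtained by testing with the constant curve $\alpha\equiv0$, $\beta\equiv1$, whose integrand vanishes since $1-\beta=0$. Subadditivity follows by concatenation: given near-optimal competitors $(\alpha_1,\beta_1)\in\mathcal{U}_{s_1}$ and $(\alpha_2,\beta_2)\in\mathcal{U}_{s_2}$, define on $(0,2)$ the curve equal to $(\alpha_1,\beta_1)$ on $(0,1)$ and to $(s_1+\alpha_2(t-1),\beta_2(t-1))$ on $(1,2)$; this lies in $\mathcal{U}_{s_1+s_2}(0,2)$ (the boundary values of $\beta$ match at the junction, both being $1$, so no Sobolev jump is introduced), and by reparametrization-invariance its energy equals the sum of the two energies. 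Taking infima gives $g(s_1+s_2)\le g(s_1)+g(s_2)$.

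For \ref{e19}, monotonicity follows from subadditivity together with $g\ge0$: $g(s_2)\le g(s_1)+g(s_2-s_1)$ would need $g(s_2-s_1)\ge0$, which holds, but more directly one composes a competitor for $s_1$ with a segment at $\beta=1$ carrying $\alpha$ from $s_1$ to $s_2$; since at $\beta=1$ the integrand is $\ell|\alpha'|$, this adds exactly $\ell(s_2-s_1)$, giving both $g(s_2)\le g(s_1)+\ell(s_2-s_1)$ and monotonicity. Running this argument symmetrically (and using subadditivity the other way) yields the Lipschitz bound with constant $\ell$; note $g(0)=0$ then forces $g(s)\le\ell s$. The bound $g(s)\le1$ comes from a competitor that drives $\beta$ from $1$ down to $0$ and back to $1$ on, say, $(0,2)$ while keeping $\alpha$ affine: since $f(\beta)|\alpha'|$ contributes a term that can be made negligible by slowing the $\alpha$-motion relative to the $\beta$-motion (reparametrize so $|\alpha'|$ is tiny where $f(\beta)$ is large, i.e. near $\beta=1$), the dominant contribution is $\int|1-\beta||\beta'|\,dt$, and choosing $\beta$ monotone down then up with $\int_0^1|1-\beta||\beta'| = \int_0^1(1-t)\,dt\cdot 2 \le 1$ after optimizing the profile. (The clean way: parametrize by $\beta$ itself, so the $\beta$-part of the energy is $\int_0^1 2(1-\beta)\,d\beta = 1$, independent of $s$, and the $\alpha$-part is made $o(1)$.)

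For \ref{e20}, the upper bound $\limsup_s g(s)\le1$ is the previous paragraph; the lower bound $\liminf_s g(s)\ge1$ is the real content. Here I would argue: for any competitor $(\alpha,\beta)$ with $\alpha(1)-\alpha(0)=s$ large, either $\beta$ stays bounded away from $1$ on a set of substantial measure — in which case the $\int|1-\beta||\beta'|$-type terms, or rather a lower bound via $\sqrt{f^2|\alpha'|^2+|\beta'|^2}\ge f(\beta)|\alpha'|$ combined with $f^{-1}(0)=\{0\}$ and monotonicity of $f$, forces the energy to grow with $s$, contradicting the already-known bound $g(s)\le1$ unless... — the sharp statement is that $\beta$ must dip close to $0$ somewhere, and then the cost of the two transitions $1\to0$ and $0\to1$ in the $\beta$-variable is at least $\int_0^1 2(1-t)\,dt=1$ by the same reparametrization-free estimate $\int|1-\beta||\beta'|\,dt\ge 2\int_0^1(1-\tau)\,d\tau$ applied to each monotone branch, giving total $\ge1-o(1)$. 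I expect this to be the main obstacle: making rigorous that large opening $s$ forces $\inf\beta\to0$, for which one quantifies that if $\inf\beta\ge\delta>0$ then $f(\beta)\ge f(\delta)>0$ and the $\alpha$-term alone costs $\ge f(\delta)\min_{[\delta,1]}(1-\beta)\cdot s$... which is not quite right since $1-\beta$ can be small where $\beta$ is near $1$; the correct quantification pairs the weight $1-\beta$ against $f(\beta)$ using \eqref{f1}, namely $(1-\beta)f(\beta)\to\ell$, so $(1-\beta)f(\beta)\ge\ell/2$ for $\beta$ near $1$, while for $\beta$ bounded away from $1$ one has $1-\beta\ge$ const — in all cases $(1-\beta)f(\beta)\ge c(\delta)>0$ on $\{\beta\ge\delta\}$, whence the $\alpha$-contribution is $\ge c(\delta)\int|\alpha'|\ge c(\delta)s\to\infty$, the contradiction. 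Finally \ref{e22}: the bound $g(s)\le\ell s$ from \ref{e19} gives $\limsup_{s\downarrow0}g(s)/s\le\ell$; for the matching lower bound, take any competitor and use $\sqrt{f^2(\beta)|\alpha'|^2+|\beta'|^2}\ge f(\beta)|\alpha'|$ to get energy $\ge\int_0^1(1-\beta)f(\beta)|\alpha'|\,dt\ge(\inf_{[0,1]}(1-\tau)f(\tau))\,s$; this crude bound is not $\ell s$, so instead I would localize: fix $\eta>0$, choose $\delta$ with $(1-\tau)f(\tau)\ge\ell-\eta$ for $\tau\in[\delta,1]$; for $s$ small, an optimal $\beta$ cannot move far from $1$ because doing so costs an $s$-independent amount (by the \ref{e20}-type estimate) while $g(s)\to0$, so $\beta\ge\delta$ along the optimal curve once $s$ is small enough, giving $g(s)\ge(\ell-\eta)s-o(s)$ and hence $\liminf_{s\downarrow0}g(s)/s\ge\ell-\eta$; let $\eta\downarrow0$. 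The one delicate point threading through \ref{e20} and \ref{e22} is the same: converting "energy $\le$ something small" into pointwise control "$\beta$ close to $1$", which I would handle via the explicit one-dimensional estimate $\int|1-\beta|\,|\beta'|\,dt\ge\tfrac12\big((1-\min\beta)^2\big)$ on each monotone excursion.
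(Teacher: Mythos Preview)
Your proof tracks the paper's argument closely for \ref{e21}, \ref{e20}, and \ref{e22}: the concatenation for subadditivity, the dichotomy ``either $\inf\beta\ge\delta$ (forcing energy $\ge c(\delta)s$ via $(1-\beta)f(\beta)\ge c(\delta)$) or $\inf\beta\to0$ (forcing cost $\ge(1-\min\beta)^2$)'' for \ref{e20}, and the uniform convergence $\beta\to1$ along near-minimizers for small $s$ in \ref{e22} are exactly what the paper does. Your competitor for $g(s)\le1$ (move all of $\alpha$ while $\beta=0$, so $f(\beta)=0$ there, and pay only the $\beta$-transition cost $2\int_0^1(1-\tau)\,d\tau=1$) is the paper's construction.

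There is, however, a genuine gap in \ref{e19}: neither of your two arguments for monotonicity actually proves $g(s_1)\le g(s_2)$ when $s_1<s_2$. Subadditivity gives $g(s_2)\le g(s_1)+g(s_2-s_1)$, which is an \emph{upper} bound on $g(s_2)$, not a lower one; and your concatenation at $\beta=1$ gives $g(s_2)\le g(s_1)+\ell(s_2-s_1)$, again the wrong direction for monotonicity (it is the Lipschitz upper bound, not monotonicity). Subadditive nonnegative functions need not be nondecreasing in general, so there is no abstract rescue. The paper's argument is different and direct: given a near-optimal $(\alpha,\beta)\in\mathcal{U}_{s_2}$, the pair $\bigl(\tfrac{s_1}{s_2}\alpha,\beta\bigr)\in\mathcal{U}_{s_1}$ has strictly smaller integrand (since $s_1/s_2<1$ multiplies only the $|\alpha'|^2$ term under the square root), yielding $g(s_1)\le g(s_2)+\eta$. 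Equivalently, you could use the truncation $\alpha\wedge s_1$ mentioned before the proposition. Once monotonicity is in place, your Lipschitz argument (or the paper's: $0\le g(s_2)-g(s_1)\le g(s_2-s_1)\le\ell(s_2-s_1)$ via subadditivity) goes through.
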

\begin{proof}
Proof of \ref{e21}.
 The couple $(\alpha,\beta)=(0,1)$ is admissible for the minimum problem defining $g(0)$, so that $g(0)=0$.

In order to prove that $g$ is subadditive we fix $s_1,s_2\in\R^+$ and we consider the minimum problems for 
$g(s_1)$ and $g(s_2)$, respectively. Let $\eta>0$ and let $(\alpha_1,\beta_1),(\alpha_2,\beta_2)$ be admissible 
couples respectively for $g(s_1)$ and $g(s_2)$ such that for $i=1,2$ 
\be{\label{e27}
\int_0^1|1-\beta_i|\sqrt{f^2(\beta_i)|\alpha_i'|^2+|\beta_i'|^2}dt<g(s_i)+\eta.
} 
Next define $\alpha:=\alpha_1$ in $[0,1]$, $\alpha:=\alpha_2(\cdot-1)+s_1$ in $[1,2]$, $\beta:=\beta_1$ in 
$[0,1]$, and $\beta:=\beta_2(\cdot-1)$ in $[1,2])$. 
An immediate computation and the reparametrization property mentioned above 
entail the subadditivity of $g$ since $\eta$ is arbitrary. 

Proof of \ref{e19}.
In order to prove that $g$ is nondecreasing we fix $s_1,s_2$ with $s_1< s_2$ and $\eta>0$, and we consider 
$(\alpha,\beta)$ satisfying a condition analogous to
\eqref{e27} for $g(s_2)$. 
Then $(\frac{s_1}{s_2}\alpha,\beta)$ is admissible for $g(s_1)$, thus we infer
$$g(s_1)\leq \int_0^1|1-\beta|\sqrt{\Big(\frac{s_1}{s_2}\Big)^2f^2(\beta)|\alpha'|^2+|\beta'|^2}dt< g(s_2)+\eta,$$
since $s_1/{s_2}< 1$. As $\eta\to 0$ we find $g(s_1)\leq g(s_2)$.

Next we prove that $g(s)\leq 1\wedge \ell s$.
Indeed, inequality $g\leq 1$ straightforwardly comes from the fact that for every $s\geq 0$ the following couple 
$(\alpha,\beta)$ is admissible for $g(s)$: 
$\alpha:=0$ in $(0,1/3)$, $\alpha:=s$ in $(2/3,1)$, and linearly linked in $(1/3,2/3)$, and $\beta:=0$ in $(1/3,2/3)$ and linearly
linked to $1$ in $(0,1/3)$ and in $(2/3,1)$.
Moreover, $g(s)\leq \ell s$ for every $s\geq0$ since the couple $(st,1)$ is admissible for $g(s)$.

The Lipschitz continuity of $g$ is an obvious consequence of the facts 
that $g$ is nondecreasing, subadditive and $g(s)\leq \ell s$ for $s\geq0$. 

Proof of \ref{e20}. 
Let $s_k$, $k\in\N$, be a diverging sequence and let 
$(\alpha_k,\beta_k)$ be an admissible couple for $g(s_k)$ such that
\be{\label{e25}\int_0^1|1-\beta_k|\sqrt{f^2(\beta_k)|\alpha'_k|^2+|\beta'_k|^2}dt<g(s_k)+\frac{1}{k}.}
If $\inf_{(0,1)}\beta_k\geq\delta$ for some $\delta>0$ and for every $k$, then there exists a constant $c(\delta)>0$ such that  
$f(\beta_k)(1-\beta_k)>c(\delta)$, since $f(s)(1-s)\to 0$ if and only if $s\to0$.
Therefore by \eqref{e25} one finds
$$c(\delta)s_k\leq g(s_k)+\frac{1}{k},$$
so that $g(s_k)\to+\infty$ as $k\to+\infty$ and this contradicts the fact that $g\leq 1$. Therefore there exists a sequence $x_k\in(0,1)$
such that $\beta_k(x_k)\to0$ up to subsequences. Since we have already shown that $g\leq 1$, we conclude the proof of \eqref{e23} noticing 
that \eqref{e25} yields \be{\label{e26}(1-\beta_k(x_k))^2 \leq\int_0^{x_k}|1-\beta_k||\beta'_k|dt+\int_{x_k}^1|1-\beta_k||\beta'_k|dt\leq g(s_k)+\frac{1}{k}.}

Proof of \ref{e22}. 
Let $s_k$, $k\in\N$, be an infinitesimal sequence and let $(\alpha_k,\beta_k)$ be an admissible 
couple for $g(s_k)$ satisfying \eqref{e25} with $s_k/k$ in place of $1/k$. If there exists $\delta>0$, 
a not relabeled subsequence of $k$, and a sequence $x_k\in[0,1]$ such that $\beta_k(x_k)<1-\delta$, 
then the same computation as in \eqref{e26} leads to 
\[\delta^2\leq g(s_k)+\frac{s_k}{k}.
\]
As $k\to+\infty$ this contradicts the fact that $g(s)\le \ell s$. Therefore, $\beta_k$ converges uniformly to $1$ and fixing $\delta>0$
$$(\ell-\delta)s_k\leq\int_0^1(1-\beta_k)f(\beta_k)|\alpha'_k|dt\leq g(s_k)+\frac{s_k}{k}$$
holds for $k$ large by \eqref{f1}. Formula \eqref{e24} immediately follows dividing both sides of the last inequality by $s_k$, taking
first $k\to+\infty$ and then $\delta\to0$, and using the fact that $g(s)\leq \ell\,s$, for $s\geq0$.   
\end{proof}
\begin{remark}\label{r:g}
We can actually show that $g$ does not coincide with the function $1\wedge\ell\,s$ at least in the model case 
$f(s)=\frac{\ell s}{1-s}$ by slightly refining the construction used in \ref{e19} above. 
With fixed $s>0$, let $\lambda\in[0,1]$ and set $\alpha:=0$ on $[0,1/3]$, $\alpha:=s$ on $[1/3,2/3]$, and
the linear interpolation of such values on $[1/3,2/3]$; moreover, set $\beta_\lambda:=\lambda$ on $[1/3,2/3]$
and the linear interpolation of the values $1$ and $\lambda$ on each interval $[0,1/3]$ and $[2/3,1]$ in order
to match the boundary conditions. Straightforward calculations lead to
\[
g(s)\leq(1-\lambda)^2+(1-\lambda)f(\lambda)\,s.
\]
Thus, minimizing over $\lambda\in[0,1]$ yields in turn
\[
g(s)\leq \ell s-\frac{(\ell s)^2}{4}<1\wedge\ell s\quad\text{ for all $s\in(0,2/\ell)$.}
\]
\end{remark}

In what follows it will be convenient to provide an alternative representation of $g$ by means of 
a cell formula more closely related to the one-dimensional version of the energies $F_k$'s. 

To this aim we introduce the function $\hat{g}\colon [0,+\infty)\to[0,+\infty)$ defined by
\begin{equation}\label{hatg}
\hat{g}(s):=\lim_{T\uparrow\infty}\inf_{(\alpha,\beta)\in\mathcal{U}_s(0,T)}
\int_0^T \left(f^2(\beta)|\alpha'|^2+\frac{|1-\beta|^2}{4}+|\beta'|^2\right)dt,
\end{equation}
the class $\mathcal{U}_s(0,T)$ has been introduced in \eqref{e:admfnctns}.
We note that $\hat{g}$ is well-defined as the minimum problems appearing in its definition are decreasing 
with respect to $T$.
\begin{proposition}\label{p2}
For all $s\in [0,+\infty)$ it holds $g(s)=\hat{g}(s)$. 
\end{proposition}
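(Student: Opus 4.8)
The plan is to prove the two inequalities $g(s)\le\hat g(s)$ and $\hat g(s)\ge g(s)$ separately, the main tool in both directions being the elementary inequality $a^2+b^2\ge 2|ab|$ together with reparametrization invariance of the integral defining $g$.

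\textbf{Step 1: $g(s)\le\hat g(s)$.} Fix $s\ge 0$ and $T>0$, and let $(\alpha,\beta)\in\mathcal U_s(0,T)$. First I would split $(0,T)$ into the (relatively open) set where $\beta<1$ and its complement; on the complement $\beta\equiv 1$ a.e., so $\beta'=0$ a.e.\ there and that part contributes nothing to the integrals, hence I may as well assume $\beta<1$ on $(0,T)$ except possibly at endpoints. Then, using $f^2(\beta)|\alpha'|^2+|\beta'|^2\ge 2f(\beta)|\alpha'||\beta'|$ and $\tfrac{|1-\beta|^2}{4}+\big(f(\beta)|\alpha'|^2\text{-part}\big)$ --- actually the cleaner route is Young's inequality applied to the whole triple: for the functional in \eqref{hatg},
\[
f^2(\beta)|\alpha'|^2+\frac{|1-\beta|^2}{4}+|\beta'|^2\;\ge\;\Big(f^2(\beta)|\alpha'|^2+|\beta'|^2\Big)^{1/2}\,|1-\beta|\Big(\,\cdot\,\Big)?
\]
I need to be careful here; the standard trick is $p^2+q^2\ge 2pq$ with $p=|1-\beta|/2$ absorbed differently. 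The correct estimate is to bound $f^2(\beta)|\alpha'|^2+\tfrac14|1-\beta|^2+|\beta'|^2 \ge 2|1-\beta|\cdot\tfrac12\sqrt{f^2(\beta)|\alpha'|^2+|\beta'|^2}\cdot$? No: rather, writing $A:=\sqrt{f^2(\beta)|\alpha'|^2+|\beta'|^2}$, one has $A^2+\tfrac14|1-\beta|^2\ge A|1-\beta|$ by AM-GM, so the integrand is $\ge |1-\beta|\sqrt{f^2(\beta)|\alpha'|^2+|\beta'|^2}$, which is exactly the integrand defining $g$. Integrating over $(0,T)$ and taking the infimum over $\mathcal U_s(0,T)$ shows each approximating quantity in \eqref{hatg} is $\ge g(s)$ (since $(\alpha,\beta)\in\mathcal U_s(0,T)$ is admissible for $g(s)$ after affine rescaling of the parameter interval to $(0,1)$, using reparametrization invariance); letting $T\uparrow\infty$ gives $\hat g(s)\ge g(s)$.

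\textbf{Step 2: $g(s)\ge\hat g(s)$.} Conversely, fix $\eta>0$ and pick $(\alpha,\beta)\in\mathcal U_s(0,1)$ nearly optimal for $g(s)$, with $0\le\alpha\le s$, $0\le\beta\le 1$. The idea is to reparametrize so that equality holds in the AM-GM step above, i.e.\ so that $\tfrac12|1-\beta|=\sqrt{f^2(\beta)|\alpha'|^2+|\beta'|^2}$ along the new parametrization. Concretely, on the part of $(0,1)$ where $|1-\beta|>0$ one rescales arclength: define a new parameter via $d\tau = \tfrac{2}{|1-\beta|}\sqrt{f^2(\beta)|\alpha'|^2+|\beta'|^2}\,dt$ and let $T$ be the total new length. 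One must check $T<\infty$ (or handle $T=\infty$ by an exhaustion/truncation argument — this is where care is needed, as $|1-\beta|$ may vanish) and that the reparametrized curve $(\tilde\alpha,\tilde\beta)$ lies in $\mathcal U_s(0,T)$. With this choice the integrand of \eqref{hatg} becomes $2\cdot\tfrac12|1-\beta|\sqrt{\cdots}\,$ per unit $\tau$-length, matching the $g$-integrand, so the \eqref{hatg}-quantity at this $T$ equals $\int_0^1|1-\beta|\sqrt{f^2(\beta)|\alpha'|^2+|\beta'|^2}\,dt < g(s)+\eta$. Taking $\eta\to 0$ and noting \eqref{hatg} is monotone in $T$ gives $\hat g(s)\le g(s)$.

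\textbf{Main obstacle.} The delicate point is Step 2: making the reparametrization rigorous near the set $\{\beta=1\}$, where the conformal factor degenerates, and controlling the total length $T$. I expect one handles this by first perturbing $(\alpha,\beta)$ to a nearly optimal competitor with $\beta\le 1-\delta$ away from small neighborhoods of the endpoints (using that only the endpoint values $\beta=1$ are forced), replacing flat pieces $\beta\equiv 1$ by short linear transitions at negligible cost thanks to Proposition~\ref{11}\ref{e19}, and then the reparametrization is over a compact interval with a bounded-below factor, yielding a finite $T$; the $T\uparrow\infty$ limit in \eqref{hatg} then only helps. The rest is routine bookkeeping with the chain rule for $H^1$ reparametrizations and the reparametrization-invariance already noted before Proposition~\ref{11}.
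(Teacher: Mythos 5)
Your overall strategy is the same as the paper's: Step~1 uses the identical AM--GM estimate, and in Step~2 you correctly identify the optimal reparametrization with conformal factor $\frac{2}{|1-\beta|}\sqrt{f^2(\beta)|\alpha'|^2+|\beta'|^2}$ and correctly flag the degeneracy near $\{\beta=1\}$ as the delicate point. However, the fix you sketch has a gap: truncating $\beta$ at level $1-\delta$ is not by itself enough to make the reparametrization bilipschitz, because the factor still vanishes wherever $\alpha'=\beta'=0$, so $\psi$ need not be invertible. The paper handles this by also inserting a small $\eta$ inside the square root, defining $\psi_\eta'=\frac{2}{1-\beta^\eta}\sqrt{\eta+f^2(\beta^\eta)|\alpha'|^2+|(\beta^\eta)'|^2}$ with $\beta^\eta:=\beta\wedge(1-\eta)$, which makes $\psi_\eta$ genuinely bilipschitz and costs only $O(\sqrt\eta)$ after the change of variables.

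There is a second, subtler step you only gesture at. After reparametrizing with $\beta^\eta$ in place of $\beta$, one must pass from the $\beta^\eta$-integral back to the $\beta$-integral. On the set $\{\beta>1-\eta\}$ one has $(\beta^\eta)'=0$, so the integrand there is $(1-\beta^\eta)f(\beta^\eta)|\alpha'|$; to compare it with $(1-\beta)f(\beta)|\alpha'|$ the paper invokes the continuity modulus of $(1-s)f(s)$ near $s=1$ (this is exactly where hypothesis \eqref{f1} enters), picking up an error $\omega(\eta)\int_0^1|\alpha'|\,dt$. This requires first reducing to $W^{1,\infty}$ competitors by density, as the paper does, so that $\int_0^1|\alpha'|\,dt$ is finite and under control. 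Your phrase ``replacing flat pieces $\beta\equiv 1$ by short linear transitions at negligible cost'' does not quite capture this: the cost isn't negligible for free, it is negligible precisely because of the uniform continuity of $(1-s)f(s)$ near $1$, and that needs to be said. With these two ingredients added, your Step~2 becomes the paper's proof.
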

\begin{proof}
  Let $\alpha,\beta \in H^1\big((0,T)\big)$, $T>0$, be admissible functions in the definition of $\hat g(s)$. 
By Cauchy inequality we obtain
\begin{equation*}
\sqrt{f^2(\beta)|\alpha'|^2 + |\beta'|^2}\, |1-\beta|\le f^2(\beta)|\alpha'|^2 + |\beta'|^2+\frac{(1-\beta)^2}{4} 
\end{equation*}
and integrating
\begin{equation*}
\int_0^T |1-\beta| \, \sqrt{f^2(\beta)|\alpha'|^2 + |\beta'|^2}\, dt\le \int_0^T 
 \left(f^2(\beta)|\alpha'|^2+\frac{|1-\beta|^2}{4}+|\beta'|^2\right)dt\,.
\end{equation*}
The first integral is one-homogeneous in the derivatives, therefore we can reparametrize from $(0,T)$ 
to $(0,1)$. Taking the infimum over all such $\alpha$, $\beta$, and $T$ we obtain $g(s)\le \hat g(s)$.   

To prove the converse inequality, we first show that
$\alpha$ and $\beta$ in the infimum problem defining $g$ can be taken in $W^{1,\infty}\big((0,1)\big)$. Let $\eta>0$ small and let $\alpha,\beta\in H^1\big((0,1)\big)$
be competitors for $g(s)$ such that
\be{\label{e:quasimin}\int_{0}^{1} |1-\beta| \, \sqrt{f^2(\beta)|\alpha'|^2 + |\beta'|^2}\, dt< g(s)+\eta.}
By density we find two sequences $\alpha_j,\beta_j\in W^{1,\infty}\big((0,1)\big)$ (actually in $C^\infty([0,1])$) such that
$\alpha_j(0)=0$, $\alpha_j(1)=s$, $\beta_j(0)=\beta_j(1)=1$, $0\leq\beta_j\leq 1$, and converging respectively 
to $\alpha$ and $\beta$ in $H^1\big((0,1)\big)$. Since the function $(1-s)f(s)$ is uniformly continuous and $\beta_j\to\beta$ also uniformly, we deduce that
$$\int_{0}^{1} |1-\beta_j| \, \sqrt{f^2(\beta_j)|\alpha_j'|^2 + |\beta_j'|^2}\, dt< g(s)+\eta$$
for $j$ large, and this concludes the proof of the claim.

Let us prove now that $\hat{g}\leq g$. 
We fix a small parameter $\eta>0$ and
 consider competitors $\alpha,\beta\in W^{1,\infty}\big((0,1)\big)$  
for $g(s)$ satisfying \eqref{e:quasimin}.
We  define, for $t\in [0,1]$,
\begin{equation*}
  \beta^\eta(t):=\beta(t)\wedge (1-\eta)\quad \text{and}\quad \psi_\eta(t):=\int_{0}^t \frac{2}{1-\beta^\eta} 
  \sqrt{\eta+f^2(\beta^\eta)\,  |\alpha'|^2 + |(\beta^\eta)'|^2} dt' \,.
\end{equation*}
The function  $\psi_\eta:[0,1]\to[0,M_\eta:=\psi_\eta(1)]$ is bilipschitz and in particular invertible. 
We define  $\bar\alpha^\eta, \bar\beta^\eta\in W^{1,\infty}\big((0,M_\eta)\big)$ by
\begin{equation*}
  \bar\alpha^\eta := \alpha \circ \psi_\eta^{-1} \text{ and }
  \bar\beta^\eta := \beta^\eta \circ \psi_\eta^{-1}\,.
\end{equation*}
We compute, using the definition and the change of variables $x=\psi_\eta(t)$,
\begin{alignat*}1
  \int_0^{M_\eta} \frac{(1-\bar\beta^\eta)^2}4 dx &= 
  \int_0^{M_\eta} \frac{(1-\beta^\eta(\psi_\eta^{-1}(x)))^2}4 dx = 
  \int_{0}^1 \frac{(1-\beta^\eta(t))^2}4 \psi_\eta'(t) dt \\
&=\int_{0}^1 \frac{1-\beta^\eta}2 \sqrt{\eta+
f^2(\beta^\eta)\,  |\alpha'|^2 + |(\beta^\eta)'|^2} dt \\
&\le \sqrt\eta+ \int_{0}^1 \frac{1-\beta^\eta}2 \sqrt{f^2(\beta^\eta)\,  |\alpha'|^2 + |(\beta^\eta)'|^2} dt\,,
\end{alignat*}
where we inserted $\psi_\eta'$ from the definition of $\psi_\eta$ and used 
$\sqrt{\eta+A}\le \sqrt \eta + \sqrt A$. Analogously,
\begin{alignat*}1
  \int_0^{M_\eta} \left( f^2(\bar\beta^\eta)|(\bar\alpha^\eta)'|^2+|(\bar\beta^\eta)'|^2\right) dx &= 
  \int_{0}^1 \left( f^2(\beta^\eta)|\alpha'|^2+|(\beta^\eta)'|^2\right) \frac{1}{\psi'_\eta} dt \\
&= \int_{0}^1 \left( f^2(\beta^\eta)|\alpha'|^2+|(\beta^\eta)'|^2\right) 
 \frac{1-\beta^\eta}{2 \sqrt{\eta+
f^2(\beta^\eta)\,  |\alpha'|^2 + |(\beta^\eta)'|^2}}dt  \\
&\le \int_{0}^1
 \frac{1-\beta^\eta}{2} \sqrt{f^2(\beta^\eta)\,  |\alpha'|^2 + |(\beta^\eta)'|^2}dt \,.
\end{alignat*}
We extend $\bar\alpha^\eta$ and $\bar\beta^\eta$ to $(-1, M_\eta+1)$ setting 
 $\bar\alpha^\eta:=0$ in $(-1,0)$, $\bar\alpha^\eta:=s$ in $(M_\eta,M_\eta+1)$, and $\bar\beta^\eta$ the linear interpolation between $1-\eta$ and 1 in each of the 
two intervals, so that they obey the required boundary conditions  for $\hat g$ in the larger interval. 
Collecting terms, we obtain
\begin{align}
  \hat g(s)&\le  \int_{-1}^{M_\eta+1} \left(\frac{(1-\bar\beta^\eta)^2}4 
+ f^2(\bar\beta^\eta)|\bar\alpha'_\eta|^2+|\bar(\beta^\eta)'|^2\right) dx\nonumber\\
&\le \sqrt \eta + 3\eta^2 +  \int_{0}^1
(1-\beta^\eta) \sqrt{f^2(\beta^\eta)\,  |\alpha'|^2 + |(\beta^\eta)'|^2}dt \,,\label{al:p1}
\end{align}
where the $3\eta^2$ term comes from an explicit computation on the two boundary intervals.

It remains to replace $\beta^\eta$ by $\beta$ in the last integral. 
We observe that  $(\beta^\eta)'=0$ almost everywhere on the set where $\beta\ne \beta^\eta$ (which coincides with the set $\{\beta>1-\eta\}$). 
 Therefore
\begin{alignat*}1
\int_{\{\beta\ne \beta^\eta\}}
(1-\beta^\eta) \sqrt{f^2(\beta^\eta)\,  |\alpha'|^2 + |(\beta^\eta)'|^2}dt &=
\int_{\{\beta\ne \beta^\eta\}}
(1-\beta^\eta) f(\beta^\eta)\,  |\alpha'| dt \\
&\le \int_{\{\beta\ne \beta^\eta\}}
(1-\beta) f(\beta)\,  |\alpha'| dt 
+ \omega(\eta) \int_0^1 |\alpha'| dt
\end{alignat*}
where $\omega(\eta)$ is the continuity modulus of $(1-s)f(s)$ near $s=1$, and therefore
\begin{alignat*}1
  \hat g(s)
&\le \sqrt \eta + 3\eta^2  
+\omega(\eta)\int_0^1|\alpha'| dt+
 \int_{0}^1
(1-\beta) \sqrt{f^2(\beta)\,  |\alpha'|^2 + |\beta'|^2}dt \,.
\end{alignat*}
Since the last integral is less than $g(s)+\eta$ and $\eta$ can be made arbitrarily small, this concludes the proof.
\end{proof}

For the proof of the lower bound
we also need to introduce the auxiliary functions $g^{(\eta)}\colon [0,+\infty)\to[0,+\infty)$,
for $\eta>0$, defined by
\be{\label{geta}
g^{(\eta)}(s):=\inf_{(\alpha,\beta)\in\mathcal{U}^{(\eta)}_s}
\int_0^1|1-\beta|\sqrt{f^2(\beta)|\alpha'|^2+|\beta'|^2}\,dt,
}
where
$$
 \mathcal{U}^{(\eta)}_s:=\{\alpha,\beta\in H^1\big((0,1)\big):\, \alpha(0)=0,\, \alpha(1)=s,\, \beta(0)=\beta(1)=1-\eta\}.
$$
\begin{proposition}\label{p1}
For all $s\in [0,+\infty)$ it holds 
\[
|g(s)-g^{(\eta)}(s)|\leq\eta^2. 
\]
\end{proposition}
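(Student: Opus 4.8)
The plan is to compare the two infimum problems defining $g(s)$ and $g^{(\eta)}(s)$ by exhibiting nearly optimal competitors for each that differ only in a short boundary adjustment. To show $g^{(\eta)}(s)\le g(s)+\eta^2$, I would take $(\alpha,\beta)\in\mathcal{U}_s$ nearly optimal for $g(s)$, and truncate $\beta$ from above by setting $\tilde\beta:=\beta\wedge(1-\eta)$. Since $|1-\tilde\beta|\le|1-\beta|$ and $|\tilde\beta'|\le|\beta'|$ pointwise (the derivative of $\tilde\beta$ vanishes on $\{\beta>1-\eta\}$), and since $f$ is nondecreasing so $f(\tilde\beta)\le f(\beta)$, the integrand only decreases; but $\tilde\beta$ need not satisfy the correct boundary values $\tilde\beta(0)=\tilde\beta(1)=1-\eta$ — rather it equals $1-\eta$ on a neighborhood of the endpoints only if $\beta>1-\eta$ there, which holds since $\beta(0)=\beta(1)=1$ and $\beta$ is continuous. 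Hence on small intervals $[0,\delta]$ and $[1-\delta,1]$ we have $\tilde\beta\equiv1-\eta$, and $(\alpha,\tilde\beta)$ restricted to $[\delta,1-\delta]$ (reparametrized to $[0,1]$, which leaves the one-homogeneous integral unchanged) is admissible for $g^{(\eta)}(s)$ with energy no larger than that of $(\alpha,\beta)$, giving $g^{(\eta)}(s)\le g(s)+\eta$ for arbitrary $\eta>0$ in that auxiliary sense. To get the sharp bound one keeps the whole curve: $\tilde\beta$ is admissible for $g^{(\eta)}(s)$ only after reparametrization, and its energy is $\le$ that of $(\alpha,\beta)$; taking the infimum yields $g^{(\eta)}(s)\le g(s)$, in fact.

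For the reverse inequality $g(s)\le g^{(\eta)}(s)+\eta^2$, I would take $(\alpha,\beta)\in\mathcal{U}^{(\eta)}_s$ nearly optimal for $g^{(\eta)}(s)$, with $0\le\beta\le1-\eta$ after truncation, and prepend/append two short arcs on which $\beta$ interpolates linearly between $1$ and $1-\eta$ while $\alpha$ stays constant (equal to $0$ at the start, equal to $s$ at the end). The contribution of each such arc to $\int|1-\beta|\sqrt{f^2(\beta)|\alpha'|^2+|\beta'|^2}\,dt$ is, since $\alpha'=0$ there, exactly $\int|1-\beta||\beta'|\,dt=\int_{1-\eta}^1(1-\tau)\,d\tau=\eta^2/2$, so the two arcs add $\eta^2$ total. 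After reparametrizing the concatenated curve onto $(0,1)$ it lies in $\mathcal{U}_s$, so $g(s)\le g^{(\eta)}(s)+\eta^2$. Combining the two directions gives $|g(s)-g^{(\eta)}(s)|\le\eta^2$.

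The only mild subtlety is handling the boundary behavior of $\beta$ in the first direction: one must argue that a nearly optimal $\beta$ for $g(s)$ can be chosen (or the curve reparametrized) so that truncation produces a curve genuinely admissible for $g^{(\eta)}(s)$, rather than one that only approaches the value $1-\eta$ at the endpoints. This is handled exactly as in the proof of Proposition~\ref{p2}: by density one may take $\alpha,\beta\in W^{1,\infty}$, and then either insert short constant-$\beta$ plateaus near the endpoints at arbitrarily small energy cost, or simply observe that the inequality $g^{(\eta)}(s)\le g(s)$ follows from the monotone truncation argument above combined with a boundary reparametrization of the type used for $\hat g$. Everything else is a direct computation; I expect no genuine obstacle, the bound $\eta^2$ being sharp precisely because the prepended/appended arcs cost $\eta^2/2$ each and the truncation in the other direction is free.
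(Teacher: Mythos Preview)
Your second direction, showing $g(s)\le g^{(\eta)}(s)+\eta^2$ by appending two linear arcs with $\alpha$ constant and $\beta$ going from $1-\eta$ to $1$, is correct and is exactly what the paper does.

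The first direction, however, has a genuine error. You claim that after truncating $\tilde\beta:=\beta\wedge(1-\eta)$ one has $|1-\tilde\beta|\le|1-\beta|$. This is false: on the set $\{\beta>1-\eta\}$ you have $\tilde\beta=1-\eta$, so $|1-\tilde\beta|=\eta>1-\beta=|1-\beta|$. Thus the integrand there becomes $\eta\,f(1-\eta)\,|\alpha'|$, which need \emph{not} be smaller than the original $(1-\beta)\sqrt{f^2(\beta)|\alpha'|^2+|\beta'|^2}$. In fact the comparison reduces to comparing $\eta f(1-\eta)$ with $(1-\beta)f(\beta)$ for $\beta\in(1-\eta,1)$, and since $(1-s)f(s)$ merely tends to $\ell$ without any monotonicity assumption, either can be larger. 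The best one can extract from truncation is an error of order $\omega(\eta)\int|\alpha'|\,dt$ (as in the proof of Proposition~\ref{p2}), not the sharp $\eta^2$, and in particular the claimed inequality $g^{(\eta)}(s)\le g(s)$ is unproven.

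The fix is trivial and is what the paper does: treat both directions symmetrically. Given a near-optimal $(\alpha,\beta)\in\mathcal{U}_s$, extend it to a larger interval by keeping $\alpha$ constant and letting $\beta$ go linearly from $1$ down to $1-\eta$ on each added piece; the extended curve is admissible for $g^{(\eta)}$ (after reparametrization, which is free), and each arc again costs exactly $\int_{1-\eta}^1(1-\tau)\,d\tau=\eta^2/2$, giving $g^{(\eta)}(s)\le g(s)+\eta^2$. No truncation is needed.
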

\begin{proof}
We consider the minimum problems for $g$ and $g^{(\eta)}$ respectively in the intervals $(-1,2)$ and $(0,1)$.
Let $(\alpha_{\eta},\beta_{\eta})$ be an admissible couple for $g^{(\eta)}(s)$ and let $\alpha:=0$ in $(-1,0)$, 
$\alpha:=\alpha_{\eta}$  in $(0,1)$, and $\alpha:=s$ in $(1,2)$;
we also set $\beta:=\beta_{\eta}$ in $(0,1)$ and linearly linked to $1$ in $(-1,0)$ and in $(1,2)$. 
Then an easy computation shows that
$$g(s)\leq \int_0^1|1-\beta_{\eta}|\sqrt{f^2(\beta_{\eta})|\alpha'_{\eta}|^2+|\beta'_{\eta}|^2}dt+\eta^2.$$
By taking the infimum on $(\alpha_{\eta},\beta_{\eta})$ we infer that 
$$g(s)\leq g^{(\eta)}(s)+\eta^2.$$
Reversing the roles of $g$ and $g^{(\eta)}$ we conclude.
\end{proof}

Finally, we study the dependence of $g$ on the function $f$ in detail. The results in the next proposition
provide a first insight on the class of functions $g$ that arise as surface energy densities in our 
analysis. Moreover, they will be instrumental to get in the limit different energies by slightly changing
the functionals $F_k$'s in \eqref{Fk} (cp. Theorems~\ref{t:Dugdale}, \ref{t:sublin}, and \ref{t:MS} below).
\begin{proposition}\label{p:gl}
Let $(\f{j})$ be a sequence of functions satisfying \eqref{f0} and \eqref{f1}. Denote by $\ell_j$, $g_j$
the value of the limit in \eqref{f1} and the function in \eqref{g} corresponding to $\f{j}$, respectively. 
Then,
 \begin{itemize}
  \item[(i)] if $\ell_j=\ell$ for all $j$, $\f{j}\geq \f{j+1}$, and $\f{j}(s)\downarrow 0$ for all 
  $s\in[0,1)$, then $g_j\geq g_{j+1}$ and $g_j(s)\downarrow 0$ for all $s\in[0,+\infty)$;
  
  \item[(ii)] if $\ell_j=\ell$ for all $j$, $\f{j}\leq \f{j+1}$, and $\f{j}(s)\uparrow \infty$ for all 
  $s\in(0,1)$, then $g_j\leq g_{j+1}$ and $g_j(s)\uparrow 1\wedge\ell s$ for all $s\in[0,+\infty)$;
  
  \item[(iii)] if $\ell_j\uparrow\infty$, $\f{j}\leq \f{j+1}$, and $\f{j}(s)\uparrow \infty$ for all 
  $s\in(0,1)$, then $g_j\leq g_{j+1}$ and $g_j(s)\to \chi_{(0,+\infty)}(s)$ for all $s\in[0,+\infty)$. 
 \end{itemize}
\end{proposition}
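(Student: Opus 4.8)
The plan is to handle the three parts in two groups, after disposing of the (soft) monotonicity of $(g_j)_j$ common to all of them. In case (i), $\f{j}\ge\f{j+1}$ makes the integrand in \eqref{g} built from $\f{j}$ dominate, pointwise, the one built from $\f{j+1}$ (including at points where $\beta=1$, where both reduce to $\ell|\alpha'|$), so taking the infimum over $\mathcal U_s$ yields $g_j\ge g_{j+1}$; in cases (ii)--(iii), $\f{j}\le\f{j+1}$ and $\ell_j\le\ell_{j+1}$ give $g_j\le g_{j+1}$ in the same way. Thus $g_j(s)$ is monotone for each $s$ and it suffices to identify its limit; the value $s=0$ is trivial, since $g_j(0)=0$ by Proposition~\ref{11}.

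For part (i), fix $s>0$. For any $\lambda\in[0,1)$, the competitor that interpolates $\beta$ linearly from $1$ down to $\lambda$, then keeps $\beta\equiv\lambda$ while $\alpha$ interpolates linearly from $0$ to $s$, then interpolates $\beta$ back up to $1$ (the construction underlying Remark~\ref{r:g}, which works for an arbitrary $f$) has energy $(1-\lambda)^2+(1-\lambda)\f{j}(\lambda)\,s$, hence $g_j(s)\le(1-\lambda)^2+(1-\lambda)\f{j}(\lambda)\,s$. Since $\f{j}(r)\downarrow0$ for each fixed $r\in[0,1)$, a diagonal choice $\lambda=b_j\uparrow1$ with $\f{j}(b_j)\to0$ is available: for each $m$ pick an increasing $J_m$ with $\f{j}(1-1/m)\le1/m$ for $j\ge J_m$ and set $b_j:=1-1/m$ on $J_m\le j<J_{m+1}$; then $(1-b_j)^2+(1-b_j)\f{j}(b_j)s\le(1+s)/m^2\to0$. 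Hence $g_j(s)\to0$, and by the monotonicity $g_j(s)\downarrow0$.

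Parts (ii) and (iii) are the core. The upper bounds are furnished by Proposition~\ref{11}: $g_j(s)\le1\wedge\ell_j s$, which is $1\wedge\ell s$ in case (ii) and $\le1$ (for $j$ large) in case (iii). For the matching lower bound fix $s>0$ and $\rho\in(0,1)$, take almost-minimizers $(\alpha_j,\beta_j)\in\mathcal U_s$ with energy $E_j<g_j(s)+1/j$, and split according to $m_j:=\min_{[0,1]}\beta_j$. If $m_j<\rho$, bounding the integrand below by $(1-\beta_j)|\beta_j'|$ and integrating (the computation carried out in the proof of Proposition~\ref{11}, cf.~\eqref{e26}) gives $E_j\ge(1-m_j)^2\ge(1-\rho)^2$. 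If $m_j\ge\rho$, bounding the integrand below by $(1-\beta_j)\f{j}(\beta_j)|\alpha_j'|$, using $\int_0^1|\alpha_j'|\ge s$ and $\beta_j([0,1])\subset[\rho,1]$, gives
\[
E_j\ \ge\ s\,c_j(\rho),\qquad c_j(\rho):=\min_{r\in[\rho,1]}(1-r)\f{j}(r),
\]
where $(1-r)\f{j}(r)$ denotes its continuous extension with value $\ell_j$ at $r=1$. The decisive step is to estimate $c_j(\rho)$ from below. The maps $r\mapsto(1-r)\f{j}(r)$ are continuous on the compact $[\rho,1]$ and nondecreasing in $j$ (since $\f{j}\le\f{j+1}$ and $\ell_j\le\ell_{j+1}$). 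In case (iii) they tend pointwise to $+\infty$ on all of $[\rho,1]$ — on $[\rho,1)$ because $\f{j}(r)\uparrow\infty$ and $1-r>0$, at $r=1$ because $\ell_j\uparrow\infty$ — so by Dini's theorem (in its version for monotone convergence to $+\infty$ on a compact set) the convergence is uniform, i.e.\ $c_j(\rho)\to\infty$; hence for $j$ large the alternative $m_j\ge\rho$ is ruled out, $E_j\ge(1-\rho)^2$, and letting $j\to\infty$ then $\rho\to0$ gives $\liminf_j g_j(s)\ge1$, matching the upper bound, so $g_j(s)\uparrow1=\chi_{(0,+\infty)}(s)$. In case (ii), where $\ell_j\equiv\ell$, the pointwise limit of $(1-r)\f{j}(r)$ is $+\infty$ on $[\rho,1-\delta]$ for every $\delta>0$ but only $\ell$ at $r=1$; Dini on the compact $[\rho,1-\delta]$ gives $\min_{[\rho,1-\delta]}(1-r)\f{j}(r)\to\infty$, while on $[1-\delta,1]$ the monotonicity $\f{j}\ge\f{1}$ together with \eqref{f1} for $\f{1}$ gives $(1-r)\f{j}(r)\ge(1-r)\f{1}(r)\ge\ell-\eps$ once $\delta=\delta(\eps)$ is taken small enough (and $\delta<1-\rho$). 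Hence $c_j(\rho)\ge\ell-\eps$ for $j$ large, so $E_j\ge\min\{(\ell-\eps)s,(1-\rho)^2\}$ in either alternative; letting $j\to\infty$, then $\eps,\rho\to0$, gives $\liminf_j g_j(s)\ge1\wedge\ell s$, which with the upper bound and monotonicity yields $g_j(s)\uparrow1\wedge\ell s$.

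The step I expect to be the main obstacle is exactly this uniform-in-$j$ control of the coefficient $(1-r)\f{j}(r)$ near $r=1$, and the assembly of the Dini estimate on $[\rho,1-\delta]$ with the \eqref{f1}-estimate on $[1-\delta,1]$: a priori $\f{j}(r)\to\infty$ for each fixed $r\in(0,1)$ while $(1-r)\f{j}(r)$ need not converge uniformly, and one must preclude that an almost-minimizer of $g_j$ exploits a ``valley'' of $(1-r)\f{j}(r)$ sitting at some $r$ close to, but not too close to, $1$. The monotonicity assumptions $\f{j}\le\f{j+1}$ (resp.\ $\f{j}\ge\f{j+1}$) are precisely what make Dini's theorem applicable here and close this gap; without them the stated limits may fail. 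Everything else — the soft monotonicity of $g_j$, the explicit three-piece competitor, and the elementary bound $\int_0^1(1-\beta)|\beta'|\,dt\ge(1-\min\beta)^2$ — is already present in substance in Section~\ref{s:gprop}.
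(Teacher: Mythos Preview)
Your proof is correct and follows essentially the same route as the paper's: the same three-piece competitor for (i), the same dichotomy on $m_j=\min\beta_j$ for (ii)--(iii), and the same key quantity $c_j(\rho)=\min_{r\in[\rho,1]}(1-r)\f{j}(r)$. The only cosmetic difference is that you invoke Dini's theorem to show $c_j(\rho)\to\ell$ (resp.\ $\infty$), while the paper reaches the same conclusion by picking minimizers $s_j\in\mathrm{argmin}_{[\rho,1]}(1-t)\f{j}(t)$ and arguing directly that $s_j\to1$; your splitting of $[\rho,1]$ into $[\rho,1-\delta]$ and $[1-\delta,1]$ in case (ii) is in fact a slightly cleaner packaging of that same idea.
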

\begin{proof}
 To prove item (i) we note that the monotonicity of the sequence $(\f{j})$ and the 
 pointwise convergence to a continuous function on $[0,1)$ yield that the sequence
 $(\f{j})$ actually converges uniformly on compact subsets of $[0,1)$ to $0$. 
 Therefore, for all $\delta\in(0,1)$ we have for some $j_\delta$
 \[
 \max_{[0,1-\delta]}\f{j}\leq\delta\qquad\text{for all $j\geq j_\delta$.}
 \]
 Then, consider $\alpha_j,\beta_j$ defined as follows:  $\alpha_j(t):=3s(t-1/3)$ on 
 $[1/3,2/3]$, $\alpha_j:=0$ on $[0,1/3]$, and $\alpha_j:=s$ on $[2/3,1]$; $\beta_j:=1-\delta$ 
 on $[1/3,2/3]$ and a linear interpolation between the values $1$ and $1-\delta$ on each interval 
 $[0,1/3]$ and $[2/3,1]$. Straightforward calculations give 
 \[
 g_j(s)\leq\delta^2\,s+\delta^2 \qquad\text{for all $j\geq j_\delta$,}
 \]
 from which the conclusion follows by passing to 
 the limit first in $j\uparrow\infty$ and finally letting $\delta\downarrow 0$.
 
 We now turn to item (ii). We first note that $(g_j)$ is nondecreasing and that 
 \begin{equation}\label{e:geasy}
 \lim_jg_j(s)\leq \ell s\wedge 1
 \end{equation}
 in view of item \ref{e19} in Proposition~\ref{11}.
 Next we show the following: for all $\delta>0$ 
 \begin{equation}\label{e:dmlim}
  \lim_j\min_{t\in[\delta,1]}(1-t)\f{j}(t)=\ell.
 \end{equation}
Let $s_j\in\textrm{argmin}_{[\delta,1]}(1-t)\f{j}(t)$, and denote by $j_k$
a subsequence such that 
\[
\lim_k\min_{t\in[\delta,1]}(1-t)\f{j_k}(t)=\liminf_j\min_{t\in[\delta,1]}(1-t)\f{j}(t).
\]
Either $\limsup_ks_{j_k}<1$ or $\limsup_ks_{j_k}=1$. We exclude the former possibility: 
suppose that, up to further subsequences not relabeled, $\lim_ks_{j_k}=s_\infty\in[\delta,1)$, 
then for all $i\in\N$
\[
\liminf_k(1-s_{j_k})\f{j_k}(s_{j_k})=(1-s_\infty)\liminf_k\f{j_k}(s_{j_k})
\geq(1-s_\infty)f^{(i)}(s_\infty), 
\]
that gives a contradiction by letting $i\uparrow\infty$ since  by minimality of $s_j$ 
\be{\label{e:boundell}
(1-s_j)\f{j}(s_j)\leq\ell\qquad\text{for all $j$}.
}
Therefore, $\limsup_ks_{j_k}=1$, and thus we get
\[
\liminf_j(1-s_j)\f{j}(s_j)\geq \liminf_k(1-s_{j_k})\f{1}(s_{j_k})=\ell.
\]
Formula \eqref{e:dmlim} follows straightforwardly by this and \eqref{e:boundell}.

If $s=0$, clearly we conclude as $g_j(0)=0$ for all $j$.
Let then $s\in(0,+\infty)$ and $\alpha_j$, $\beta_j\in H^1\big((0,1)\big)$ be such that 
$\alpha_j(0)=0$, $\alpha_j(1)=s$, $\beta_j(0)=\beta_j(1)=1$ and
\[
g_j(s)+\frac1j\geq
\int_0^1|1-\beta_j|\sqrt{(\f{j})^2(\beta_j)|\alpha'_j|^2+|\beta'_j|^2}dt.
\]
There are now two possibilities: either there exists $\delta>0$ and a subsequence $j_k$ such 
that $\inf_{[0,1]}\beta_{j_k}\geq\delta$, or $\inf_{[0,1]}\beta_j\to0$. In the former case the 
subsequence satisfies
\begin{equation*}
g_{j_k}(s)+\frac1{j_k}\geq\big(\min_{t\in[\delta,1]}(1-t)\f{j_k}(t)\big)\,s.
\end{equation*}
Taking the $\limsup_{k}$ and using (\ref{e:dmlim}) we 
obtain
\begin{equation}\label{eqcaso1}
  \limsup_j g_j(s) \ge \ell s\,.
\end{equation}
In the other case for every $\delta>0$ definitively it holds
\begin{equation}\label{e:altb}
g_{j}(s)+\frac1{j}\geq\int_0^1(1-\beta_{j})|\beta_{j}^\prime|dt\geq (1-\delta)^2.
\end{equation}
Taking again the $\limsup$  we obtain
\begin{equation}\label{eqcaso2}
  \limsup_j g_j(s) \ge (1-\delta)^2\,.
\end{equation}
Since $\delta$ was arbitrary,
from (\ref{eqcaso1}) and (\ref{eqcaso2}) we obtain $\limsup_j g_j(s)\ge 1\wedge \ell s$ and,
recalling (\ref{e:geasy}),  conclude the proof of (ii).

Let us now prove item (iii). 
First we observe that $g_j(s)\leq 1$ for all $j$.
To prove the lower bound, we notice that arguing similarly as in the proof of  \eqref{e:dmlim}
one obtains
\begin{equation}\label{e:dmlim2}
  \lim_j\min_{t\in[\delta,1]}(1-t)\f{j}(t)=\infty \text{ for all $\delta>0$}.
 \end{equation}
For any  $s\in(0,+\infty)$ we choose $(\alpha_j, \beta_j)\in \mathcal{U}_s$ such that
\[
g_j(s)+\frac1j\geq\int_0^1|1-\beta_j|\sqrt{(\f{j})^2(\beta_j)|\alpha'_j|^2+|\beta'_j|^2}dt.
\]
If there is $\delta>0$ such that $\inf \beta_j\ge \delta$ for infinitely many $j$ then for the same
indices
\begin{equation*}
  g_j(s)+\frac1j\ge \min_{t\in[\delta,1]} (1-t) f^{(j)}(t) s \,,
\end{equation*}
which in view of (\ref{e:dmlim2}) and the bound $g_j(s)\le 1$ is impossible.
Therefore  $\inf_{[0,1]}\beta_j\to0$, which in view of (\ref{e:altb}) proves the assertion.
\end{proof}
\begin{remark}
 The monotonicity assumption $\f{j}\leq \f{j+1}$ in items (ii) and (iii) above leads to simple proofs but 
 it is actually not needed. The same convergence results for $(g_j)$ would follow by using the uniform 
 convergence on compact subsets of $[0,1)$ of $(\f{j})$. The latter property is a consequence of the fact 
 that each $\f{j}$ is nondecreasing and that $f\in C^0([0,1))$. 
\end{remark}

\section{Proof in the one-dimensional case}\label{s:onedim}

Let us study first the one-dimensional case $n=1$. As usual, we will prove a $\Gamma$-liminf inequality and a $\Gamma$-limsup inequality.
The following proposition gives the lower estimate.

\begin{proposition}[Lower bound]\label{p:liminfunidim} 
For every $(u,v)\in \Lr$ it holds
$$F(u,v)\leq F'(u,v).$$
\end{proposition}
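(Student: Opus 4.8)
The plan is to establish the $\Gamma$-liminf inequality $F(u,v)\le F'(u,v)$ in dimension $n=1$ by a direct argument, estimating separately the diffuse (absolutely continuous plus Cantor) and jump contributions of a recovery-candidate sequence. I would fix $(u_k,v_k)\to(u,v)$ in $\Lr$ with $\liminf_k F_k(u_k,v_k)<+\infty$ (otherwise there is nothing to prove), pass to a subsequence attaining the liminf, and deduce from the bound on the second term $\int_\Omega\frac{(1-v_k)^2}{4\ve_k}$ that $v_k\to 1$ in $L^1$, so that necessarily $v=1$ a.e.; if $v\ne 1$ the right-hand side is $+\infty$. Then one must show $u\in GBV(\Omega)$ together with $\Phi(u)\le\liminf_k F_k(u_k,v_k)$. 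The core of the matter is the local analysis near points where $v_k$ dips away from $1$: exactly as in the classical Modica–Mortola/Ambrosio–Tortorelli mechanism, on an interval where $v_k$ makes an excursion from a value close to $1$ down to a small value and back, the combination of the $\frac{(1-v_k)^2}{4\ve_k}$ and $\ve_k|v_k'|^2$ terms, via Young's inequality, controls $\int (1-v_k)|v_k'|$, while the first term $f_k^2(v_k)|u_k'|^2$ simultaneously penalizes the variation of $u_k$ across the dip; the joint lower bound for one such excursion is, after reparametrization, precisely the cell energy $\hat g$, hence $g$ by Proposition~\ref{p2}.

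**Key steps.** First, reduce to $v=1$ a.e.\ and extract a subsequence realizing the liminf with all three integrals bounded. Second, use the slicing/one-dimensional structure to identify a controlled number of ``bad'' intervals $I^j_k$ where $v_k$ is small (say $v_k\le 1-\eta$ somewhere inside) separated by a ``good'' set where $v_k$ is close to $1$; a standard argument bounds the number of such intervals (each contributes a fixed amount $\approx\eta^2$ to the Modica–Mortola term). Third, on the good set, since $v_k\to 1$ uniformly there and $f_k(v_k)\ge 1\wedge\ve_k^{1/2}f(v_k)$ with $(1-s)f(s)\to\ell$, the first term behaves like $\int |u_k'|^2$ when $|u_k'|$ is moderate and like $\ell\int|u_k'|$ when large, yielding in the limit $\int_\Omega h(|\nabla u|)\,dx+\ell|D^cu|(\Omega)$ on the diffuse part — this is where the definition of $h$ in \eqref{h} (quadratic then affine with slope $\ell$) comes from, and where one invokes lower semicontinuity of the relevant convex functional together with the decomposition of $Du$. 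Fourth, on each bad interval, after subtracting boundary values $1-\eta$ and rescaling, the energy is bounded below by $g^{(\eta)}(|\text{jump of }u_k\text{ across }I^j_k|)$, and Proposition~\ref{p1} gives $g^{(\eta)}\ge g-\eta^2$; summing over $j$ and using subadditivity of $g$ (Proposition~\ref{11}\ref{e21}) one recovers $\sum_j g(|[u]|_j)=\int_{J_u}g(|[u]|)\,d\Hn$ in the limit, after sending $\eta\downarrow 0$ at the end. Finally, combine the good-set and bad-interval estimates; the fact that the two mechanisms do not interact (one uses the Modica–Mortola terms, the other the first term on the complementary set, or one localizes on disjoint open sets) lets the two lower bounds add up to $\Phi(u)$.

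**Main obstacle.** The delicate point is the simultaneous treatment of the diffuse and jump parts: one must choose, depending on $k$, a threshold separating the region where $u_k$ varies ``smoothly'' (contributing to $\int h(|\nabla u|)+\ell|D^cu|$) from the region of concentrated variation (contributing to $\int_{J_u}g$), and show that the split is asymptotically lossless. Technically this is handled by a careful truncation-in-$v_k$ argument combined with a localization on arbitrary open subsets $A\subset\subset\Omega$ (consistent with the localized functionals $F_k(\cdot;A)$ introduced in the excerpt) and a measure-theoretic superadditivity/De Giorgi–Letta argument to pass from the localized estimates to the global one. A secondary subtlety is ensuring $u\in GBV$ rather than merely $BV_{loc}$: this follows by applying the whole scheme to truncations $u_k^M$, using that $f_k$ and all the energy terms are unaffected by truncation of $u$, and then letting $M\to\infty$.
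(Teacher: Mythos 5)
Your proposal follows the paper's proof quite closely: a one‐dimensional good/bad decomposition according to whether $v_k$ stays close to $1$; a Modica--Mortola counting argument bounding the number of bad intervals; on the good set, the estimate $F_k\ge(1-\omega(\delta))\int h(|u_k'|)\,dx$ via $(1-s)f(s)\to\ell$ and the fact that $h$ is the convex envelope of $t^2\wedge\ell t$, followed by $L^1$-lower semicontinuity of $\Phi$; on each bad component, rescaling and Cauchy's inequality to obtain a $g^{(\eta)}$-bound, then Proposition~\ref{p1} to recover $g$ up to $\eta^2$; and finally the successive limits $k\to\infty$, $\eta\to0$, $\delta\to0$.

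There is, however, a genuine gap in your sketch of the surface estimate: you do not explain how the variation of $u_k$ accumulated across the bad intervals is matched to the jump $[u]$ of the limit. In the paper this is precisely where the argument is delicate. Around each limit dip point $t_i$ one fixes the neighborhood $I^i_\eta=(t_i-\eta,t_i+\eta)$ and picks $x_1,x_2\in I^i_\eta$ at which $v_k\to1$ and $u_k$ converges pointwise to $u$, with $u(x_1),u(x_2)$ within $\mu$ of $\esssup_{I^i_\eta}u$ and $\essinf_{I^i_\eta}u$; then one applies the $g^{(\eta)}$-bound on each bad component $C_k^j\subset(x_1,x_2)$ and the $h$-bound on the good complement, and uses \emph{subadditivity of $g$} to merge all of these pieces into a single $(1-\omega(\delta))\,g\big(|u_k(x_1)-u_k(x_2)|\big)$ up to errors $O(\eta)+O(\eta^2/(\delta^2-\eta^2))$; letting $k\to\infty$, then $\mu\to0$, yields $g(|[u]|(t_i))$. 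Your phrase ``summing over $j$ \dots\ one recovers $\sum_j g(|[u]|_j)$'' instead reads as summing $g$-values over distinct jump points, which skips the passage from the $u_k$-oscillations on the bad components to the actual opening $|[u]|(t_i)$. A secondary imprecision is your use of a single threshold $\eta$: the paper keeps two scales $\eta\ll\delta$, with $\delta$ controlling both the coarse discretization that produces the finite limit set $S$ (following \cite{AlicBrShah}) and the modulus $\omega(\delta)$ in the bulk estimate, while $\eta$ governs the fine isolation of bad components inside each $I^i_\eta$; merging them loses the control $N\le c/(\delta^2-\eta^2)$ on the number of such components and hence the size of the error term.
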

\begin{proof}
The conclusion is equivalent to the following fact:
let $(u_k,v_k)$ be a sequence such that
\begin{equation}\label{ukvk}
(u_k,v_k)\to (u,v) \text{ in } L^1(\Omega){\times} L^1(\Omega),
\end{equation}
\begin{equation}\label{bounded}
\sup_k F_{k}(u_k,v_k)<+\infty,
\end{equation}
then $u\in BV(\Omega)$, $v=1$ $\mathcal{L}^1$-a.e.\ in $\Omega$,
and
\be{\label{1}\Phi(u)\leq
\liminf_{k\to \infty}F_k(u_k,v_k).}
Since the left-hand side of \eqref{1} is $\sigma$-additive and the right-hand side is $\sigma$-superadditive 
with respect to $\Omega$, it is enough to prove the result when $\Omega$ is an interval. 
For the sake of convenience in what follows we assume $\Omega=(0,1)$. 

By \eqref{bounded} one deduces that $v=1$ $\qo$. Up to subsequences one can assume that the lower limit in \eqref{1} is in fact a limit
and that the convergences in \eqref{ukvk} are also $\mathcal{L}^1$-a.e.\ in $\Omega$.

For the first part of the proof we will use a discretization argument, following the lines of \cite{AlicBrShah}.
We fix  $\delta\in(0,1)$ and for any 
 $N\in\N$  divide $\Omega$ into $N$ intervals
$$I^j_N:=\Big(\frac{j-1}{N},\frac{j}{N}\Big),\qquad j=1,\dots,N.$$
Up to subsequences we can assume that $\displaystyle\lim_{k\to+\infty} \inf_{I^j_N} v_k$ exists for every $j=1,\dots,N$.
We define
$$J_N:=\Big\{j\in\{1,\dots,N\}:\lim_{k\to+\infty} \inf_{I^j_N} v_k\le 1-\delta\Big\}.$$
Fixed $j\in J_N$, we denote by $x_k$ and $y$ two points in $I_N^j$ such that $v_k(x_k)<1-\delta/2$ and  $v_k(y)\to 1$. Then by Cauchy's inequality we 
deduce for $k$ large (assuming for instance $x_k\leq y$)
\be{\label{2}\int_{x_k}^{y}\Big(\frac{(1-v_k)^2}{4\ve_k}+\ve_k| v'_k|^2\Big) dx\geq \frac{1}{2}((1-v_k(x_k))^2-(1-v_k(y))^2)\geq \frac{\delta^2}{16}.}
The previous computation entails 
$$\sup_N\Ho(J_N)<+\infty,$$
so that up to subsequences we can assume $J_N=\{j^N_1,\dots,j^N_L\}$, with $L$ independent on $N$,
and that all sequences $j^N_i/N$ converge. We denote by $S$ the set
of limits of these sequences,
$$
S=\{t_1,\dots,t_{L'}\}=\bigl\{\lim_{N\to+\infty}\frac{j^N_i}{N}\,,\hskip2mm i=1,\dots,L\bigr\} \subset\Omega\,.
$$
We claim now that there exists a modulus of continuity $\omega$, i.e., $\omega(\delta)\to0$ as $\delta\to0$, 
depending only on $f$, such that for all $\eta$ sufficiently small and  $k$ sufficiently large 
(depending on $\eta$)  one has
\be{\label{3}
(1-\omega(\delta))\int_{\Omega\setminus S_{\eta}}h(|u'_k|)dx\leq F_k(u_k,v_k,\Omega\setminus S_{\eta}),
}
where $S_{\eta}:=\bigcup_{i=1}^{L'}(t_i-\eta,t_i+\eta)$. 
It suffices to prove  (\ref{3}) in the case that  $\eta$ is so small that 
 the intervals $(t_i-\eta,t_i+\eta)$  are pairwise disjoint.

In order to prove \eqref{3}, we observe that by definition of $f_k$ in \eqref{fk} and by Cauchy's inequality we obtain
\ba{\label{4}
F_k(u_k,v_k;\Omega\setminus S_{\eta})&\geq& \int_{\Omega\setminus S_{\eta}}\Big(f_k^2(v_k)|u'_k|^2+\frac{(1-v_k)^2}{4\ve_k}\Big) dx\nonumber\\
&\geq& \int_{\Omega\setminus S_{\eta}}\Big(|u'_k|^2\wedge
\bigl(\ve_kf^2(v_k)|u'_k|^2+\frac{(1-v_k)^2}{4\ve_k}\bigr)\Big)dx\nonumber\\
&\geq& \int_{\Omega\setminus S_{\eta}}|u'_k|^2\wedge\bigl((1-v_k)f(v_k)|u'_k|\bigr)dx.
}
Let us note that $v_k> 1-\delta$ in $\Omega\setminus S_{\eta}$ for $k$ large. By \eqref{f1} there exists a modulus of continuity $\omega$ such that
\be{\label{5}|(1-s)f(s)-\ell|\leq \ell\omega(\delta),\qquad \textrm{for $s\geq1-\delta$}.} 
Therefore by \eqref{4} and \eqref{5} we obtain
\be{\label{6}
F_k(u_k,v_k;\Omega\setminus S_{\eta})\geq(1-\omega(\delta))\int_{\Omega\setminus S_{\eta}}|u'_k|^2\wedge\ell|u'_k|dx
\geq(1-\omega(\delta))\int_{\Omega\setminus S_{\eta}}h(|u'_k|)dx.
}
The last inequality holds true as $h$ is the convex envelope of $t^2\wedge\ell t$. Formula (\ref{6}) proves the claim in \eqref{3}.

Notice that the boundedness assumption in \eqref{bounded} and formula \eqref{3} imply that 
$$\sup_{k}\int_{\Omega\setminus S_{\eta}}|u'_k|dx<+\infty.$$
Therefore $u\in BV(\Omega\setminus S_{\eta})$, and actually the finiteness of $S$ ensures that $u\in BV(\Omega)$. In addition, the
$L^1$-lower semicontinuity of the functional $\Phi$ defined in \eqref{Phi} yields
\be{\label{e16}(1-\omega(\delta))\Phi(u;\Omega\setminus S_{\eta})\leq\liminf_k F_k(u_k,v_k;\Omega\setminus S_{\eta}).}

We now estimate the energy contribution on $S_{\eta}$. To this aim it is not restrictive to assume that $S\subseteq J_u$.
\medskip

Let us fix $i\in\{1,\dots,L'\}$ and consider $I^i_\eta:=(t_i-\eta,t_i+\eta)$. We claim that
\be{\label{8}(1-\omega(\delta))g(\esssup_{I^i_{\eta}} u-\essinf_{I^i_{\eta}} u)\leq\li F_k(u_k,v_k;I^i_{\eta})+O(\eta).}

Let us introduce a small parameter $\mu>0$ and $x_1,x_2\in I^i_{\eta}$ such that
\ba{ &\displaystyle v_k(x_1)\to 1, \qquad v_k(x_2)\to 1, \nonumber\\ 
\label{13} &\displaystyle u_k(x_1)\to u(x_1), \qquad u_k(x_2)\to u(x_2), \\
\label{7} &\displaystyle u(x_1)>\esssup_{I^i_{\eta}} u-\mu,\qquad u(x_2)<\essinf_{I^i_{\eta}} u+\mu.}
Assuming without loss of generality that $x_1< x_2$, we define $I:=(x_1,x_2)$. 

There are just finitely many connected components of the set
$$\{x\in I: v_k(x)< 1-\eta\}$$
where $v_k$ achieves the value $1-\delta$, as a computation analogous to \eqref{2} easily shows (recall that $\eta\ll\delta$). Precisely one finds up to subsequences
that the number $N$ of these components is $$N\leq\frac{c}{\delta^2-\eta^2},$$
for some constant $c>0$ independent of $N$. 
Let us now estimate the functional $F_k$ over each component $C_k^j$ of this type, $j=1,\dots,N$.
Since $v_k<1-\eta$ in $C_k^j$ one finds for $k$ large that $f_k(v_k)=\ve^{1/2}_kf(v_k)$, so that for $j=1,\dots,N$ it follows
\begin{multline}\label{e15}
F_k(u_k,v_k;C_k^j)\geq \int_{C_k^j}{\Big(\ve_kf^2(v_k)|u'_k|^2+\frac{(1-v_k)^2}{4\ve_k}+\ve_k|v'_k|^2\Big) dx}\\
\geq g^{(\eta)}\left(\left|\int_{C_k^j}u'_k dx\right|\right)\geq g\left(\left|\int_{C_k^j}u'_k dx\right|\right)-\eta^2,
\end{multline}
by Cauchy's inequality and Proposition \ref{p1}.

Outside  the selected components $C_k^j$, $j=1,\dots,N$, one has $v_k\geq 1-\delta$, so that estimate \eqref{6} also holds with $I\setminus \bigcup_{j=1}^N C_k^j$ 
replacing $\Omega\setminus S_{\eta}$. Therefore
\ba{\label{9}
F_k\left(u_k,v_k;I\setminus \bigcup_{j=1}^N C_k^j\right)&\geq&(1-\omega(\delta))\int_{I\setminus \bigcup_{j=1}^N C_k^j}h(|u'_k|)dx\nonumber\\
&\geq& (1-\omega(\delta))\ell\int_{I\setminus \bigcup_{j=1}^N C_k^j}|u'_k|dx-(1-\omega(\delta))\frac{\ell^2}{4}\Le(I\setminus \bigcup_{j=1}^N C_k^j)\nonumber\\
&\geq& (1-\omega(\delta))g\Big(\Big|\int_{I\setminus \bigcup_{j=1}^N C_k^j}u'_k dx\Big|\Big)-\frac{\ell^2}{2}\eta,}
where we have used the definition of $h$ and Proposition \ref{11} \ref{e19}. 

By \eqref{e15}, \eqref{9}, and the subadditivity of $g$ one finds
$$F_k(u_k,v_k;I)+\frac{\ell^2}{2}\eta+\frac{c\,\eta^2}{\delta^2-\eta^2}\geq (1-\omega(\delta))g\left(\left|\int_I u'_k dx\right|\right)
=(1-\omega(\delta))g(|u_k(x_1)-u_k(x_2)|).$$ 

By property \eqref{13} and by the continuity of $g$, as $k\to+\infty$ one deduces 
$$\li F_k(u_k,v_k;I^i_{\eta})+\frac{\ell^2}{2}\eta+\frac{c\eta^2}{\delta^2-\eta^2}\geq (1-\omega(\delta))g(|u(x_1)-u(x_2)|).$$
Finally property \eqref{7} concludes the proof of \eqref{8} as $\mu\to0$.

The thesis follows by summing \eqref{e16} and \eqref{8} for $i=1,\dots,L$ and taking first $\eta\to0$ and finally $\delta\to0$.
\end{proof}

\begin{proposition}[Upper bound]\label{p:limsupunidim}
For all $u\in BV(\Omega)$ there exists $(u_k,v_k)\to(u,1)$ in $\Lr$ such that
$$\limsup_{k\to+\infty}F_k(u_k,v_k)\leq \Phi(u).$$
\end{proposition}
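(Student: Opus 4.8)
The plan is to construct the recovery sequence by reducing, via a density and diagonalization argument, to the case of $u\in SBV^2(\Omega)$ with $J_u$ a finite set, since such functions are dense in energy in $BV(\Omega)$ for the functional $\Phi$ (using the structure of $\Phi$: $h$ has linear growth at infinity, $g$ is bounded and concave-like near the origin with slope $\ell$, and $\ell|D^cu|(\Omega)$ is exactly the relaxation cost, cp.\ \eqref{e24} and Proposition~\ref{11}). Indeed, given $u\in BV(\Omega)$ one can find $u_j\in SBV^2(\Omega)$ with finite jump set such that $u_j\to u$ in $L^1$ and $\Phi(u_j)\to\Phi(u)$; the diffuse part is handled by mollification and the Cantor part is approximated by small jumps whose $g$-cost converges to $\ell|D^cu|(\Omega)$ thanks to \eqref{e24} together with the sublinearity of $g$. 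If for each such $u_j$ we produce a recovery sequence, a standard diagonal argument yields one for $u$.

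So the core is the construction when $\Omega=(0,1)$ (the general one-dimensional case follows by working on each subinterval between consecutive jump points and gluing), $u\in SBV^2$, $J_u=\{z_1,\dots,z_m\}$ with jump heights $s_i:=|[u](z_i)|$. Away from the jump points I would set $v_k\equiv1$ on the bulk (outside shrinking neighborhoods of the $z_i$) and take $u_k:=u$ there; since $v_k=1$, the bulk integrand is $f_k^2(1)|u'|^2=|u'|^2$, which near each point where $|u'|\le\ell/2$ equals $h(|u'|)$, but where $|u'|>\ell/2$ we have $|u'|^2>h(|u'|)$ — so a genuine modification of $u_k$ on the set $\{|u'|>\ell/2\}$ is needed to realize the convexified density $h$. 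This is the classical trick: replace $u$ on $\{|u'|>\ell/2\}$ by a sawtooth profile oscillating between slope values that average to $u'$ and take values $\le\ell/2$ or contribute via many tiny jumps whose $g$-cost $\approx\ell\cdot(\text{total variation})$ matches $\ell s-\ell^2/4$; alternatively one approximates $h$ from above by $v^2|u'|^2$ with $v<1$ on a small set, absorbing the penalty $(1-v)^2/(4\eps_k)$ into a vanishing term. I would use the $AT$-type interpolation: on $\{|u'|>\ell/2\}$ choose $v_k$ slightly below $1$ optimized pointwise, which is cleanest.

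For each jump point $z_i$ I would use Proposition~\ref{p2}: the cell formula $\hat g(s_i)=g(s_i)$ gives, for any $\eta>0$, a length $T_i$ and profiles $(\alpha,\beta)\in\mathcal U_{s_i}(0,T_i)$ with $\int_0^{T_i}(f^2(\beta)|\alpha'|^2+(1-\beta)^2/4+|\beta'|^2)\,dt\le g(s_i)+\eta$. Rescaling the $t$-variable by $\eps_k$ (i.e.\ $x=z_i+\eps_k(t-T_i/2)$) converts this into an integral of exactly the form $\int(\eps_k f^2(v)|u'|^2+(1-v)^2/(4\eps_k)+\eps_k|v'|^2)\,dx$ on an interval of width $\eps_k T_i\to0$; one checks $f_k(v)=\eps_k^{1/2}f(v)$ on this interval provided $\eps_k^{1/2}f(\beta(t))\le1$, which holds for $k$ large since $\beta$ ranges in a compact subset of $[0,1)$ after the cell-formula truncation, or more carefully where $\beta$ is close to $1$ the integrand automatically matches $\ell|\alpha'|$ by \eqref{f1} so the min with $|u'|^2$ is harmless. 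Setting $u_k$ to interpolate $u$'s boundary values across this tiny interval using $\alpha$ (rescaled) and $v_k$ using $\beta$, and matching continuously to the bulk where $v_k=1$, $u_k=u$, gives local energy $\le g(s_i)+\eta+o(1)$. Summing over $i$, adding the bulk contribution $\int h(|u'|)\,dx+o(1)$, and sending $k\to\infty$ then $\eta\to0$ yields $\limsup_k F_k(u_k,v_k)\le \int h(|u'|)+\sum_i g(s_i)=\Phi(u)$, and $u_k\to u$ in $L^1$, $v_k\to1$ in $L^1$ since the modifications live on sets of vanishing measure.

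The main obstacle I expect is the bulk modification realizing the convex density $h$ rather than $|u'|^2$: one must build $(u_k,v_k)$ on the (possibly complicated, merely measurable) set $\{|u'|>\ell/2\}$ so that the three bulk terms together integrate to $\int h(|u'|)\,dx+o(1)$ while keeping $u_k\to u$ in $L^1$ and not creating spurious jump energy. The clean route is to note $h(t)=\inf\{w^2 t^2 + \text{(something)}\}$-type pointwise formula, or directly: for $|u'(x)|>\ell/2$ pick $v_k(x):=\ell/(2|u'(x)|)<1$ so that $f_k^2(v_k)|u'|^2=v_k^2|u'|^2=\ell^2/4$ pointwise when $f_k=$ identity-ish — but this is not quite $h$; the correct and standard choice (as in \cite{amb-tort1,focardi,iur}) is an optimal-profile construction near the "free boundary" $\{|u'|=\ell/2\}$ combined with a measure-theoretic selection, which is routine but technical. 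I would handle it by first proving the upper bound for $u\in W^{1,\infty}$ piecewise affine (where $\{|u'|>\ell/2\}$ is a finite union of intervals and the sawtooth/$AT$-profile construction is explicit), then pass to general $u$ by the density argument above; this isolates the genuinely new content (the jump profiles via Proposition~\ref{p2}) from the classical bulk relaxation.
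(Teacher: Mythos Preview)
Your jump construction via Proposition~\ref{p2} matches the paper's exactly. The difference is in the bulk, and the ``main obstacle'' you identify is self-imposed.

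The paper never tries to realize $h(|u'|)$ directly. For $u\in SBV^2(\Omega)$ (after localizing to a single jump and assuming $u$ constant near it) it simply takes $v_k\equiv1$ away from the jump, so that $f_k^2(v_k)|u'|^2=|u'|^2$, and obtains the \emph{weaker} bound
\[
F''(u,1)\le\int_\Omega|u'|^2\,dx+\int_{J_u}g(|[u]|)\,d\Ho\,.
\]
It then invokes the relaxation result of Bouchitt\'e--Braides--Buttazzo \cite[Propositions~3.3--3.5]{bou-bra-but}: the lower semicontinuous envelope of the functional $\int|u'|^2+\int_{J_u}g(|[u]|)$ (set to $+\infty$ outside $SBV^2$) with respect to $w^*$-$BV$ is exactly $\Phi$. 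Since $F''(\cdot,1)$ is $L^1$-lower semicontinuous, this yields $F''\le\Phi$ on all of $BV$ in one stroke.

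Your route---reduce to piecewise affine with finite jump set, then build sawtooth/$AT$-type profiles on $\{|u'|>\ell/2\}$ to hit $h$---would work, but it reproduces by hand the content of the relaxation theorem you already need for the density step. If instead of asking for $u_j$ with $\Phi(u_j)\to\Phi(u)$ you ask for $u_j\in SBV^2$ with $\int|u_j'|^2+\sum g(|[u_j]|)\to\Phi(u)$ (which is precisely what \cite{bou-bra-but} gives), the bulk recovery becomes trivial and the obstacle disappears.
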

\begin{proof}
Let us consider first the case when $u\in SBV^2(\Omega)$.
By a localization argument it is not restrictive to assume that $J_{u}=\{x_0\}$ and to take $x_0=0$.
We also assume for a while that $u$ is constant in a neighborhood on both sides of $0$. 

With fixed $\eta>0$, we consider $T_\eta>0$ and 
 $\alpha_\eta,\beta_\eta\in H^1\big((0,T_{\eta})\big)$ such that 
$\alpha_\eta(0)=u^-(0),$ $\alpha_\eta(T_{\eta})=u^+(0)$, $0\leq\beta_\eta\leq 1$, $\beta_\eta(0)=\beta_\eta(T_\eta)=1$, and
\be{\label{e17}g\big(|[u](0)|\big)+\eta>\int_0^{T_{\eta}} \left(f^2(\beta_\eta)|\alpha_\eta'|^2+\frac{|1-\beta_\eta|^2}{4}+|\beta_\eta'|^2\right)dt.}
This choice is possible in view of Proposition \ref{p2}, up to a translation of the variable $\alpha_\eta$. 

Let us define $A_k:=(-\frac{\ve_kT_\eta}{2},\frac{\ve_kT_\eta}{2})$ and
\ba
{u_k(x):=\left\{ \begin{array}{ll} 
\ {\displaystyle \alpha_\eta\left(\frac{x}{\ve_k}+\frac{T_\eta}{2}\right)} 
& \textrm{if $x\in A_k$,}\\
u & \text{otherwise},
\end{array} \right.\nonumber\\
v_k(x):=\left\{ \begin{array}{ll} 
\ {\displaystyle \beta_\eta\left(\frac{x}{\ve_k}+\frac{T_\eta}{2}\right)} 
& \textrm{if $x\in A_k$,}\\
1 & \text{otherwise}.
\end{array} \right.
}
An easy computation shows that $(u_k,v_k)\to(u,1)$ in $\Lr$, that $u_k,v_k\in H^1(\Omega)$ for $k$ large, and that for the same $k$
$$F_k(u_k,v_k,\Omega\setminus A_k)\leq\int_{\Omega}|u'|^2dx,$$
being $f_k\leq 1$. Moreover using that $f_k\leq \ve^{1/2}_kf$ and changing the variable $x$ with $y=\frac{x}{\ve_k}+\frac{T_\eta}{2}$ one has
$$F_k(u_k,v_k,A_k)\leq g\big(|[u](0)|\big)+\eta,$$
where we have used \eqref{e17}. Therefore we find
$$F''(u,1)\leq \int_{\Omega}|u'|^2dx+\int_{J_u}(g(|[u]|) +\eta)d\Ho,$$
and then \be{\label{e18} F''(u,1)\leq \Phi(u),}
since $\eta$ is arbitrary. 

Let us remove now the hypothesis that $u$ is constant near $0$. For a function $u\in SBV^2(\Omega)$ with $J_u=\{0\}$, one can consider
the sequence $u_j:=u$ in $\Omega\setminus (-1/j,1/j)$, with $u_j:=u(-1/j)$ in $(-1/j,0)$ and $u_j=u(1/j)$ in $(0,1/j)$. Then $u_j\to u$ in
$L^1(\Omega)$ and $|u'_j|\leq |u'|$ $\mathcal{L}^1$-a.e.\ in $\Omega$, so that by the lower semicontinuity of $F''$ and by the absolute continuity of $u$ on both sides of $0$ we conclude
as $j\to+\infty$ that $u$ still satisfies \eqref{e18}. 

The extension of \eqref{e18} to each $u\in SBV^2(\Omega)$ with
$\Ho(J_u)<+\infty$ is immediate and finally \cite[Propositions 3.3-3.5]{bou-bra-but} conclude the proof. 
\end{proof}

\section[Proof in the \texorpdfstring{$n$}{n}-dimensional case]{Proof in the \texorpdfstring{$n$}{n}-dimensional case}\label{s:ndim}
In this section we establish the $\Gamma$-convergence result in the $n$-dimensional setting.
We recover the lower bound estimate by using a slicing technique thus reducing ourselves
to the one-dimensional setting of Proposition~\ref{p:liminfunidim}. Instead, the upper bound 
inequality follows by an abstract approach based on integral representation results 
(cp. Proposition~\ref{p:limsupndim} below).
\begin{proposition}\label{p:liminfndim}
For every $(u,v)\in \Lr$ it holds
$$F(u,v)\leq F'(u,v).$$
\end{proposition}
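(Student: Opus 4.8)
The plan is to reduce the $n$-dimensional lower bound to the one-dimensional one already proved in Proposition~\ref{p:liminfunidim} via the slicing machinery introduced in Section~\ref{s:nota}. As usual it suffices to show: if $(u_k,v_k)\to(u,v)$ in $\Lr$ with $\sup_k F_k(u_k,v_k)<+\infty$, then $v=1$ $\qo$, $u\in GBV(\Omega)$, and $\Phi(u)\le\liminf_k F_k(u_k,v_k)$. The fact that $v=1$ $\qo$ follows immediately from the bound on $\int_\Omega (1-v_k)^2/(4\eps_k)\,dx$. For the energy inequality I would first reduce to $u$ bounded: truncate $u^M_k:=-M\vee(u_k\wedge M)$ and $u^M:=-M\vee(u\wedge M)$; since $|\nabla u^M_k|\le|\nabla u_k|$ a.e.\ and $f_k^2(v_k)|\nabla u^M_k|^2\le f_k^2(v_k)|\nabla u_k|^2$, one has $F_k(u^M_k,v_k)\le F_k(u_k,v_k)$, and $u^M_k\to u^M$ in $L^1$. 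So it is enough to prove the estimate for $u^M$ with a constant independent of $M$, and then let $M\uparrow\infty$ using that $\Phi(u^M)\uparrow\Phi(u)$ (this last monotone-convergence step is where the $GBV$ framework enters, together with the structure of $h$, $g$, and $D^cu$ under truncation).

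With $u$ bounded, I would pass to a subsequence realizing the liminf and fix a direction $\xi\in\Sn$. By the slicing remark in Section~\ref{s:nota} there is a further subsequence along which $(u_k)^\xi_y\to u^\xi_y$ and $(v_k)^\xi_y\to 1$ in $L^1(\Omega^\xi_y)$ for $\Hn$-a.e.\ $y\in\Omega^\xi$. The one-dimensional energy of the slice is $F_k^{1\mathrm{D}}\big((u_k)^\xi_y,(v_k)^\xi_y;\Omega^\xi_y\big)=\int_{\Omega^\xi_y}\big(f_k^2\big((v_k)^\xi_y\big)|\partial_t(u_k)^\xi_y|^2+\frac{(1-(v_k)^\xi_y)^2}{4\eps_k}+\eps_k|\partial_t(v_k)^\xi_y|^2\big)dt$, and since $|\partial_t(u_k)^\xi_y|=|\nabla u_k\cdot\xi|\le|\nabla u_k|$ and likewise for $v_k$, Fubini gives $\int_{\Omega^\xi}F_k^{1\mathrm{D}}\big((u_k)^\xi_y,(v_k)^\xi_y;\Omega^\xi_y\big)\,d\Hn(y)\le F_k(u_k,v_k)$. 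Applying Proposition~\ref{p:liminfunidim} slicewise together with Fatou's lemma yields $\int_{\Omega^\xi}\Phi^{1\mathrm{D}}\big(u^\xi_y;\Omega^\xi_y\big)\,d\Hn(y)\le\liminf_k F_k(u_k,v_k)$, where $\Phi^{1\mathrm{D}}$ is the one-dimensional functional with densities $h$, $g$, $\ell|D^c\cdot|$; in particular the right-hand side being finite, the slices $u^\xi_y$ lie in $BV(\Omega^\xi_y)$ for a.e.\ $y$ and for every $\xi$, whence $u\in BV(\Omega)$ by the standard slicing characterization.

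The remaining, and main, difficulty is to combine the one-dimensional slice estimates into the full $n$-dimensional inequality $\Phi(u)\le\liminf_k F_k(u_k,v_k)$. The obstruction is that $h$, $g$, and $\ell\,|D^cu|$ are not convex/one-homogeneous in a way that would let one simply integrate slice inequalities; instead I would use the localization method. Introduce the localized functionals $F_k(\cdot,\cdot;A)$ for $A\in\mathcal{A}(\Omega)$, set $\Lambda:=\Gamma\text{-}\liminf_k F_k(u_k,v_k;\cdot)$ viewed as a (superadditive) set function, and use the De Giorgi–Letta criterion to show it is (the trace of) a Radon measure $\lambda$. Then one estimates the Radon–Nikodým derivatives of $\lambda$ with respect to $\Ln$, $\Hn\res J_u$, and $|D^cu|$ separately: at $\Ln$-a.e.\ point one recovers $h(|\nabla u|)$ by blow-up and the slice bound on the absolutely continuous part (using that $h$ is the convex envelope appearing in Proposition~\ref{p:liminfunidim}); at $\Hn$-a.e.\ jump point $x\in J_u$ with normal $\nu$, choosing $\xi=\nu$ and blowing up, the slice estimate gives density $\ge g(|[u](x)|)$, relying on the cell-formula characterization of $g$ (Propositions~\ref{p2}, \ref{p1}) and the subadditivity/continuity of $g$ from Proposition~\ref{11}; and on the Cantor part one gets density $\ge\ell$ by the same one-dimensional mechanism that produces the $\ell|D^cu|$ term in $\Phi^{1\mathrm{D}}$. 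Summing these lower bounds on mutually singular parts and using the localization/supremum over $A$ yields $\Phi(u)\le\lambda(\Omega)\le\liminf_k F_k(u_k,v_k)$, and undoing the truncation finishes the proof. The technically delicate point throughout is the blow-up analysis at jump points, ensuring the competitor slices can be glued with the correct boundary data so that the one-dimensional formula \eqref{g}/\eqref{hatg} is genuinely attained in the limit.
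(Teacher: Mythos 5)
Your first half — reduction to $u\in L^\infty(\Omega)$ by truncation, slicing in directions $\xi\in\Sn$, Fubini plus Fatou, invoking Proposition~\ref{p:liminfunidim} slicewise, and concluding $u\in BV(\Omega)$ — matches the paper's proof step for step (the paper performs the truncation at the end rather than the start, but the mechanism is identical). You diverge at the recombination step. You propose a blow-up argument: build a limit Radon measure and estimate its densities against $\Ln$, $\Hn\res J_u$ and $|D^cu|$, with blow-up at jump points in the normal direction. The paper does something strictly simpler. Integrating the slice estimate~\eqref{e29} over $\Omega^\xi$ gives, for each fixed $\xi$ and each open $A\subset\Omega$,
\begin{equation*}
\int_A h(|\nabla u\cdot\xi|)\,dx+\int_{J_{u}\cap A}|\nu_u\cdot\xi|\,g(|[u]|)\,d\Hn+\ell\int_A |\gamma_u\cdot\xi|\,d|D^cu|\leq F'(u,1;A).
\end{equation*}
Letting $\xi$ range over a countable dense subset of $\Sn$ and applying the standard supremum-of-measures lemma \cite[Lemma~15.2]{braides}, which uses only the superadditivity of $F'(u,1;\cdot)$ on disjoint open sets, yields $\Phi(u)\leq F'(u,1)$ at once, since $\sup_\xi h(|\nabla u\cdot\xi|)=h(|\nabla u|)$ (as $h$ is nondecreasing), $\sup_\xi|\nu_u\cdot\xi|=1$, and $\sup_\xi|\gamma_u\cdot\xi|=1$. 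No blow-up, no Radon--Nikod\'ym density computation, and no gluing of competitors at jump points is needed.

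Your blow-up route also has a concrete flaw as written: the De~Giorgi--Letta criterion cannot be applied to $\Lambda(A):=\liminf_k F_k(u_k,v_k;A)$, nor to the $\Gamma$-liminf $F'(u,1;\cdot)$, because these set functions are superadditive but not in general subadditive on open sets, so the criterion does not produce a Radon measure. The usual repair in the blow-up method is to take the nonnegative measures $\mu_k$ whose density is the integrand of $F_k(u_k,v_k;\cdot)$, extract a weak-$*$ convergent subsequence $\mu_k\overset{*}{\rightharpoonup}\mu$ using the uniform mass bound, and then estimate $d\mu/d\Ln$, $d\mu/d(\Hn\res J_u)$, $d\mu/d|D^cu|$ at generic points; with that fix your route would close, but at substantially higher technical cost than the paper's one-line supremum argument.
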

\begin{proof}
Let us assume first that $u\in L^\infty(\Omega)$. We set $M:=||u||_{L^\infty(\Omega)}$. 
Let $(u_k,v_k)$ be a sequence such that $(u_k,v_k)\to(u,v)$ in $\Lr$ and 
$\sup F_k(u_k,v_k)<+\infty$. Then it is straightforward that $v=1$ $\qo$. 
We are going to show that $u\in BV(\Omega)$ and that 
\be{\label{e34}\Phi(u)\leq \li F_k(u_k,v_k),}
that proves the thesis under the assumption of the boundedness of $u$.

Given $\xi\in\Sn$, we consider a subsequence $(u_r,v_r)$ of $(u_k,v_k)$ satisfying
$$((u_r)_y^\xi,(v_r)_y^\xi)\to (u_y^\xi,1) \text{ in }
L^1(\Omega_y^\xi){\times} L^1(\Omega_y^\xi) \text{ for } \mathcal{H}^{n-1}\text{-a.e.\ } y\in\Pi^\xi$$
and realizing the lower limit in \eqref{e34} as a limit.

By Fubini's theorem and Fatou's lemma one deduces that
\be{\label{e30}
\liminf_{r\to \infty}\int_{\Omega_y^\xi}\bigg(f_r((v_r)_y^\xi)\left|\nabla ((u_r)_y^\xi)\right|^2+
\frac{(1-(v_r)_y^\xi)^2}{4\ve_r}+\ve_r|\nabla ((v_r)^\xi_y)|^2\bigg)dt <+\infty}
holds for $\mathcal{H}^{n-1}\text{-a.e. } y\in  \Omega^{\xi}$

The one-dimensional result Proposition \ref{p:liminfunidim} yields now that $u^\xi_y\in BV(\Omega^\xi_y)$ and that
\bm{\label{e29}\int_{\Omega^\xi_y} h(|\nabla (u^\xi_y)|)dt+\int_{J_{u^\xi_y}}g(|[u^\xi_y]|)d\Ho+\ell|D^cu^\xi_y|(\Omega^\xi_y)\leq\\
\leq\liminf_{r\to \infty}\int_{\Omega_y^\xi}\bigg(f_r((v_r)_y^\xi)\left|\nabla ((u_r)_y^\xi)\right|^2+
\frac{(1-(v_r)_y^\xi)^2}{4\ve_r}+\ve_r|\nabla ((v_r)^\xi_y)|^2\bigg)dt.}

Let us check that \eqref{e29} implies $u\in BV(\Omega)$ by estimating $\int_{\Omega^\xi}|D(u^\xi_y)|(\Omega^\xi_y)d\Hn$. 
We first notice that 
\be{\label{e32}\int_{\Omega^\xi_y} |\nabla (u^\xi_y)|dt\leq \frac{1}{\ell}\int_{\Omega^\xi_y} h(|\nabla (u^\xi_y)|)dt+\frac{\ell}{4}\Le(\Omega^\xi_y),}
being $h(s)\geq \ell s-{\ell}^2/4$. 

Since
$g(s)/s\to\ell$ as $s\to0$, with fixed $\eta>0$ one has 
\be{\label{e33}g(s)>(\ell-\eta)s\qquad \textrm{for }s<\delta,}
for some $\delta$ sufficiently small.

Therefore \eqref{e32}, \eqref{e33}, and the boundedness of $u$ entail

\bm{|D(u^\xi_y)|(\Omega^\xi_y)\leq\frac{1}{\ell}\int_{\Omega^\xi_y} h(|\nabla (u^\xi_y)|)dt+\frac{\ell}{4}\diam{\Omega}+\frac{1}{\ell-\eta}\int_{\{t\in J_{u^\xi_y}:|[u^\xi_y]|<\delta\}}g(|[u^\xi_y]|)d\Ho\\
+\frac{2M}{g(\delta)}\int_{\{t\in J_{u^\xi_y}:|[u^\xi_y]|\geq\delta\}}g(|[u^\xi_y]|)d\Ho
+|D^cu^\xi_y|(\Omega^\xi_y)\nonumber\\
\leq c+c\left(\int_{\Omega^\xi_y} h(|\nabla (u^\xi_y)|)dt+\int_{J_{u^\xi_y}}g(|[u^\xi_y]|)d\Ho+\ell|D^cu^\xi_y|(\Omega^\xi_y)\right),
}
where $\diam\Omega$ denotes the diameter of $\Omega$ and $c:=\max\{\frac{1}{\ell},\frac{\ell}{4}\diam{\Omega},\frac{1}{\ell-\eta},\frac{2M}{g(\delta)}\}$.
Integrating the last inequality on $\Omega^\xi$ one deduces by \eqref{e29} 
$$\int_{\Omega^\xi}|D(u^\xi_y)|(\Omega^\xi_y)d\Hn\leq c\Hn(\Omega^\xi)+c\sup_k F_k(u_k,v_k).$$
Taking $\xi=e_1,\dots,e_n$ one obtains $u\in BV(\Omega)$.
  
Let us prove now formula \eqref{e34} using  localization. The integration on $\Omega^\xi$ of the one-dimensional estimate 
in \eqref{e29} gives 
\be{\label{e35}\int_\Omega h(|\nabla u\cdot\xi|)dx+\int_{J_{u}}|\nu_u\cdot\xi|g(|[u]|)d\Hn+\ell\int_\Omega |\gamma_u\cdot\xi|d|D^cu|\leq  
\li F_k(u_k,v_k;\Omega),
}
where $\gamma_u:=\frac{d D^c u}{d|D^c u|}$ denotes the density of $D^cu$ with respect to $|D^c u|$. 
Let $E\subset\Omega$ be a Borel set such that $D^a u(E)=0$ and $D^s u(\Omega\setminus E)=0$, and let
$$\lambda:=\Ln\lfloor{\Omega\setminus E}+\Hn\lfloor{J_u}+|D^cu|\lfloor{E\setminus J_u}.$$
Let us consider a countable dense set $D\subset\Sn$ and the functions 
$$\psi_\xi:=h(|\nabla u\cdot\xi|)\chi_{\Omega\setminus E}+|\nu_u\cdot\xi|g(|[u]|)\chi_{J_u}
+\ell|\gamma_u\cdot\xi|\chi_{E\setminus J_u},\qquad \xi\in D.$$
Then (\ref{e35}) gives $(\psi_\xi\lambda)(A)\le F'(u,1,A)$ for all open sets $A\subset\Omega$. Since 
$F'(u,1,\cdot)$ is superadditive, this implies
 $((\sup_\xi\psi_\xi)\lambda)(A)\le F'(u,1,A)$  (see  \cite[Lemma 15.2]{braides})
and therefore the conclusion.

In the general case, if $u\in L^1\setminus L^\infty(\Omega)$ one considers $(u_k^M,v_k)$ and $(u^M,v)$, 
where $u^M:=(-M\vee u)\wedge M$ denotes the truncation at level $M\in(0,+\infty)$. Since the functional 
$F_k$ decreases by truncation and $u_k^M\to u^M$ in $L^1(\Omega)$,
we deduce that $u^M\in BV(\Omega)$ and 
\be{\label{e37}\Phi(u^M)\leq\li F_k(u_k^M,v_k)\leq \li F_k(u_k,v_k).} 
Therefore $u\in GBV(\Omega)$ and \eqref{e34} follows easily from \eqref{e37} as $M\to+\infty$. 
\end{proof}

To prove the limsup inequality we follow an abstract approach. We first show that the
$\overline{\Gamma}$-limit is a Borel measure. The
only relevant property to be checked is the weak subadditivity of the $\Gamma$-limsup. 
This is a consequence of De Giorgi's slicing and averaging argument as shown in the following lemma.
\begin{lemma}\label{l:wsub}
Let $(u,v)\in\Lr$, let $A',A,B\in\mathcal{A}(\Omega)$ with $A'\subset\subset A$, then 
\be{\label{e:subadd}
F''(u,1;A'\cup B)\leq F''(u,1;A)+F''(u,1;B). 
}
\end{lemma}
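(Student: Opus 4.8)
The plan is to reduce the weak subadditivity to a cutoff-function argument. Fix $A', A, B \in \mathcal{A}(\Omega)$ with $A' \subset\subset A$, and choose an intermediate open set $A''$ with $A' \subset\subset A'' \subset\subset A$. Let $(u_k, v_k) \to (u,1)$ in $\Lr$ be a recovery sequence for $F''(u,1;A)$, i.e.\ $\ls F_k(u_k,v_k;A) = F''(u,1;A)$, and let $(\tilde u_k, \tilde v_k) \to (u,1)$ be a recovery sequence for $F''(u,1;B)$. Note that the admissible functions satisfy $0\le v_k, \tilde v_k \le 1$ and are $H^1$ on the respective open sets. The idea is to interpolate between $(u_k,v_k)$ on $A'$ and $(\tilde u_k,\tilde v_k)$ on $B\setminus A''$ in the transition shell $A''\setminus A'$, using a cutoff function, and then use an averaging argument over finitely many nested shells to kill the cross term produced by the gradient of the cutoff.

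First I would set up the layered shells: fix $N \in \nat$ and choose open sets $A' = A_0 \subset\subset A_1 \subset\subset \dots \subset\subset A_N = A''$, and smooth cutoffs $\varphi_i \in C_c^\infty(A_i)$ with $\varphi_i \equiv 1$ on $A_{i-1}$, $0 \le \varphi_i \le 1$, and $\|\nabla \varphi_i\|_\infty \le c N / \mathrm{dist}(A', \partial A'')$. For each $i$ define
\be{
w_k^i := \varphi_i u_k + (1-\varphi_i)\tilde u_k, \qquad z_k^i := \varphi_i v_k + (1-\varphi_i)\tilde v_k.
}
Then $w_k^i, z_k^i \in H^1(A'\cup B)$, still satisfy $0 \le z_k^i \le 1$, converge to $(u,1)$ in $\Lr$, and coincide with $(u_k,v_k)$ on $A'$ and with $(\tilde u_k,\tilde v_k)$ on $(A'\cup B)\setminus A_i \supset B\setminus A''$. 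The standard computation splits $F_k(w_k^i, z_k^i; A'\cup B)$ into the contribution on $A_{i-1}$ (bounded by $F_k(u_k,v_k;A)$), the contribution on $(A'\cup B)\setminus A_i$ (bounded by $F_k(\tilde u_k,\tilde v_k;B)$), and an error term on the shell $S_i := A_i \setminus A_{i-1}$. On $S_i$ one expands the three terms of $F_k$: the elastic term is controlled since $f_k \le 1$ and one has, using $|\nabla w_k^i|^2 \le c(|\nabla u_k|^2 + |\nabla \tilde u_k|^2 + \|\nabla\varphi_i\|_\infty^2 |u_k - \tilde u_k|^2)$, and similarly for $z_k^i$, where the key gain is the factor $|u_k - \tilde u_k|^2 \to 0$ and $|v_k - \tilde v_k|^2 \to 0$ in $L^1$; the potential term $(1-z_k^i)^2/(4\eps_k)$ is dominated by a convex-combination bound plus an $L^1$-small remainder; the Dirichlet term in $v$ is handled as for $u$.

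The crux — and the main obstacle — is that the error on a single shell $S_i$ need not be small because $\|\nabla\varphi_i\|_\infty^2$ is large and the elastic/Dirichlet gradient integrals of $u_k, \tilde u_k$ over $S_i$ need not vanish. This is exactly where the averaging over $i = 1,\dots,N$ enters: the shells $S_1,\dots,S_N$ are pairwise disjoint and all contained in $A \cap (A'\cup B)$, so
\be{
\sum_{i=1}^N \Big( \int_{S_i} \big( |\nabla u_k|^2 + |\nabla v_k|^2 + \eps_k^{-1}(1-v_k)^2 \big)\,dx + \int_{S_i}(\dots \tilde u_k, \tilde v_k \dots)\,dx \Big) \le c\big( F_k(u_k,v_k;A) + F_k(\tilde u_k,\tilde v_k;B) \big) + o(1),
}
so at least one index $i_k$ gives an error of order $1/N$ times the total energy plus an $L^1$-small term (the latter genuinely vanishes because $\|\nabla\varphi_i\|_\infty^2 \|u_k - \tilde u_k\|_{L^1}^2 \to 0$ for fixed $N$). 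Choosing $(w_k^{i_k}, z_k^{i_k})$ and passing to the limsup gives
\be{
F''(u,1;A'\cup B) \le F''(u,1;A) + F''(u,1;B) + \frac{c}{N}\big( F''(u,1;A) + F''(u,1;B) \big).
}
Letting $N \to \infty$ yields \eqref{e:subadd}. (If either $F''(u,1;A)$ or $F''(u,1;B)$ is infinite the inequality is trivial, so one may assume both are finite, which also legitimizes the use of recovery sequences with uniformly bounded energy needed for the averaging bound.)
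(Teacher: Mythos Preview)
Your overall strategy (De Giorgi averaging over nested shells) is correct and matches the paper, but your construction of the damage interpolant $z_k^i = \varphi_i v_k + (1-\varphi_i)\tilde v_k$ creates a genuine gap in the elastic term. When you invoke $f_k\le 1$ on the shell $S_i$ you end up needing
\[
\sum_{i=1}^N \int_{S_i} |\nabla u_k|^2\,dx \;\le\; c\, F_k(u_k,v_k;A),
\]
and this is \emph{false}: the functional only controls $\int f_k^2(v_k)|\nabla u_k|^2$, and near the transitions of $v_k$ the coefficient $f_k^2(v_k)$ can be arbitrarily small while $|\nabla u_k|$ blows up with bounded energy. So the averaging argument does not close. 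The paper avoids this by building $v_k^i$ with $v_k^A\wedge v_k^B$ in the layer where $u_k^i$ actually interpolates, and using \emph{monotonicity} of $f_k$: since $v_k^i\le v_k^A$ and $v_k^i\le v_k^B$ there, one has $f_k^2(v_k^i)|\nabla u_k^A|^2\le f_k^2(v_k^A)|\nabla u_k^A|^2$ (and likewise for $u_k^B$), so the shell contribution is dominated by $F_k(u_k^A,v_k^A;S_i)+F_k(u_k^B,v_k^B;S_i)$, which is summable.

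A secondary issue: the cross term is $\|\nabla\varphi_i\|_\infty^2 \int_{S_i}|u_k-\tilde u_k|^2\,dx$, an $L^2$ quantity, not the $L^1$ norm squared you wrote. Mere $L^1$ convergence of $u_k-\tilde u_k$ to $0$ does not kill it; the paper first truncates $u$ (and then the recovery sequences) to $L^\infty$, so that $L^1$ convergence upgrades to $L^2$. You should add this truncation step at the outset.
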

\begin{proof}
We assume that the right-hand side of \eqref{e:subadd} is finite, so that $u\in GBV(A\cup B)$ and $v=1$ $\Ln$-a.e.\ in $A\cup B$.
We can reduce the problem to the case of functions $u\in BV\cap L^\infty(A\cup B)$. This is 
a straightforward consequence of the fact that the energies $F_k$'s, and thus the $\Gamma$-limsup $F''$, 
are decreasing by truncations. Actually, thanks to $L^1$ lower semicontinuity, they are continuous 
under such an operation.

Under this assumption, let $(u_k^A,v_k^A)$, $(u_k^B,v_k^B)$ be recovery sequences for $(u,1)$ on $A$ 
and $B$ respectively, that is:
 \begin{equation}\label{e:rec1}
  (u_k^A,v_k^A), (u_k^B,v_k^B)\to (u,1) \text{ in } L^1(\Omega){\times} L^1(\Omega),
 \end{equation}
and
\begin{equation}\label{e:rec2}
 \ls F_k(u_k^A,v_k^A;A)=F''(u,1;A),\quad  \ls F_k(u_k^B,v_k^B;B)=F''(u,1;B). 
\end{equation}
Note that, again up to truncations, we may assume that 
\begin{equation}\label{e:rec3}
(u_k^A,v_k^A),\, (u_k^B,v_k^B)\quad\text{are bounded in } L^\infty(\Omega). 
\end{equation}
To simplify the calculations below we introduce the functionals 
$G_k:L^1(\Omega)\times\mathcal{A}(\Omega)\to[0,+\infty]$ given by
\[
 G_k(v;O):=\int_O\left(\frac{(1-v)^2}{4\ve_k}+\ve_k|\nabla v|^2\right)dx,\quad 
 \text{if $v\in H^1(\Omega)$,}
\]
$+\infty$ otherwise.
Notice that
\[
 F_k(u,v;O)=\int_O f_k^2(v)|\nabla u|^2dx+G_k(v;O).
\]
Let $\delta:=\dist(A^\prime,\partial A)>0$, and with fixed $M\in\N$, we set for all 
$i\in\{1,\ldots,M\}$
\[
A_i:=\left\{x\in \Omega:\,\dist(x,A^\prime)<\frac{\delta}{M}i\right\},
\]
and $A_0:=A^\prime$. Clearly, we have $A_{i-1}\subset\subset A_i\subset A$.
Denote by $\varphi_i\in C_c^1(\Omega)$ a cut-off function between $A_{i-1}$ and $A_i$,
i.e., $\varphi_i|_{A_{i-1}}=1$, $\varphi_i|_{A_{i}^c}=0$, and 
$\|\nabla \varphi_i\|_{L^\infty(\Omega)}\leq \frac{2M}{\delta}$. Then, set
\begin{equation}\label{e:uki}
 u_k^i:=\varphi_i\,u_k^A+(1-\varphi_i)u_k^B,
\end{equation}
and
\begin{equation}\label{e:vki}
 v_k^i:=\begin{cases}
 \varphi_{i-1}\,v_k^A+(1-\varphi_{i-1})(v_k^A\wedge v_k^B) & \text{ on } A_{i-1}\cr
         v_k^A\wedge v_k^B & \text{ on } A_i\setminus {A}_{i-1} \cr
         \varphi_{i+1}(v_k^A\wedge v_k^B)+(1-\varphi_{i+1})\,v_k^B & \text{ on } \Omega\setminus {A}_i.
        \end{cases}
\end{equation}
With fixed $i\in\{2,\ldots,M-1\}$, by the very definitions in \eqref{e:uki} and \eqref{e:vki} above $(u_k^i,v_k^i)\in H^1(\Omega){\times}H^1(\Omega)$ and
the related energy $F_k$ on $A'\cup B$ can be estimated as follows
\begin{equation}\label{e:split}
F_k(u_k^i,v_k^i;A'\cup B)\leq F_k(u_k^A,v_k^A;A_{i-2})+F_k(u_k^B,v_k^B;B\setminus {A}_{i+1})+
F_k(u_k^i,v_k^i;B\cap(A_{i+1}\setminus {A}_{i-2})). 
\end{equation}
Therefore, we need to bound only the last term. To this aim we further split the contributions in 
each layer; in estimating each of such terms we shall repeatedly use the monotonicity of $f_k$ and the 
fact that it is bounded by $1$. In addition, a  
positive constant, which may vary from line to line, 
will appear in the formulas below. Elementary computations and the very definitions in \eqref{e:uki} and
\eqref{e:vki} give, using $v^i_k\le v^A_k$,
\begin{multline}\label{e:i-1i-2}
F_k(u_k^i,v_k^i;B\cap(A_{i-1}\setminus {A}_{i-2}))\leq
\int_{B\cap(A_{i-1}\setminus {A}_{i-2})}f_k^2(v_k^A)|\nabla u_k^A|^2\,dx
+G_k(v_k^i;B\cap(A_{i-1}\setminus {A}_{i-2}))\\
\leq c\,\Big(F_k(u_k^A,v_k^A;B\cap(A_{i-1}\setminus {A}_{i-2}))
+F_k(u_k^B,v_k^B;B\cap(A_{i-1}\setminus {A}_{i-2}))\Big)\\
+\frac{c\,M^2\ve_k}{\delta^2}
\int_{B\cap(A_{i-1}\setminus {A}_{i-2})}|v_k^A-v_k^B|^2\,dx,
\end{multline}
\begin{multline}\label{e:ii-1}
F_k(u_k^i,v_k^i;B\cap(A_{i}\setminus {A}_{i-1}))\\
\leq c\int_{B\cap(A_{i}\setminus {A}_{i-1})}f_k^2(v_k^A\wedge v_k^B)
\left(|\nabla u_k^A|^2+|\nabla u_k^B|^2+\frac{4M^2}{\delta^2}|u_k^A-u_k^B|^2\right)\,dx
+G_k(v_k^A\wedge v_k^B;B\cap(A_{i}\setminus {A}_{i-1}))\\
\leq c\Big(F_k(u_k^A,v_k^A;B\cap(A_{i}\setminus {A}_{i-1}))
+F_k(u_k^B,v_k^B;B\cap(A_{i}\setminus {A}_{i-1}))\Big)
+\frac{c\,M^2}{\delta^2}\int_{B\cap(A_{i}\setminus {A}_{i-1})}|u_k^A-u_k^B|^2\,dx,
\end{multline}
and
\begin{multline}\label{e:i+1i}
F_k(u_k^i,v_k^i;B\cap(A_{i+1}\setminus {A}_{i}))\leq
\int_{B\cap(A_{i+1}\setminus {A}_{i})}f_k^2(v_k^B)|\nabla u_k^B|^2\,dx
+G_k(v_k^i;B\cap(A_{i+1}\setminus {A}_{i}))\\
\leq c\,\Big(F_k(u_k^A,v_k^A;B\cap(A_{i+1}\setminus {A}_{i}))
+F_k(u_k^B,v_k^B;B\cap(A_{i+1}\setminus {A}_{i}))\Big)+\frac{c\,M^2\ve_k}{\delta^2}
\int_{B\cap(A_{i+1}\setminus {A}_{i})}|v_k^A-v_k^B|^2\,dx.
\end{multline}
By adding \eqref{e:split}-\eqref{e:i+1i}, we deduce that
\begin{multline*}
F_k(u_k^i,v_k^i;A'\cup B)\leq F_k(u_k^A,v_k^A;A)+F_k(u_k^B,v_k^B;B)\\
+c\,\Big(F_k(u_k^A,v_k^A;B\cap(A_{i+1}\setminus {A}_{i-2}))
+F_k(u_k^B,v_k^B;B\cap(A_{i+1}\setminus {A}_{i-2}))\Big)\\
+\frac{c\,M^2}{\delta^2}\int_{B\cap(A_{i+1}\setminus {A}_{i-2})}|u_k^A-u_k^B|^2\,dx
+\frac{c\,M^2\ve_k}{\delta^2}\int_{B\cap(A_{i+1}\setminus {A}_{i-2})}|v_k^A-v_k^B|^2\,dx.
\end{multline*}
Hence, by summing up on $i\in\{2,\ldots,M-1\}$ and taking the average, for each $k$ we may find an index 
$i_k$ in that range such that
\begin{multline*}
F_k(u_k^{i_k},v_k^{i_k};A'\cup B)\leq F_k(u_k^A,v_k^A;A)+F_k(u_k^B,v_k^B;B)\\
+\frac{c}{M}\,\Big(F_k(u_k^A,v_k^A;B\cap(A\setminus {A^\prime}))
+F_k(u_k^B,v_k^B;B\cap(A\setminus {A^\prime}))\Big)\\
+\frac{c\,M}{\delta^2}\int_{B\cap(A\setminus {A^\prime})}|u_k^A-u_k^B|^2\,dx
+\frac{c\,M\ve_k}{\delta^2}\int_{B\cap(A\setminus {A^\prime})}|v_k^A-v_k^B|^2\,dx.
\end{multline*}
By \eqref{e:rec1} we deduce that $(u_k^{i_k},v_k^{i_k})\to (u,1)$ in $L^1(\Omega){\times} L^1(\Omega)$,
and actually in $L^q(\Omega){\times} L^q(\Omega)$ for all $q\in[1,+\infty)$ thanks to the uniform boundedness
assumption in \eqref{e:rec3}. Therefore, in view of \eqref{e:rec2} and the definition of $\Gamma$-limsup 
we infer that 
\[
F''(u,1;A'\cup B)\leq \left(1+\frac{c}{M}\right)\Big(F''(u,1;A)+F''(u,1;B)\Big).
\]
The conclusion then follows by passing to the limit on $M\uparrow\infty$.
\end{proof}
We next prove that $F''(u,1;\cdot)$ is controlled in terms of the Mumford-Shah functional $\MS$, whose 
definition is given in \eqref{e:MS}. This result gives a first rough estimate for the upper bound inequality. 
We shall improve on the jump part in Proposition~\ref{p:limsupndim} below and finally we shall conclude the proof of the $\Gamma$-limsup
inequality using a relaxation argument.
\begin{lemma}\label{l:boundglimsup} 
For all $u\in L^1(\Omega)$ and $A\in\mathcal{A}(\Omega)$ it holds
\begin{equation}\label{e:FH}
F''(u,1;A)\leq \MS(u;A).
\end{equation}
\end{lemma}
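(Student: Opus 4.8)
The plan is to dominate each $F_k$ by a standard Ambrosio--Tortorelli functional with a fixed admissible weight, and then invoke the known $\Gamma$-convergence of the latter to the Mumford--Shah functional.

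We may assume $\MS(u;A)<+\infty$, the inequality being trivial otherwise. The basic remark is that the $\Gamma$-limsup is monotone under pointwise ordering of functionals: if $\mathscr{G}_k\leq\mathscr{H}_k$ on $\Lr$ for every $k$, then a recovery sequence for the $\Gamma$-limsup of $(\mathscr{H}_k)$ is admissible for that of $(\mathscr{G}_k)$, whence $\mathscr{G}''\leq\mathscr{H}''$. So it suffices to find an admissible Ambrosio--Tortorelli weight $\psi$ with $f_k\leq\psi$ for all $k$. I would take $c_0:=\sup_k\ve_k^{1/2}\in(0,+\infty)$ and set $\psi:=1\wedge(c_0 f)$ on $[0,1)$, $\psi(1):=1$. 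Using \eqref{f0} and \eqref{f1} one checks that $\psi$ is continuous and nondecreasing on $[0,1]$, with $\psi^{-1}(0)=\{0\}$ and $\psi(1)=1$ (continuity at $1$ follows from $(1-s)f(s)\to\ell>0$, hence $c_0f(s)\to+\infty$ as $s\to1$); thus $\psi$ is an admissible weight for the functionals $AT_k^\psi$ of \eqref{e:ATk}. By the definition \eqref{fk} of $f_k$ we have $f_k(s)=1\wedge\ve_k^{1/2}f(s)\leq 1\wedge c_0f(s)=\psi(s)$ for every $s\in[0,1]$ and every $k$, so $f_k^2\leq\psi^2$ and therefore $F_k(\cdot;A)\leq AT_k^\psi(\cdot;A)$ on $\Lr$ for every $A\in\mathcal{A}(\Omega)$ and every $k$: indeed the bound is trivial when the right-hand side is $+\infty$, and otherwise both functionals are finite and differ only in the volume term, where $f_k^2(v)|\nabla u|^2\leq\psi^2(v)|\nabla u|^2$.

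It then remains to let $k\to\infty$. The (localized) Ambrosio--Tortorelli theorem recalled after \eqref{e:MS}, see \cite{amb-tort1,amb-tort2,focardi}, gives that $AT_k^\psi(\cdot;A)$ $\Gamma$-converges in $\Lr$ to $\MSt(\cdot;A)$ for every $A\in\mathcal{A}(\Omega)$, so that its $\Gamma$-limsup at $(u,1)$ equals $\MS(u;A)$. Combined with the monotonicity of the $\Gamma$-limsup and the pointwise bound $F_k\leq AT_k^\psi$, this yields $F''(u,1;A)\leq\MS(u;A)$, which is the assertion. The argument is essentially immediate; the only point requiring a little care is the localization of the Ambrosio--Tortorelli upper bound to an arbitrary open $A\subset\Omega$ rather than to $A=\Omega$, but this is standard (the recovery sequence for $\MSt$ is obtained by a local construction) and is contained in \cite{focardi}. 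Should one prefer a self-contained proof avoiding any appeal to Ambrosio--Tortorelli, one could instead reduce by density to functions $u$ whose jump set lies in a finite union of hyperplanes and apply, in the normal direction to each face, the one-dimensional optimal-profile construction of Proposition~\ref{p:limsupunidim}; there the delicate step would be the negligibility of the $(n-2)$-dimensional skeleton of the jump set.
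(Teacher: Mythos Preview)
Your proof is correct and follows essentially the same approach as the paper: dominate $f_k$ by a fixed admissible weight $\psi$ so that $F_k\le AT_k^\psi$, and then invoke the Ambrosio--Tortorelli $\Gamma$-convergence to $\MSt$. The only cosmetic difference is in the choice of $\psi$: the paper takes the simpler $\psi=\chi_{(0,1]}$, whereas you construct the continuous $\psi=1\wedge(c_0 f)$; both satisfy the hypotheses needed for $AT_k^\psi$.
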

\begin{proof}
Denote by $\psi:[0,1]\to[0,1]$ any nondecreasing 
lower-semicontinuous function such that $\psi^{-1}(0)=0$, $\psi(1)=1$ and
\[
\sup_{k}f_k(s)\leq\psi(s)\qquad\text{for all $s\in[0,1]$},
\]
for instance $\psi=\chi_{(0,1]}$ satisfies all the conditions written above.
Consider the corresponding functionals $AT_k^\psi:L^1(\Omega)\times L^1(\Omega)\to[0,+\infty]$ 
defined in \eqref{e:ATk},
and note that $F_k\leq AT_k^\psi$ for every $k$.
The upper bound inequality for $(F_k)$ then follows at once from the classical 
results by Ambrosio and Tortorelli (cp. \cite{amb-tort2}, and see also \cite{focardi}).
\end{proof}

We are now ready to prove the upper bound inequality.
\begin{proposition}\label{p:limsupndim}
For every $(u,v)\in \Lr$ it holds
\[
F''(u,v)\leq F(u,v).
\]
\end{proposition}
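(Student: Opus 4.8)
The plan is to run the direct method of $\Gamma$-convergence on $SBV^2$, identify the bulk and surface densities by one-dimensional constructions, and finally extend the resulting inequality to $GBV$ by relaxation and truncation. Since $F''(u,v)=F(u,v)=+\infty$ unless $v=1$ $\qo$ and $u\in GBV(\Omega)$, we may assume $v=1$, $u\in GBV(\Omega)\cap L^1(\Omega)$ with $\Phi(u)<\infty$, and must show $F''(u,1;\Omega)\le\Phi(u;\Omega)$. I first treat $u\in SBV^2(\Omega)\cap L^\infty(\Omega)$. By Lemma~\ref{l:wsub} the nondecreasing set function $F''(u,1;\cdot)$ is weakly subadditive; it is superadditive on disjoint open sets since the $F_k$ are; and by Lemma~\ref{l:boundglimsup} it is dominated by $\MS(u;\cdot)$, which for $u\in SBV^2\cap L^\infty$ is a finite measure absolutely continuous with respect to $\Ln+\Hn\res J_u$. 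A standard measure–property (De Giorgi--Letta) argument, using that the restriction of $\MS(u;\cdot)$ to a thin annulus is small, then shows that $F''(u,1;\cdot)$ is inner regular, hence the trace of a Borel measure, and coincides with its inner regular envelope. Since the functionals $F_k$ are invariant under translations in the $x$ and in the $u$ variables, the integral representation theorem for free-discontinuity functionals on $SBV$ (the global method of \cite{bou-bra-but}; see also \cite{focardi}) provides a convex $\phi\colon\R^n\to[0,+\infty)$ and a function $\vartheta\colon\R\times\Sn\to[0,+\infty)$ such that
\[
F''(u,1;A)=\int_A\phi(\nabla u)\,dx+\int_{J_u\cap A}\vartheta([u],\nu_u)\,d\Hn\qquad\text{for all }A\in\mathcal{A}(\Omega),
\]
while Lemma~\ref{l:boundglimsup} already gives $\phi(\xi)\le|\xi|^2$ (so in particular $\phi(0)=0$) and $\vartheta(s,\nu)\le1$.

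To estimate the surface density I fix $s\in\R$, $\nu\in\Sn$, and lift the one-dimensional construction of Proposition~\ref{p:limsupunidim} — namely the profile $(\alpha_\eta,\beta_\eta)$ furnished by Proposition~\ref{p2} — to the variable $x\cdot\nu$, keeping the remaining $n-1$ coordinates frozen. Testing this on the pure-jump function $s\,\chi_{\{x\cdot\nu>0\}}$ on a unit cube $Q$ with a pair of faces orthogonal to $\nu$, a change of variables gives $F''(s\,\chi_{\{x\cdot\nu>0\}},1;Q)\le g(|s|)$, and comparing with the integral representation (recall $\phi(0)=0$ and $\Hn(J_{s\chi}\cap Q)=1$) we obtain $\vartheta(s,\nu)\le g(|s|)$ for all $s$ and $\nu$.

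For the bulk density I distinguish two regimes. If $|\xi|\le\ell/2$ then already $\phi(\xi)\le|\xi|^2=h(|\xi|)$. If $|\xi|>\ell/2$, I approximate the affine function $x\mapsto\xi\cdot x$ on a cube $A\subset\subset\Omega$ by a laminate $w_m\in SBV^2\cap L^\infty$: a field with smooth slope $(\ell/2)\,\xi/|\xi|$ superposed with $m$ equispaced cracks orthogonal to $\xi$, each of opening $(|\xi|-\ell/2)/m$. Applying the integral representation to $w_m$ together with $\phi(\tfrac{\ell}{2}\tfrac{\xi}{|\xi|})\le(\ell/2)^2=h(\ell/2)$ and $\vartheta\le g$, and using $g(t)/t\to\ell$ as $t\downarrow0$ (Proposition~\ref{11}\ref{e22}), we get that $F''(w_m,1;A)$ is bounded by an energy converging, as $m\to\infty$, to $\big((\ell/2)^2+\ell(|\xi|-\ell/2)\big)\Ln(A)=h(|\xi|)\Ln(A)$. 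Since $w_m\to\xi\cdot x$ in $L^1$ and $F''(\cdot,1;A)$ is $L^1$-lower semicontinuous, and since $F''(\xi\cdot x,1;A)=\phi(\xi)\Ln(A)$ by the integral representation, we conclude $\phi(\xi)\le h(|\xi|)$. Collecting the two bounds, $F''(u,1;A)\le\int_Ah(|\nabla u|)\,dx+\int_{J_u\cap A}g(|[u]|)\,d\Hn=\Phi(u;A)$ for every $u\in SBV^2(\Omega)\cap L^\infty(\Omega)$.

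It remains to remove the extra regularity. For $u\in BV(\Omega)\cap L^\infty(\Omega)$ with $\Phi(u)<\infty$ I invoke the relaxation of free-discontinuity functionals of the type $\Phi$ (cf.~\cite{bou-bra-but}): there is $u_j\in SBV^2(\Omega)$ with $u_j\to u$ in $L^1(\Omega)$ and $\Phi(u_j;\Omega)\to\Phi(u;\Omega)$, the Cantor term $\ell|D^cu|$ being matched precisely because $h$ is affine with slope $\ell$ at infinity while $g$ has slope $\ell$ at the origin; then lower semicontinuity of $F''(\cdot,1;\Omega)$ yields $F''(u,1;\Omega)\le\liminf_j\Phi(u_j;\Omega)=\Phi(u;\Omega)$. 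Finally, for arbitrary $u\in GBV(\Omega)\cap L^1(\Omega)$ the truncations $u^M=(-M\vee u)\wedge M$ lie in $BV(\Omega)\cap L^\infty(\Omega)$, satisfy $\Phi(u^M;\Omega)\le\Phi(u;\Omega)$ because $h$ and $g$ are nondecreasing and truncation does not increase $|\nabla u|$, $|[u]|$ or $|D^cu|$, and converge to $u$ in $L^1(\Omega)$; lower semicontinuity again gives $F''(u,1;\Omega)\le\liminf_M\Phi(u^M;\Omega)\le\Phi(u;\Omega)$, which together with Proposition~\ref{p:liminfndim} completes the proof. The step I expect to be the most delicate is the bulk identification $\phi(\xi)\le h(|\xi|)$ for $|\xi|>\ell/2$: one has to arrange the microcrack laminate so that each crack is resolved by a one-dimensional phase-field profile without spoiling the global $H^1$ regularity of the recovery sequence or the affine boundary data, and so that the infinitely many $o(1)$ errors produced by the individual profiles add up to zero; verifying that $F''(u,1;\cdot)$ is genuinely a Borel measure — the hypothesis feeding the integral representation — is the other point that requires care, and it is exactly where Lemmas~\ref{l:wsub} and~\ref{l:boundglimsup} are used.
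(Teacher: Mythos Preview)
Your overall architecture matches the paper's, but there are two points worth flagging.

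First, a genuine gap: you assert that $F''(u,1;\cdot)$ is superadditive on disjoint open sets ``since the $F_k$ are''. This does not follow. For a recovery sequence $(u_k,v_k)$ on $A\cup B$ one has $F_k(u_k,v_k;A\cup B)=F_k(u_k,v_k;A)+F_k(u_k,v_k;B)$, but taking the $\limsup$ only yields $F''(u,1;A\cup B)\ge F'(u,1;A)+F'(u,1;B)$, not superadditivity of $F''$. The paper circumvents this by first extracting a $\overline\Gamma$-convergent subsequence with limit $\widehat F$: then $\widehat F=(F')_-=(F'')_-$, superadditivity is inherited from $F'$, subadditivity from Lemma~\ref{l:wsub}, and De Giorgi--Letta applies to $\widehat F$. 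One shows $\widehat F\le F$ on $BV$, combines with Proposition~\ref{p:liminfndim} to get $\widehat F=F$ independently of the subsequence, and concludes by Urysohn. Your argument is easily repaired along these lines, but as written the measure property for $F''$ itself is unjustified. (Also: \cite{bou-bra-but} is a relaxation paper; the integral representation you want is \cite{bou-fon-leo-masc}, and it needs lower growth bounds, which is why the paper perturbs by $\lambda\cdot\MS$.)

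Second, your laminate construction for $\phi(\xi)\le h(|\xi|)$ when $|\xi|>\ell/2$ --- the step you flag as most delicate --- is entirely superfluous. The paper only uses the crude bound $\widehat F(u,1;\Omega\setminus J_u)\le\int_\Omega|\nabla u|^2\,dx$ coming directly from Lemma~\ref{l:boundglimsup}, together with the surface estimate $\widehat F(u,1;J_u)\le\int_{J_u}g(|[u]|)\,d\Hn$. Relaxing the resulting $SBV^2$ functional $\int|\nabla u|^2+\int_{J_u}g(|[u]|)$ to $BV$ via \cite[Theorem~3.1 and Propositions~3.3--3.5]{bou-bra-but} automatically produces the correct bulk density $h$ and the Cantor term $\ell|D^cu|$, precisely because $g'(0^+)=\ell$ and $|\cdot|^2$ inf-convolved with $\ell|\cdot|$ gives $h$. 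So the relaxation step you already invoke does double duty; there is no need to build the microcrack laminate or worry about gluing profiles.
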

\begin{proof}
Since $L^1$ is separable,  given any subsequence $(F_{k_j})$ of $(F_k)$ we may extract a further 
subsequence, not relabeled for convenience, $\overline{\Gamma}$-converging to some 
$\widehat{F}$ (see \cite[Theorem~16.9]{dalmaso}). 

The functional $\widehat{F}(u,v;\cdot)$ is by definition increasing and inner regular. Since $F_k(u,v;\cdot)$ is 
additive, one easily deduces that $F'$ is superadditive
and from this that its inner regular envelope $\widehat F=(F')_-$ is superadditive (see \cite[Proposition~14.18 or Proposition~16.12]{dalmaso}).

Using Lemma~\ref{l:wsub} one can show that 
$\widehat F=(F'')_-$ is subadditive  (see \cite[Lemma~14.20 and the proof of Proposition~18.4]{dalmaso}).
Therefore $\widehat F$ is the restriction to open sets of the Borel measure
\begin{equation*}
  F_*(u,v;E)=\inf \{ \widehat F(u,v;A): A\in \mathcal{A}(\Omega); E\subset A\}\,,
\end{equation*}
see \cite[Theorem~14.23]{dalmaso}, in the following we identify $\widehat F$ and $F_*$.

If $u\in L^1(\Omega)$ is such that $\MS(u;\Omega)<+\infty$, then by Lemma~\ref{l:boundglimsup} we obtain
$F''(u,1;\cdot)\le \MS(u;\cdot)<+\infty$ on all open sets, and by the regularity 
properties of Radon measures $F''$ coincides with its inner envelope.
Indeed, for a given open set $A$ and $\ve>0$, choose open sets $A'$, $A''$ and $C$ with 
$A'\subset\subset A''\subset\subset A$ 
and $A\setminus A'\subset C$ such that $\MS(u;C)\le\ve$. 
Then use Lemmas~\ref{l:wsub} and \ref{l:boundglimsup} to estimate
$F''(u,1;A)\le F''(u,1;A'\cup C)\le F''(u,1;A'')+\MS(u;C)\leq F''(u,1;A'')+\ve$.
In other words,  $\widehat{F}(u,1)$ is the 
$\Gamma$-limit of $F_{k_j}$ for all $u$ such that $\MS(u)<+\infty$.

 For all $u\in SBV^2(\Omega)$ in particular the estimate in Lemma~\ref{l:boundglimsup} implies that
 \begin{equation}\label{e:upbvolume}
 \widehat{F}(u,1;\Omega\setminus J_u)\leq\int_\Omega |\nabla u|^2dx.
 \end{equation}
We provide below for the same $u$ the estimate
\begin{equation}\label{e:upbvsalto}
 \widehat{F}(u,1;J_u)\leq\int_{J_u}g(|[u]|)\,d\Hn.
\end{equation}
Given this for granted we conclude as follows: we consider the functional $F_\infty:BV(\Omega)\to[0,+\infty]$
\[
F_\infty(u):=\begin{cases}
\displaystyle{
\int_\Omega |\nabla u|^2dx+\int_{J_u}g(|[u]|)\,d\Hn} & \textrm{if $u\in SBV^2(\Omega)$}
\cr
+\infty & \textrm{otherwise on $BV(\Omega)$.}
             \end{cases}
\] 
Further, note that by \cite[Theorem~3.1 and Propositions~3.3-3.5]{bou-bra-but} its relaxation w.r.to the $w\ast\hbox{-}BV$ 
topology is given on $BV(\Omega)$ by $F(\cdot,1)$. 
Since, by \eqref{e:upbvolume} and \eqref{e:upbvsalto} we have that $\widehat{F}\leq F_\infty$, 
and $\widehat{F}(\cdot,1)$ is $L^1$-lower semicontinuous, we infer that 
\[
\widehat{F}(u,1)\leq F(u,1)\quad\textrm{for all $u\in BV(\Omega)$.}
\]
We conclude that the same inequality is true for all $u\in GBV\cap L^1(\Omega)$ by the usual truncation 
argument. 

Finally, by combining the latter estimate with the lower estimate of Proposition~\ref{p:liminfndim}
allows us to deduce that the $\Gamma\hbox{-}$limit does not depend on the chosen subsequence
and it is equal to $F$. Hence, by Urysohn's property the whole family $(F_k)$ $\Gamma\hbox{-}$converges 
to $F$ (cp. \cite[Proposition~8.3]{dalmaso}). 

Let us now prove formula \eqref{e:upbvsalto}. To this aim, fixed $\lambda>0$ we introduce the perturbed 
functional 
\[
\widehat{F_{\lambda}}(u,1):=\widehat{F}(u,1)+\lambda\Big(\int_{\Omega}|\nabla u|^2dx
+\int_{J_u}(1+|[u]|)d\Hn\Big)
\]
for all $u\in SBV^2(\Omega)$. We may apply to $\widehat{F_{\lambda}}$ the integral representation result 
\cite[Theorem~1]{bou-fon-leo-masc} to infer that for $\Hn$-a.e. $x\in J_u$
\bm{\label{e:flambda}
\frac{d\widehat{F_{\lambda}}(u,1;\cdot)}{d(\Hn\res J_u)}(x)=\limsup_{\delta\downarrow 0}\frac{1}{\delta^{n-1}}
\inf\left\{\widehat{F_{\lambda}}(w,1;x+\delta\,Q_{\nu_u(x)}):\,w\in SBV^2\big(x+\delta\,Q_{\nu_u(x)}\big),\right.
\\ \left.w=u_x \text{ on a neighborhood of }x+\delta\,\partial Q_{\nu_u(x)}\right\},
}
where
\[
u_x(y):=
\begin{cases}
 u^+(x) & \text { if }\langle y-x,\nu_u(x)\rangle>0\cr
 u^-(x) & \text { if }\langle y-x,\nu_u(x)\rangle<0
\end{cases}
\]
and $Q_{\nu_u(x)}$ denotes any cube of side $1$ centered in the origin and with a face
orthogonal to $\nu_u(x)$.
Hence, it is enough to show that for $\Hn$-a.e. $x\in J_u$
\begin{equation}\label{e:minest}
\limsup_{\delta\downarrow 0}\frac 1{\delta^{n-1}}\widehat{F}(u_x,1;x+\delta\, Q_{\nu_u(x)})\leq g(|[u](x)|),
\end{equation}
since by taking $u_x$ itself as test function in \eqref{e:flambda} we get
$$
\frac{d\widehat{F_{\lambda}}(u,1;\cdot)}{d(\Hn\res J_u)}(x)\leq \limsup_{\delta\downarrow 0}\frac{1}{\delta^{n-1}}
\widehat{F}(u_x,1;x+\delta\, Q_{\nu_u(x)})+\lambda(1+|[u](x)|),
$$
in turn implying 
\[
 \widehat{F}(u,1;J_u)\leq\widehat{F_{\lambda}}(u,1;J_u)\leq
 \int_{J_u}\big(g(|[u](x)|)+\lambda+\lambda|[u](x)|\big)\,d\Hn.
\]
Finally, \eqref{e:upbvsalto} follows at once by letting $\lambda\downarrow 0$.

Formula \eqref{e:minest} easily follows by repeating the one-dimensional construction 
of Proposition~\ref{p:limsupunidim}. More precisely, assume $x=0$ and $\nu_u(x)=e_n$ for simplicity. 
With fixed $\eta>0$, let $T_{\eta}>0$ and $\alpha_{\eta},\beta_{\eta}\in H^1\big((0,T_{\eta})\big)$ be such that 
$\alpha_{\eta}(0)=u^-(0),$ $\alpha_{\eta}(T_{\eta})=u^+(0)$, $\beta_{\eta}(0)=\beta_{\eta}(T_\eta)=1$, 
$u^-(0)\leq\alpha_{\eta}\leq u^+(0)$, $0\leq\beta_{\eta}\leq 1$, and
\[
\int_0^{T_{\eta}} \left(f^2(\beta_{\eta})|\alpha_{\eta}'|^2+
\frac{|1-\beta_{\eta}|^2}{4}+|\beta_{\eta}'|^2\right)dt\leq g\big(|[u](0)|\big)+\eta.
\]
Let $A_j:=(-\frac{\ve_{k_j}T_\eta}{2},\frac{\ve_{k_j}T_\eta}{2})$, and set
\ba
{u_j(y):=\left\{ \begin{array}{ll} 
\ {\displaystyle \alpha_{\eta}\left(\frac{y_n}{\ve_{k_j}}+\frac{T_\eta}{2}\right)} 
& \textrm{if $y_n\in A_j$}\\
u_0 & \text{otherwise},
\end{array} \right.\nonumber\\
v_j(y):=\left\{ \begin{array}{ll} 
\ {\displaystyle \beta_{\eta}\left(\frac{y_n}{\ve_{k_j}}+\frac{T_\eta}{2}\right)} 
& \textrm{if $y_n\in A_j$}\\
1 & \text{otherwise}.
\end{array} \right.
}
Clearly, $(u_j,v_j)\to(u_0,1)$ in $L^1(Q_{e_n})\times L^1(Q_{e_n})$, and if 
$Q'_{e_n}=Q_{e_n}\cap(\R^{n-1}\times\{0\})$, a change of variable yields 
\begin{multline*}
F_{k_j}(u_j,v_j;\delta\,Q_{e_n})=F_{k_j}\big(u_j,v_j,\delta\,Q'_{e_n}\times A_j\big)\\
\leq\delta^{n-1}\int_0^{T_{\eta}} \left(f^2(\beta_{\eta})|\alpha_{\eta}'|^2+
\frac{|1-\beta_{\eta}|^2}{4}+|\beta_{\eta}'|^2\right)dt\leq \delta^{n-1}(g\big(|[u](0)|\big)+\eta).
\end{multline*}
Therefore, by the very definition of $\widehat{F}$ we infer that
\[
\widehat{F}(u_0,1;\delta\,Q_{e_n})\leq \delta^{n-1}(g\big(|[u](0)|\big)+\eta),
\]
and estimate \eqref{e:minest} follows at once dividing by $\delta^{n-1}$ and taking the superior 
limit as $\delta\downarrow 0$, and finally by letting $\eta\downarrow 0$ in the formula above.
 \end{proof}

The proof of the compactness result Theorem~\ref{t:comp} follows the lines of \cite[Theorem 7.4]{dm-iur},
so we just sketch the relevant arguments and refer to \cite{dm-iur} for more details.
\begin{proof}[Proof of Theorem~\ref{t:comp}]
One first proves the thesis in the one-dimensional case under the hypothesis that $u_k$ is bounded in 
$L^\infty(\Omega)$. Then one extends the proof to the $n$-dimensional case and finally removes the 
boundedness assumption. 

Let us start assuming that $n=1$ and that $\sup_k ||u_k||_{L^\infty(\Omega)}<+\infty$. 
Up to a diagonalization argument, one reduces to study the case $\Omega=(0,1)$. Repeating the proof of 
Theorem~\ref{p:liminfunidim} one finds that $v_k\to1$ in $L^1(\Omega)$ and that for every $\delta>0$ there 
exists a finite subset $S\subset\Omega$ for which
$$(1-\omega(\delta))\int_{\Omega\setminus S_{\eta}}h(|u'_k|)dx\leq F_k(u_k,v_k,\Omega\setminus S_{\eta})$$
holds for $\eta>0$ small (dependently on $\delta$) and for $k$ large (dependently on $\eta$),
where $\omega$ is a modulus of continuity provided by $f$ and $S_{\eta}:=\bigcup_{i=1}^L(t_i-\eta,t_i+\eta)$.
This implies by assumption that $u_k$ is bounded in $BV(\Omega\setminus S_{\eta})$ uniformly with respect 
to $k$ and $\eta$.
Hence up to subsequences $u_k$ converges to a function $u\in BV(\Omega\setminus S_{\eta})$ $\Le$-a.e.\ in 
$\Omega\setminus S_{\eta}$.
The boundedness hypothesis and a diagonalization argument yield that $u$ in fact belongs to $BV(\Omega)$ 
and that $u_k\to u$ $L^1(\Omega)$. 

\medskip
In order to generalize the previous result to the case $n>1$, one applies a compactness result by 
Alberti, Bouchitt\'{e}, and Seppecher \cite[Theorem 6.6]{alberti}. Indeed, fixed $\xi\in\Sn$ and $\delta>0$, 
one can introduce the sequence $w_k$ whose slices satisfy
\bes{
& (w_k)^\xi_y:=\begin{cases}(u_k)^{\xi}_y & \textrm{if $y\in A_k$,}\\
                             0            & \textrm{otherwise,} 
						 \end{cases}\\
& A_k:=\{y\in \Omega^{\xi}:F^1_k((u_k)^{\xi}_y,(v_k)^\xi_y)\leq L\},}
where $F_k^1$ denotes the one-dimensional counterpart of the functional $F_k$ and $L$ is  chosen properly and
depends on $\delta$.
An easy computation shows that $w_k$ is bounded in $L^\infty(\Omega)$, that $u_k$ is in a $\delta$-neighborhood 
of $w_k$ in $L^1(\Omega)$, and that $(w_k)^\xi_y$ is pre-compact in $L^1(\Omega)$ (the last property follows 
from the first part of the proof).
Then the pre-compactness of $u_k$ in $L^1(\Omega)$ is ensured by \cite[Theorem 6.6]{alberti} as $\xi$ varies 
in a basis of $\Rn$.

\medskip
If $u_k$ is not bounded in $L^\infty(\Omega)$ the argument above applies to the truncations, so that up 
to subsequences $u_k^M\to u_M$ in $L^1(\Omega)$ and $\qo$, with $u_M\in BV(\Omega)$, for every $M\in \N$. 
One can prove that the function
$$u:=\lim_{M\to+\infty}u_M$$
is well-defined, finite $\qo$, and its truncation $u^M$ coincides with $u_M$ $\qo$. This straightforwardly 
implies that $u_k\to u$ $\qo$ and that $u\in GBV\cap L^1(\Omega)$.
\end{proof}

\section{Further results}\label{s:further}
In this section we build upon the results in Sections~\ref{s:stat}-\ref{s:ndim} to obtain in the 
limit different models by slightly changing the approximating energies $F_k$'s.
More precisely, we shall approximate a cohesive model with the Dugdale's surface density, a 
cohesive model with power-law growth at small openings, and a model in Griffith's brittle fracture.

This task will be accomplished by letting the function $f$ vary as in item (ii) 
of Proposition~\ref{p:gl} in the first instance, as in item (iii) in the third, and suitably
in the second (cp. (iii) of Proposition~\ref{p:tp} below), respectively. 
More precisely, we consider a sequence of functions $(\f{j})$ satisfying \eqref{f0} and \eqref{f1}
and for all $j,k\in\N$ introduce the energies
\begin{equation}\label{e:Fj}
\Fk jk(u,v):=\begin{cases} 
\displaystyle{\int_\Omega{\Big((\fk jk)^2(v)|\nabla u|^2+\frac{(1-v)^2}{4\ve_k}
+\ve_k|\nabla v|^2\Big) dx}} 
& \textrm{if $(u,v)\in \Hr$}\cr
& \text{and $0\leq v\leq1$ $\qo$},\cr
+ \infty & \text{otherwise},
\end{cases} 
\end{equation}
where $\fk jk(s):=1\wedge\ve_k^{1/2} \f{j}(s)$.

In each of Theorems~\ref{t:Dugdale}, \ref{t:sublin}, and \ref{t:MS} below we shall further
specify the nature of the sequence $(\f{j})$.

\subsection{Dugdale's cohesive model}
\label{subsecdugdale}
In order to approximate the Dugdale's model, i.e., to get in the limit 
$\D:L^1(\Omega)\to[0,+\infty]$ 
\begin{equation}\label{e:Phit}
\D(u):=
  \begin{cases} 
\displaystyle{\int_\Omega h(|\nabla u|)dx+\int_{J_{u}}\big(\gD |[u]|\big)d\Hn+\ell|D^cu|(\Omega)}
& \textrm{if $u\in GBV(\Omega)$,}\cr\cr
+\infty & \textrm{otherwise},
  \end{cases} 
\end{equation}
with $h$ as in \eqref{h},
we shall consider the specific choice
\begin{equation}\label{e:fjPhit}
\f j(s):=(\aj_j\,s)\vee f(s)
\end{equation}
with $f$ satisfying \eqref{f0} and \eqref{f1}, and
\begin{equation}\label{e:alphaj}
\text{$(\aj_j)$ nondecreasing, $\aj_j\uparrow\infty$ and such that 
$\aj_j\,\ve_j^{1/2}\downarrow 0$.}
\end{equation}
\begin{theorem}\label{t:Dugdale}
Suppose that $(\f j)$ is as in \eqref{e:fjPhit} and \eqref{e:alphaj} above.

Then, the functionals $\Fk kk$ $\Gamma$-converge in $\Lr$ to the functional $\Dt$ 
defined as follows  
\begin{equation}\label{e:F1}
\Dt(u,v):=\begin{cases} 
\D(u) & \textrm{if $v=1$ $\qo$,}\cr
+\infty & \text{otherwise}.
\end{cases} 
\end{equation}
\end{theorem}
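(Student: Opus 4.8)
The plan is to combine Theorem~\ref{t:gamma-lim}, which applies to each fixed function $\f j$ (it being of the admissible form \eqref{f0}--\eqref{f1}), with the monotonicity result of Proposition~\ref{p:gl}(ii) for the liminf inequality, and to re-run the $n$-dimensional upper bound of Section~\ref{s:ndim} with $\f k$ in place of $f$ for the limsup inequality. First I would record the relevant facts about the approximating functions: each $\f j=(\aj_j\,\cdot)\vee f$ satisfies \eqref{f0} and \eqref{f1} with the same constant $\ell$ (near $s=1$ the term $f(s)$ dominates $\aj_j s$, so $(1-s)\f j(s)\to\ell$), the sequence $(\f j)$ is nondecreasing in $j$ since $(\aj_j)$ is, and $\f j(s)\uparrow\infty$ for every $s\in(0,1)$ since $\aj_j s\to\infty$. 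Hence Proposition~\ref{p:gl}(ii) applies: denoting by $g_j$ the density \eqref{g} built from $\f j$, one has $g_j\le g_{j+1}$ and $g_j(s)\uparrow 1\wedge\ell s$ for every $s\ge0$, while $g_j(s)\le 1\wedge\ell s$ for each $j$ by Proposition~\ref{11}, item~\ref{e19}.

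For the lower bound, since $(\aj_j)$ is nondecreasing we have $\f k\ge\f j$, hence $\fk kk\ge\fk jk$ and thus $\Fk kk\ge\Fk jk$ on $\Lr$, for all $k\ge j$. Therefore, for every $(u_k,v_k)\to(u,v)$ in $\Lr$, $\liminf_k\Fk kk(u_k,v_k)\ge\liminf_k\Fk jk(u_k,v_k)\ge F_j(u,v)$, where $F_j$ denotes the functional \eqref{F} with $g$ replaced by $g_j$ (Theorem~\ref{t:gamma-lim} applied with $f:=\f j$; note that $h$ and $\ell$ are unchanged). Passing to the infimum over such sequences and then to the supremum over $j$, and using the monotone convergence theorem for the jump integral $\int_{J_u}g_j(|[u]|)\,d\Hn\uparrow\int_{J_u}(1\wedge\ell|[u]|)\,d\Hn$ (the bulk and Cantor terms of $F_j$ being $j$-independent), we obtain $\Gamma\hbox{-}\liminf_k\Fk kk(u,v)\ge\sup_jF_j(u,v)=\Dt(u,v)$.

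For the upper bound, the abstract part of Section~\ref{s:ndim}---Lemmas~\ref{l:wsub} and \ref{l:boundglimsup} and the integral-representation argument of Proposition~\ref{p:limsupndim}---applies verbatim to $\Fk kk$, since it only uses $\fk kk\le1$, $\fk kk$ nondecreasing, $\fk kk(0)=0$, and $\sup_k\fk kk\le\chi_{(0,1]}$. Hence along any subsequence of $(\Fk kk)$ there is a further subsequence $\overline\Gamma$-converging to a Borel measure $\widehat F$ with $\widehat F(u,1;\Omega\setminus J_u)\le\int_\Omega|\nabla u|^2\,dx$ on $SBV^2(\Omega)$, and the only new ingredient is the jump estimate $\widehat F(u,1;J_u)\le\int_{J_u}(1\wedge\ell|[u]|)\,d\Hn$. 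By the localization scheme of Proposition~\ref{p:limsupndim} it suffices, for $\Hn$-a.e.\ jump point $x$ with opening $s:=|[u](x)|$, to build a one-dimensional transition from $0$ to $s$ of rescaled width $o(1)$ whose $\Fk kk$-energy tends to $1\wedge\ell s$. I would obtain it by rescaling (as in Proposition~\ref{p:limsupunidim}, using $\fk kk\le\eps_k^{1/2}\f k$) a profile $(\alpha_k,\beta_k)\in\mathcal{U}_s(0,T_k)$ in which $\beta_k$ dips from $1$ to a level $\rho_k\in(0,1)$ and $\alpha_k$ performs its whole jump on the flat stretch $\{\beta_k=\rho_k\}$; optimizing the length of that stretch, the $\Fk kk$-energy becomes $(1-\rho_k)^2+\f k(\rho_k)(1-\rho_k)s+o(1)$ with a rescaled width of order $\eps_k\,\f k(\rho_k)s/(1-\rho_k)+o(1)$. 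If $\ell s\ge1$ one takes $\rho_k\to0$ with $\aj_k\rho_k\to0$ (so $\f k(\rho_k)\to0$), getting energy $\to1$ and width $\to0$; if $\ell s<1$ one takes $\rho_k=1-\sigma_k$ with $\eps_k^{1/2}\ll\sigma_k\ll\aj_k^{-1}$ (e.g.\ $\sigma_k:=\eps_k^{1/4}\aj_k^{-1/2}$, admissible exactly because $\aj_k\eps_k^{1/2}\to0$), so that $(1-\rho_k)^2\to0$, $\f k(\rho_k)(1-\rho_k)=f(1-\sigma_k)\sigma_k+o(1)\to\ell$ by \eqref{f1} (using $\aj_k\sigma_k\to0$), and the width is $\sim\ell s\,\eps_k/\sigma_k^2=\ell s\,\eps_k^{1/2}\aj_k\to0$. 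In either case $(u_k,v_k)\to(u,1)$ and $\widehat F(u_x,1;x+\delta Q_{\nu_u(x)})\le\delta^{n-1}(1\wedge\ell s+o(1))$, whence the jump estimate upon dividing by $\delta^{n-1}$ and letting $\delta\downarrow0$.

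With the jump estimate established I would conclude exactly as in Proposition~\ref{p:limsupndim}: $\widehat F(\cdot,1)\le F_\infty$ on $BV(\Omega)$, where $F_\infty(u):=\int_\Omega|\nabla u|^2\,dx+\int_{J_u}(1\wedge\ell|[u]|)\,d\Hn$ on $SBV^2(\Omega)$ and $+\infty$ otherwise; since $1\wedge\ell s$ is nondecreasing, subadditive, concave, and has slope $\ell$ at the origin---coinciding with the recession slope of $h$---its relaxation with respect to the $w\ast\hbox{-}BV$ topology is $\D$ by \cite[Theorem~3.1 and Propositions~3.3-3.5]{bou-bra-but}, and $L^1$-lower semicontinuity of $\widehat F(\cdot,1)$ upgrades $\widehat F(\cdot,1)\le\D$ from $SBV^2$ to $BV$ and then, by truncation, to $GBV\cap L^1(\Omega)$. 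Together with the lower bound this forces the $\Gamma$-limit to equal $\Dt$ independently of the extracted subsequence, so Urysohn's property gives the $\Gamma$-convergence of the whole sequence $\Fk kk$. The main obstacle is the one-dimensional construction in the regime $\ell s<1$: the damage level $\rho_k$ must be close enough to $1$ that $(1-\rho_k)\f k(\rho_k)\to\ell$, so the elastic cost realizes the Dugdale slope $\ell s$, yet far enough from $1$ that the rescaled transition width still vanishes; the window $\eps_k^{1/2}\ll1-\rho_k\ll\aj_k^{-1}$ is nonempty precisely under the standing assumption $\aj_k\eps_k^{1/2}\to0$, and this is where that hypothesis is used decisively.
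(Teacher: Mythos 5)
Your proposal is correct and follows essentially the same route as the paper: the lower bound via the monotone comparison $\Fk kk\ge\Fk jk$, Theorem~\ref{t:gamma-lim} for each fixed $\f j$, and Proposition~\ref{p:gl}(ii) to pass $g_j\uparrow 1\wedge\ell s$; the upper bound via the abstract $\overline\Gamma$-machinery of Lemmas~\ref{l:wsub}--\ref{l:boundglimsup} and Proposition~\ref{p:limsupndim}, reduced to a one-dimensional optimal-profile construction on the jump set, followed by the relaxation result of \cite{bou-bra-but} and truncation. The only cosmetic differences are in the 1D jump construction: you treat $\ell s\ge1$ by an explicit profile with $\rho_k\to0$, whereas the paper disposes of that range directly by the Mumford--Shah bound of Lemma~\ref{l:boundglimsup} (restricting the construction to $|[u]|<\ell^{-1}$); and for $\ell s<1$ you fix the dip level at $\rho_k=1-\eps_k^{1/4}a_k^{-1/2}$, interior to the window $\eps_k^{1/2}\ll1-\rho_k\ll a_k^{-1}$, while the paper chooses the crossover level $s_j$ with $a_{k_j}s_j=f(s_j)$, so $1-s_j\sim\ell a_{k_j}^{-1}$, sitting at the edge of the same window. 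Both choices give transition width $\sim\eps_k a_k^2\to0$ and energy $(1-\rho_k)f^{(k)}(\rho_k)s+(1-\rho_k)^2\to\ell s$, and both isolate $a_k\eps_k^{1/2}\to0$ as the decisive hypothesis.
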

\begin{proof}[Proof of Theorem~\ref{t:Dugdale}]
The very definitions in \eqref{Fk} and \eqref{e:Fj} give $\Fk jk\leq \Fk kk$ for $j\leq k$, 
being $(\f j)$ nondecreasing by assumption. Hence, by Theorem~\ref{t:gamma-lim} we deduce
\begin{equation}\label{e:liminfFj0}
\Gamma\hbox{-}\liminf_k \Fk kk(u,v)\geq \F j(u,v),
\end{equation}
where $\F j$ is defined as $F$ in \eqref{F} with $f$ substituted by $\f j$ in formulas
\eqref{f1} and \eqref{h} defining the volume density, and \eqref{g} defining the surface density. 

In particular, being $\ell_j=\ell$ for all $j$, the corresponding volume density $h_j$ 
equals the function $h$ in \eqref{h}. 
Moreover, the surface energy densities $g_j$ are dominated by the constant $1$, 
and by item (ii) in Proposition~\ref{p:gl} we have $\lim_jg_j(s)=\gD s$ 
for all $s\in[0,+\infty)$. 
In conclusion, if $\Gamma\hbox{-}\liminf_k \Fk kk(u,v)<+\infty$, we infer that 
$v=1$ $\qo$, $u\in GBV(\Omega)$, and
\[
\Gamma\hbox{-}\liminf_k\Fk kk(u,1)\geq \D(u),
\]
by the Dominated Convergence theorem, as $j\uparrow+\infty$ in \eqref{e:liminfFj0}.

The upper bound inequality follows by arguing as in Proposition~\ref{p:limsupndim}.
Indeed, we first note that by a careful inspection of the proofs, Lemmas~\ref{l:wsub} 
and \ref{l:boundglimsup} are still valid in this generalized framework. 
More precisely, Lemma~\ref{l:wsub} continue to hold true as there we have only used that each 
function $f_k=1\wedge\ve_k^{1/2} f$ in \eqref{fk} is nondecreasing and bounded by $\chi_{(0,1]}$, 
properties enjoyed by $\fk kk$ as well.

In conclusion, as a first step we establish the estimate
\begin{equation}\label{e:minest2}
\limsup_{\delta\downarrow 0}\frac 1{\delta^{n-1}}\widehat{F}(u_x,1;x+\delta\, Q_{\nu_u(x)})\leq 
\gD{|[u](x)|},
\end{equation}
for $u\in SBV^2(\Omega)$ and for $\Hn$-a.e. $x\in J_u$, where $\widehat{F}$ is the 
$\overline{\Gamma}$-limit of a properly chosen subsequence $(\Fk{k_j}{k_j})$ of $(\Fk kk)$
(cp. Proposition~\ref{p:limsupndim}).

Given \eqref{e:minest2}, the derivation of the upper bound inequality in general follows 
exactly as in Proposition~\ref{p:limsupndim}.

Let us now prove \eqref{e:minest2} by means of a one-dimensional construction.
For the sake of simplicity we assume $x=0$ and $\nu_u(x)=e_n$.
Actually, in view of the estimate in Lemma~\ref{l:boundglimsup} we need only to discuss 
the case $|[u](0)|<\ell^{-1}$. To this aim, set 
\[
s_j:=\sup\{s\in[0,1):\,\aj_{k_j}s=f(s)\},                 
\]
it is easy to check that $s_j$ it is actually a maximum, i.e., $\aj_{k_j}s_j=f(s_j)$, and that 
$s_j\leq s_{j+1}<1$ with $s_j\uparrow 1$.
Let now 
\[
T_{j}:=|[u](0)|\frac{f(s_j)}{1-s_j}, 
\]
then $T_j\uparrow\infty$. Define $\alpha_j(t):=u^-(0)$ on $[-T_j-1,-T_j]$, 
$\alpha_j(t):=[u](0)\cdot(\frac{t}{2T_j}+\frac12)+u^-(0)$ on $[-T_j,T_j]$, 
$\alpha_j(t):=u^+(0)$ on $[T_j,T_j+1]$, and $\beta_j(t):=s_j$ on $[-T_j,T_j]$, 
$\beta_j(t):=(1-s_j)(|t|-T_j)+s_j$ otherwise in $[-T_j-1,T_j+1]$.

Setting $A_j:=(-{\ve_{k_j}(T_j+1)},{\ve_{k_j}(T_j}+1))$, we have that $\Le(A_j)\to0$ as $j\uparrow\infty$ 
by \eqref{e:alphaj}. Indeed, in view of \eqref{f1} and the definition of $s_j$ it is easy to deduce
that $(1-s_j)\aj_{k_j}\to\ell$ as $j\uparrow\infty$, so that 
$\ve_{k_j}T_j\sim\ve_{k_j}\aj_{k_j}^2\to 0$ 
as $j\uparrow\infty$ thanks to \eqref{e:alphaj}. Therefore, if 
\ba
{u_j(y):=\left\{ \begin{array}{ll} 
 {\displaystyle \alpha_j\left(\frac{y_n}{\ve_{k_j}}\right)} 
& \textrm{if $y_n\in A_j$}\\
u_0 & \text{otherwise},
\end{array} \right.\nonumber\\
v_j(y):=\left\{ \begin{array}{ll} 
 {\displaystyle \beta_j\left(\frac{y_n}{\ve_{k_j}}\right)} 
& \textrm{if $y_n\in A_j$}\\
1 & \text{otherwise},
\end{array} \right.
}
then $(u_j,v_j)\to (u_0,1)$ on $L^1(Q_{e_n})\times L^1(Q_{e_n})$,
where  $u_0=u^-(0)\chi_{\{y_n\leq 0\}}+u^+(0)\chi_{\{y_n> 0\}}$.

Moreover, if $Q'_{e_n}=Q_{e_n}\cap(\R^{n-1}\times\{0\})$, then a change of variable yields 
\begin{multline*}
\Fk{k_j}{k_j}(u_j,v_j;\delta\,Q_{e_n})=\Fk {k_j}{k_j}\big(u_j,v_j;\delta\,Q'_{e_n}\times A_j\big)\\
\leq\delta^{n-1}\left(\int_{-T_j}^{T_j}\bigg(f^2(\beta_j)|\nabla\alpha_j|^2+\frac{(1-\beta_j)^2}4\bigg)dt
+2\int_{T_j}^{T_j+1} \left(\frac{|1-\beta_j|^2}{4}+|\beta'_j|^2\right)dt\right)\\
=\delta^{n-1}\left(
\bigg(f^2(s_j)\frac{|[u](0)|^2}{2T_j}+2T_j\frac{(1-s_j)^2}4\bigg)
+2(1-s_j)^2\int_{T_j}^{T_j+1}\frac{(t-(T_j+1))^2}{4}dt+2(1-s_j)^2\right)\\
=\delta^{n-1}\left((1-s_j)f(s_j)|[u](0)|+\frac{13}6(1-s_j)^2\right)
=\delta^{n-1}\big(\ell|[u](0)|+o(1)\big)\qquad\textrm{as $j\uparrow\infty$}.
\end{multline*}
Therefore, being $|[u](0)|<\ell^{-1}$, by the very definition of $\widehat{F}$ we infer that
\[
\widehat{F}(u_0,1;\delta\,Q_{e_n})\leq \delta^{n-1}(\gD{|[u](0)|}),
\]
and estimate \eqref{e:minest2} follows at once dividing by $\delta^{n-1}$ and taking the superior 
limit as $\delta\downarrow 0$ in the formula above.
\end{proof}
\begin{remark}\label{r:regimes}
The analysis in the general case of a diverging sequence $f^{(k)}$ is much more intricate because 
of the combination of several effects: the speed of divergence of the $f^{(k)}$'s compared with the 
scaling $\ve_k^{1/2}$ in the definition of $f^{(k)}_k$, and even more the behavior of each $f^{(k)}$ 
close to $1$.
In this remark we limit ourselves to consider those families of functions $f^{(k)}$ satisfying item 
(ii) in Proposition~\ref{p:gl}, another instance shall be discussed in Remark~\ref{r:regimes2} below. 

Therefore, assume for example that $f(s)=\frac{\ell s}{1-s}$, and that 
$\f k$ is defined as in \eqref{e:fjPhit} above but with $a_k=\ve_k^{-1/2}$, thus violating the 
last condition in \eqref{e:alphaj}.
Then, one can show that the $\Gamma$-limit is given by the Mumford-Shah energy introduced 
in \eqref{e:MS}. This claim follows easily by noting that with this choice
\[
\fk kk(s)=\begin{cases}
                    s & 0\leq s\leq 1-\ell\,\ve_k^{1/2} \cr
                    \displaystyle{\ve_k^{1/2}\frac{\ell s}{1-s}} 
                    & 1-\ell\,\ve_k^{1/2}\leq s\leq (1+\ell\,\ve_k^{1/2})^{-1} \cr
                    1 & (1+\ell\,\ve_k^{1/2})^{-1}\leq s\leq 1,
                   \end{cases}
\]
so that $\fk kk(s)\geq s$ for all $s\in[0,1]$, and 
actually $(\fk kk)$ converges uniformly to the identity on $[0,1]$.
Therefore, $AT_k^{Id}\leq\Fk kk\leq AT_k^{\psi}$, with $\psi(s)=\chi_{(0,1]}(s)$
(cp. with \eqref{e:ATk} for the definition of $AT_k^\psi$), 
and the result follows at once from Ambrosio and Tortorelli classical results 
(cp. \cite{amb-tort2}, see also \cite{focardi}).

A similar argument works also in the regime $\aj_k\ve_k^{1/2}\uparrow\infty$, in which
\[
\fk kk(s)=\begin{cases}
                    \aj_k\ve_k^{1/2}s & 0\leq s\leq \aj_k^{-1}\ve_k^{-1/2} \cr
                    1 & \aj_k^{-1}\ve_k^{-1/2}\leq s\leq 1,
                   \end{cases}
\]
for $k$ sufficiently large, so that $\fk kk(s)\to\chi_{(0,1]}(s)$ for all $s\in[0,1]$, and 
again we get the Mumford-Shah energy in the $\Gamma$-limit arguing as above.

Finally, note that for $\aj_k$ as in \eqref{e:alphaj}, we have 
 \[
\fk kk(s)=\begin{cases}
                    \aj_k\ve_k^{1/2}s & 0\leq s\leq 1-\ell\aj_k^{-1} \cr
                    \ve_k^{1/2}\frac{\ell s}{1-s} & 1-\ell\aj_k^{-1}\leq s\leq (1+\ell\ve_k^{1/2})^{-1}\cr
                    1 & (1+\ell\ve_k^{1/2})^{-1}\leq s\leq 1
                   \end{cases}
\]
so that $\fk kk(s)\to\chi_{\{1\}}(s)$ in $[0,1]$.
\end{remark}
\medskip

\subsection{A model with power-law growth at small openings}
\label{subsectpowerlaw}

In Theorem~\ref{t:sublin} below we approximate a model with sublinear surface density 
in the origin and quadratic growth for the volume term. To this aim, let $p>1$ and
consider a function $\psi_p$ satisfying condition \eqref{f0} and 
\be{\label{e:f1p}
\displaystyle\lim_{s\to1}(1-s)^p\psi_p(s)=\kappa, \quad \kappa\in(0,+\infty).
}
Clearly, one can take $\psi_p(s):=\frac{s}{(1-s)^p}$ as prototype. 
The surface energy density $\vartheta_p:[0,+\infty)\to[0,+\infty)$ is defined as $g$ in \eqref{g} 
by 
\be{\label{e:tp}
\vartheta_p(s):=\inf_{(\alpha,\beta)\in\mathcal{U}_s}
\int_0^1|1-\beta|\sqrt{\psi_p^2(\beta)|\alpha'|^2+|\beta'|^2}\,dt,
}
where $\mathcal{U}_s$ has been introduced in \eqref{e:admfnctns}. In this case the integral is 
finite only if $\beta<1$ almost everywhere on the set $\{\alpha'\ne0\}$.
We next prove some properties of $\vartheta_p$ in analogy to 
Propositions~\ref{11}, \ref{p2} and \ref{p:gl}. In what follows, we keep the same notations 
introduced there. We also note that given any curve $(\alpha,\beta)$, the integral to be minimized 
in the definition of $\vartheta_p$ is invariant under reparametrizations of $(\alpha,\beta)$. 
\begin{proposition}\label{p:tp}
 Let $\psi_p$ satisfy \eqref{f0} and \eqref{e:f1p}, let $\vartheta_p:[0,+\infty)\to[0,+\infty)$ be 
 the corresponding surface energy in \eqref{e:tp}. Then, 
\begin{itemize}
 \item[(i)] $\vartheta_p(0)=0$, $\vartheta_p$ is nondecreasing, subadditive, and 
 \be{\label{e:tp4}
 0\leq\vartheta_p(s)\leq 1\wedge c\, s^{\frac2{p+1}},\quad\textrm{for all $s\geq 0$},
 }
 where $c=c(\psi_p)>0$. 
  Moreover, $\vartheta_p\in C^{0,\frac 2{p+1}}\big([0,+\infty)\big)$ and
 \begin{equation}\label{e:tp3}
\kappa^{\frac 2{p+1}}\le \lim_{s\downarrow 0}\frac{\vartheta_p(s)}{s^{\frac2{p+1}}}\le 
\frac{p+1}{2^{\frac 2{p+1}}(p-1)^{\frac{p-1}{p+1}}}\,{\kappa}^{\frac{2}{p+1}};
\end{equation}

 \item[(ii)] $\vartheta_p=\hat{\vartheta}_p$, where
\be{\label{e:tp2}
\hat{\vartheta}_p(s):=\lim_{T\uparrow\infty}\inf_{(\alpha,\beta)\in\mathcal{U}_s(0,T)}
\int_0^T\left(\psi_p^2(\beta)|\alpha^\prime|^2+\frac{(1-\beta)^2}{4}+|\beta^\prime|^2\right)dt;
}
\item[(iii)] the functions
\begin{equation}\label{e:fjPhitt}
f^{(j)}(s):=\frac{j\,s}{1-s}\wedge \psi_p(s),
\end{equation}
 satisfy \eqref{f0} and \eqref{f1}. If $g_j$ denotes the corresponding surface energy in 
\eqref{g}, then $g_j\leq g_{j+1}$ and 
 \be{\label{e:gjtp}
  \lim_{j\to\infty}g_j(s)=\vartheta_p(s)\quad\textrm{for all $s\geq 0$}.
}
  \end{itemize}
\end{proposition}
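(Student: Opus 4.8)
The plan is to carry over, for $\vartheta_p$ and for the sequence $(f^{(j)})$ in \eqref{e:fjPhitt}, the scheme already developed for $g$ in Propositions~\ref{11}, \ref{p2} and \ref{p:gl}; the only genuinely new feature is the faster blow-up of $\psi_p$ at $s=1$ recorded in \eqref{e:f1p}. For item (i), the properties $\vartheta_p(0)=0$, monotonicity, subadditivity and $\vartheta_p\le1$ are obtained verbatim as in Proposition~\ref{11}\ref{e21}--\ref{e19} (admissibility of $(0,1)$; concatenation of quasi-optimal profiles; the three-piece profile with $\beta\equiv0$ on the middle third). For the power-law bound I would use the plateau competitor of Remark~\ref{r:g}: letting $\alpha$ increase from $0$ to $s$ on the middle third with $\beta\equiv\lambda$ there, and interpolating $\beta$ linearly between $1$ and $\lambda$ on the two outer thirds, one gets $\vartheta_p(s)\le(1-\lambda)^2+(1-\lambda)\psi_p(\lambda)\,s$; since $(1-\lambda)\psi_p(\lambda)=(1-\lambda)^{1-p}(\kappa+o(1))$ as $\lambda\uparrow1$ by \eqref{e:f1p}, minimising the elementary expression $\tau^2+(\kappa+o(1))\tau^{1-p}s$ over $\tau=1-\lambda>0$ (critical point $\tau=((p-1)\kappa s/2)^{1/(p+1)}$) yields at once $\vartheta_p(s)\le1\wedge c\,s^{2/(p+1)}$ and the $\limsup$ half of \eqref{e:tp3}. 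The Hölder estimate then follows from monotonicity and subadditivity exactly as Lipschitz continuity did in Proposition~\ref{11}\ref{e19}, via $\vartheta_p(s_2)-\vartheta_p(s_1)\le\vartheta_p(s_2-s_1)\le c(s_2-s_1)^{2/(p+1)}$.

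For the remaining, lower, half of \eqref{e:tp3} I would argue as follows. Since $\vartheta_p(s)\to0$, any minimising sequence for $\vartheta_p(s)$ eventually satisfies $\beta\ge1-\delta$, because $\int|1-\beta||\beta'|\ge(1-\min\beta)^2$ and $\delta^2>\vartheta_p(s)$ once $s$ is small; hence for such $s$ one may restrict the infimum to profiles with $\beta\ge1-\delta$, where \eqref{e:f1p} gives $\psi_p(\beta)\ge a(\delta)(1-\beta)^{-p}$ with $a(\delta)\to\kappa$ as $\delta\downarrow0$. Writing $\tau=1-\beta$ and $\tau_{\max}=\max\tau$, the integrand $\tau\sqrt{\psi_p^2(1-\tau)|\alpha'|^2+|\tau'|^2}$ is then bounded below both by $a(\delta)\tau_{\max}^{1-p}|\alpha'|$ and by $\tau|\tau'|$, so every admissible profile has energy at least $\max\{a(\delta)\,s\,\tau_{\max}^{1-p},\ \tau_{\max}^2\}\ge(a(\delta)s)^{2/(p+1)}$; thus $\vartheta_p(s)\ge(a(\delta)s)^{2/(p+1)}$, and letting first $s\downarrow0$ and then $\delta\downarrow0$ gives $\liminf_{s\downarrow0}\vartheta_p(s)/s^{2/(p+1)}\ge\kappa^{2/(p+1)}$. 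For item (ii) the inequality $\vartheta_p\le\hat\vartheta_p$ is the Cauchy inequality $|1-\beta|\sqrt{\psi_p^2(\beta)|\alpha'|^2+|\beta'|^2}\le\psi_p^2(\beta)|\alpha'|^2+|\beta'|^2+\tfrac14(1-\beta)^2$ followed by reparametrisation, exactly as in Proposition~\ref{p2}; the reverse inequality uses the same change-of-variables construction of Proposition~\ref{p2} (reduction to Lipschitz profiles with $\beta\le1-\eta$ on $\{\alpha'\ne0\}$, reparametrisation by the arc-length type map built from $\sqrt{\eta+\psi_p^2(\beta^\eta)|\alpha'|^2+|(\beta^\eta)'|^2}$, equidistribution of the Modica--Mortola energy, control of the error), the only change being that the uniform continuity of $(1-s)f(s)$ near $s=1$ is replaced by that of $(1-s)^p\psi_p(s)$, which again extends continuously to $s=1$.

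For item (iii), the $f^{(j)}$ clearly satisfy \eqref{f0}; and since $\tfrac{js}{1-s}$ grows slower than $\psi_p(s)\sim\kappa(1-s)^{-p}$ as $s\uparrow1$, one has $f^{(j)}(s)=\tfrac{js}{1-s}$ near $s=1$, so $(1-s)f^{(j)}(s)\to j$, i.e.\ \eqref{f1} holds with $\ell_j=j$. Because $f^{(j)}\le f^{(j+1)}\le\psi_p$ and the integrand in \eqref{g} is monotone in $f$, the sequence $(g_j)$ is nondecreasing and $g_j\le\vartheta_p$, hence $\lim_jg_j\le\vartheta_p$. For the converse I would follow the scheme of Proposition~\ref{p:gl}(ii): using that $f^{(j)}\uparrow\psi_p$ uniformly on compact subsets of $[0,1)$, and reducing — via the estimate $|g_j(s)-g_j^{(\eta)}(s)|\le\eta^2$ of Proposition~\ref{p1} together with a truncation — to profiles bounded away from $\beta=1$, the $f^{(j)}$-integrand converges to the $\psi_p$-integrand on that region; a careful accounting of the energy on the complementary region, where $\beta$ is forced close to $1$ and the term $\int|1-\beta||\beta'|$ already controls everything, shows this region contributes negligibly as $j\to\infty$, whence $\lim_jg_j(s)\ge\vartheta_p(s)$.

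I expect the main obstacle to lie precisely in the reverse inequalities of (ii) and (iii): in contrast to the situation of Theorem~\ref{t:gamma-lim}, where $(1-s)f(s)$ stays bounded near $s=1$, here $\psi_p(\beta)\to\infty$ as $\beta\to1$, so one cannot work directly with profiles attaining $\beta=1$ and must first truncate the profiles away from $1$ and then quantify the error so introduced uniformly in $j$ (respectively in $\eta$); making this quantification rigorous, especially in the proof that $\lim_jg_j=\vartheta_p$, is the delicate point.
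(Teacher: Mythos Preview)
Your plan is essentially correct and mirrors the paper's strategy closely, with two points worth flagging.

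\textbf{A genuine gap in (i).} The statement \eqref{e:tp3} asserts that the \emph{limit} $\lim_{s\downarrow0}\vartheta_p(s)/s^{2/(p+1)}$ exists, not merely that the $\liminf$ and $\limsup$ satisfy the stated inequalities. Your argument establishes the latter only. The paper closes this gap by a rescaling trick: given $s_j\downarrow0$ with near-optimal profiles $(\alpha_j,\beta_j)$ for $\vartheta_p(s_j)$, one shows $\beta_j\to1$ uniformly, then for $k\ge j$ defines new test functions $\overline\alpha_k:=\tfrac{s_k}{s_j}\alpha_j$ and $\overline\beta_k:=1-(s_k/s_j)^{1/(p+1)}(1-\beta_j)$, which belong to $\mathcal{U}_{s_k}$; plugging these in and using that $(1-t)^p\psi_p(t)$ has a finite nonzero limit at $t=1$ yields
\[
\frac{\vartheta_p(s_k)}{s_k^{2/(p+1)}}\le(1+\varepsilon)\frac{\vartheta_p(s_j)}{s_j^{2/(p+1)}}+\frac1j\quad\text{for all }k\ge j\text{ (with }j\text{ large)},
\]
from which convergence follows. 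You should add this step.

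\textbf{A different route in (iii).} Your proposal to reach $\lim_jg_j\ge\vartheta_p$ by truncating $\beta$ at $1-\eta$, using Proposition~\ref{p1} and the uniform convergence $f^{(j)}\to\psi_p$ on $[0,1-\eta]$, can be made to work, but controlling the $\alpha'$-variation near $\beta=1$ uniformly in $j$ requires exactly the kind of $L^1$ bound on $\alpha'_j$ that the paper obtains anyway. The paper organises this more cleanly as a $\Gamma$-convergence argument: since $f^{(j)}\uparrow\psi_p$, the functionals $\mathscr{G}_j(\alpha,\beta):=\int_0^1|1-\beta|\sqrt{(f^{(j)})^2(\beta)|\alpha'|^2+|\beta'|^2}\,dt$ increase pointwise to $\mathscr{G}_\infty$ and hence $\Gamma$-converge to its lower-semicontinuous envelope; the convergence of infima $g_j(s)=\inf\mathscr{G}_j\to\inf\mathscr{G}_\infty=\vartheta_p(s)$ then reduces to a compactness dichotomy for minimising sequences $(\alpha_j,\beta_j)$. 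Either $\inf\beta_j\to0$ along a subsequence, in which case $g_j(s)\to1\ge\vartheta_p(s)$ via $\int|1-\beta_j||\beta'_j|\ge(1-\min\beta_j)^2$; or $\beta_j\ge\delta>0$, in which case $(1-\beta_j)f^{(j)}(\beta_j)\ge\min_{[\delta,1)}(1-t)\psi_p(t)>0$ for $j$ large gives $\|\alpha'_j\|_{L^1}$ bounded, hence compactness in $L^1\times L^1$. This avoids the layer-by-layer error estimates you anticipate as the delicate point, and yields the result with less bookkeeping. Your approach is viable but the abstract route is shorter.

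Everything else in your outline---the plateau competitor for the upper bound in \eqref{e:tp3}, the $\max\{a(\delta)s\tau_{\max}^{1-p},\tau_{\max}^2\}\ge(a(\delta)s)^{2/(p+1)}$ optimisation for the lower bound (which is in fact tidier than the paper's case split), the H\"older continuity via subadditivity, and the transplant of Proposition~\ref{p2} with $(1-s)^p\psi_p(s)$ in place of $(1-s)f(s)$---matches the paper.
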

\begin{proof} We prove (i). The facts that $\vartheta_p(0)=0$ and that $\vartheta_p$ is nondecreasing 
follow easily from the definition. The subadditivity follows as in
Proposition~\ref{11}\ref{e21}.
Moreover, $0\leq\vartheta_p\leq 1$ arguing as in \ref{e19} of Proposition~\ref{11}.

To show \eqref{e:tp4} and the upper bound in \eqref{e:tp3}, let $s,\,\lambda>0$ and consider 
$\alpha:=0$ in $[0,1/3]$, $\alpha:=s$ in $[2/3,1]$ and set $\alpha$ to be the linear interpolation of 
the values $0$ and $s$ on $[1/3,2/3]$; $\beta_\lambda:=1-(\lambda\,s)^{\frac 1{p+1}}$ in $[1/3,2/3]$ 
and set $\beta_\lambda$ to be the linear interpolation of that value to $1$ on $[0,1/3]\cup[2/3,1]$.

Then, clearly $(\alpha,\beta_\lambda)\in\mathcal{U}_s$ and a simple computation shows that 
\begin{equation}\label{e:thetapub}
 \vartheta_p(s)\leq\int_0^1|1-\beta_\lambda|\sqrt{\psi_p^2(\beta_\lambda)|\alpha'|^2
 +|\beta_\lambda^{\prime}|^2}\,dt
 =(\lambda\,s)^{\frac{1}{p+1}}\,\psi_p\big(1-(\lambda\,s)^{\frac 1{p+1}}\big)\,s
 +(\lambda\,s)^{\frac 2{p+1}}.
\end{equation}
By taking $\lambda=1$, since $(1-t)^p\psi_p(t)\leq c$ for some constant $c=c(\psi_p)>0$ and 
for all $t\in[0,1]$, we deduce that 
\[
\vartheta_p(s)\leq (c+1)\,s^{\frac 2{p+1}},
\]
from which inequality \eqref{e:tp4} follows as $0\leq\vartheta_p\leq 1$.

Note that the H\"older continuity of $\vartheta_p$ then follows easily from \eqref{e:tp4} and 
its subadditivity and monotonicity.

Further, by \eqref{e:thetapub} we infer 
\[
\limsup_{s\downarrow 0}\frac{\vartheta_p(s)}{s^{\frac2{p+1}}}\le 
\kappa\,\lambda^{-\frac{p-1}{p+1}}+\lambda^{\frac{2}{p+1}},
\]
minimizing the latter inequality over $\lambda\in(0,\infty)$ yields the upper bound in  (\ref{e:tp3}).
\smallskip

We now prove the lower bound in \eqref{e:tp3}. Let $s_k\to 0$, $s_k>0$, and up to subsequences let the liminf in \eqref{e:tp3} be a limit. Let $\alpha_k,\beta_k$ be competitors for $\vartheta_p(s_k)$ such that
$$\int_{0}^{1} |1-\beta_k| \, \sqrt{\psi_p^2(\beta_k)|\alpha_k'|^2 + |\beta_k'|^2}\, dt\le \vartheta_p(s_k)+s_k.$$
If, after taking a  subsequence, there is a sequence $x_j\in[0,1]$ such that
$$\frac{1-\beta_j(x_j)}{s_j^{\frac{1}{p+1}}}\ge\kappa^{1/(p+1)} \text{ for all $j$},$$
then 
\be{\label{e:MM}\vartheta_p(s_j)+s_j\ge
(1-\beta_j(x_j))^2\ge\kappa^{2/(p+1)}\,s_j^{\frac{2}{p+1}}.}
Otherwise, for all $k$ large  enough
$$\frac{1-\beta_k}{s_k^{\frac{1}{p+1}}}\leq \kappa^{1/(p+1)}$$
must hold uniformly, so that $\beta_k\to1$ uniformly and by (\ref{e:f1p}) for any $\varepsilon>0$
\begin{equation*}
  (1-\beta_k)^p \psi_p(\beta_k) \ge \kappa-\varepsilon \text{ uniformly, for $k$ large enough.}
\end{equation*}
Therefore
\begin{equation}\label{e:the}
\vartheta_p(s_k)+s_k>\int_0^1\psi_p(\beta_k)(1-\beta_k)|\alpha'_k|dt\geq \int_0^1 \frac{\psi_p(
\beta_k)(1-\beta_k)^p}{(1-\beta_k)^{p-1}}|\alpha'_k|dt 
\ge \frac{\kappa-\varepsilon}{\kappa^{(p-1)/(p+1)}} s_k^{2/(p+1)}\,.
\end{equation}
Since $\varepsilon$ was arbitrary this and  (\ref{e:MM}) give
the lower bound in   (\ref{e:tp3}).

Finally we prove that the limit in   (\ref{e:tp3}) exists. We fix a sequence $s_j\downarrow 0$ and choose 
$\alpha_j,\beta_j \in\mathcal{U}_{s_j}$ such that
\begin{equation*}
\int_0^1|1-\beta_j|\sqrt{\psi_p^2(\beta_j)|\alpha'_j|^2  +|\beta_j^{\prime}|^2}\,dt
\le  \vartheta_p(s_j) + \frac1j s_j^{2/(p+1)}\,.
  \end{equation*}
By the computation above we obtain $\beta_j\to1$ uniformly. 
For $k\ge j$ we define  $\alpha_k,\beta_k\in\mathcal{U}_{s_k}$ by
\begin{equation*}
  \overline \alpha_k =\frac{s_k}{s_j}\alpha_j \text{ and }
  \overline \beta_k =1- \Bigl(\frac{s_k}{s_j}\Bigr)^{1/(p+1)}(1-\beta_j)\,.
\end{equation*}
After a straightforward computation, using these test functions in 
the definition of $\vartheta_p(s_k)$ leads to
\begin{equation*}
  \vartheta_p(s_k)\le \Bigl(\frac{s_k}{s_j}\Bigr)^{2/(p+1)} \left[
\int_0^1|1-\beta_j|\sqrt{\psi_p^2(\beta_j)|\alpha'_j|^2  +|\beta_j^{\prime}|^2}\,dt\right]
\sup\bigl\{\frac{\psi_p(t)(1-t)^p}{\psi_p(t')(1-t')^p}: \min \beta_j\le t,t'<1\bigr\}\,.
\end{equation*}
Since $\beta_j\to1$ uniformly as $j\to\infty$, and $\psi_p(t)(1-t)^p$ has a finite limit as $t\to1$, the
$\sup$  converges to $1$ as $j\to\infty$. Therefore we obtain that for every $\ve>0$ if $j$ is sufficiently large, then
\begin{equation*}
  \frac{\vartheta_p(s_k)}{s_k^{2/(p+1)}} \le 
  (1+\ve) \frac{\vartheta_p(s_j)}{s_j^{2/(p+1)}} +\frac1j\hskip5mm \text{ for all } k\ge j\,.
\end{equation*}
This implies that the sequence converges. Since the decreasing 
sequence $s_j$ was  arbitrary, the limit in  (\ref{e:tp3}) exists.

\smallskip
To establish (ii), we note first that by Cauchy inequality $\vartheta_p\leq\hat{\vartheta}_p$.
For the sake of proving the converse inequality, we first claim that
$\alpha$ and $\beta$ in the infimum problem defining $\vartheta_p$ can be taken in $W^{1,\infty}\big((0,1)\big)$. Let $\eta>0$ small and let $\alpha,\beta\in H^1\big((1/3,2/3)\big)$
be competitors for $\vartheta_p(s)$ such that
\be{\label{e:quasimin2}\int_{1/3}^{2/3} |1-\beta| \, \sqrt{\psi_p^2(\beta)|\alpha'|^2 + |\beta'|^2}\, dt\le \vartheta_p(s)+\eta.}
We define $\beta^\eta(t):=\beta(t)\wedge (1-\eta)$ in $[1/3,2/3]$. 
Since $(1-s)^p\psi_p(s)$ has a finite nonzero limit 
at $1$, there is a function $\omega$, with $\omega(\eta)\to0$ as $\eta\to0$, such that
\begin{alignat}1\label{al:com}
  (1-s')^p \psi_p(s') &\le (1+  \omega(\eta))  (1-s)^p \psi_p(s) \text{ for all } s,s'\in [1-\eta,1)\,.
\end{alignat}
In particular, if $1-\eta< \beta(t)< 1$,  then
\begin{equation}\label{e:cont}
  \eta \psi_p(1-\eta) \le \eta^{1-p} (1+\omega(\eta)) (1-\beta(t))^p \psi_p(\beta(t)) \le 
 (1+\omega(\eta)) (1-\beta(t)) \psi_p(\beta(t)) \,.
\end{equation}
We observe that $\beta^\eta=1-\eta$ and $(\beta^\eta)'=0$ almost everywhere on the set 
$\{\beta\ne \beta^\eta\}$ and compute
\begin{alignat}1
\int_{\{\beta\ne \beta^\eta\}}
(1-\beta^\eta) \sqrt{\psi_p^2(\beta^\eta)\,  |\alpha'|^2 + |(\beta^\eta)'|^2}dt &=
\int_{\{\beta\ne \beta^\eta\}}\eta \psi_p(1-\eta)\,  |\alpha'| dt \nonumber\\
&\le (1+\omega(\eta))\int_{\{\beta\ne \beta^\eta\}}(1-\beta) \psi_p(\beta) |\alpha'|\,dt,\label{e:mini}
\end{alignat}
so that by \eqref{e:quasimin2} it follows
\be{\label{e:betaeta}\int_{1/3}^{2/3} |1-\beta^\eta| \, \sqrt{\psi_p^2(\beta^\eta)|\alpha'|^2 + |(\beta^\eta)'|^2}\, dt\le \vartheta_p(s)+\eta+\omega(\eta)+\eta\omega(\eta).}
By density we are able to find two sequences $\alpha_j,\beta_j^\eta\in W^{1,\infty}\big((1/3,2/3)\big)$ (actually in $C^\infty([1/3,2/3])$) such that
$\alpha_j(1/3)=0$, $\alpha_j(2/3)=s$, $\beta_j^\eta(1/3)=\beta_j^\eta(2/3)=1-\eta$, $0\leq\beta\leq 1-\eta$, and converging respectively 
to $\alpha$ and $\beta^\eta$ in $H^1\big((1/3,2/3)\big)$.

Since the function $(1-s)^p\psi_p(s)$ is uniformly continuous in $[0,1-\eta]$ and since $\beta^\eta_j\to\beta^\eta$ also uniformly, we deduce that
for $j$ large it holds
$$\int_{1/3}^{2/3} |1-\beta_j^\eta| \, \sqrt{\psi_p^2(\beta_j^\eta)|\alpha_j'|^2 + |(\beta_j^\eta)'|^2}\, dt\le \vartheta_p(s)+2\eta+\omega(\eta)+\eta\omega(\eta).$$
Finally we extend $\alpha_j$ and $\beta_j^\eta$ in $[0,1]$ defining $\alpha_j:=0$ in $[0,1/3]$, $\alpha_j:=s$ in $[2/3,1]$, and $\beta_j^\eta$
as a linear interpolation of the values $1-\eta$ and $1$. Now $\alpha_j$ and $\beta_j^\eta$ are competitors for $\vartheta_p(s)$ and for $j$ large
they satisfy
$$\int_{0}^{1} |1-\beta_j^\eta| \, \sqrt{\psi_p^2(\beta_j^\eta)|\alpha_j'|^2 + |(\beta_j^\eta)'|^2}\, dt\le \vartheta_p(s)+2\eta+\omega(\eta)+\eta\omega(\eta)+\eta^2$$
and this concludes the proof of the claim.

Let us prove now that $\hat{\vartheta}_p(s)\leq\vartheta_p(s)$. We argue exactly as in Proposition~\ref{p2} until estimate 
\eqref{al:p1}. In doing this we point out that $f$, $g$ and $\hat g$ have to be substituted by 
$\psi_p$, $\vartheta_p$ and $\hat\vartheta_p$, respectively.

By keeping the same notation introduced there, we repeat the computations in \eqref{al:com}-\eqref{e:mini}
and we conclude that
\begin{alignat*}1
  \hat \vartheta_p(s)
  &\le \sqrt \eta + 3\eta^2  
+(1+\omega(\eta)) \int_{0}^1
(1-\beta) \sqrt{\psi_p^2(\beta)\,  |\alpha'|^2 + |\beta'|^2}dt \,.
\end{alignat*}
Since the last integral is less than $\vartheta_p(s)+\eta$ and $\eta$ can be made arbitrarily 
small the inequality $\hat{\vartheta}_p\leq\vartheta_p$ follows at once. 
\medskip

We finally prove (iii).
It is easy to check that $f^{(j)}\leq f^{(j+1)}$, and that $f^{(j)}(s)\to \psi_p(s)$ for all $s\in[0,1)$.
Hence, the sequence $(g_j)$ is nondecreasing and $g_j(s)\leq \vartheta_p(s)$ for all $s\geq 0$.
To prove \eqref{e:gjtp}, with fixed $s\in(0,+\infty)$, consider the functionals 
$\mathscr{G}_j,\,\mathscr{G}_\infty:L^1\big((0,1)\big)\times L^1\big((0,1)\big)\to[0,+\infty]$ defined for 
$(\alpha,\beta)\in\mathcal{U}_s$ by
\[
\mathscr{G}_j(\alpha,\beta):=
\int_0^1|1-\beta|\sqrt{(f^{(j)})^2(\beta)|\alpha^\prime|^2+|\beta^\prime|^2}\,dt
\]
and 
\[
\mathscr{G}_\infty(\alpha,\beta):=
\int_0^1|1-\beta|\sqrt{\psi_p^2(\beta)|\alpha^\prime|^2+|\beta^\prime|^2}\,dt
\]
respectively, and set equal to $+\infty$ otherwise on $L^1\big((0,1)\big)\times L^1\big((0,1\big))$.

Note that $\mathscr{G}_j\leq\mathscr{G}_{j+1}$ and that $\mathscr{G}_j$ pointwise converge 
to $\mathscr{G}_\infty$ by Beppo-Levi's theorem. Therefore, $(\mathscr{G}_j)$ $\Gamma$-converges 
to $\overline{\mathscr{G}_\infty}$, the relaxation of $\mathscr{G}_\infty$ w.r.to the $L^1\times L^1$ 
topology.
Being $g_j(s)=\inf\mathscr{G}_j$ and $\vartheta_p(s)=\inf\mathscr{G}_\infty$ to conclude we
need only to discuss the compactness properties of the minimizing sequences of $\mathscr{G}_j$'s.

To this aim let $\alpha_j$, $\beta_j\in W^{1,\infty}\big((0,1)\big)$ be such that 
$\alpha_j(0)=0$, $\alpha_j(1)=s$, $\beta_j(0)=\beta_j(1)=1$, and
\[
\mathscr{G}_j(\alpha_j,\beta_j)\leq g_j(s)+\frac1j.
\]
Hence, either there exists $\delta>0$ and a subsequence $j_k$ such 
that $\inf_{[0,1]}\beta_{j_k}\geq\delta$, or $\inf_{[0,1]}\beta_j\to0$. 
In the former case, we note that
\[
\big(\min_{t\in[\delta,1)}(1-t)f^{(j_k)}(t)\big)\|\alpha_{j_k}^\prime\|_{L^1}\leq g_{j_k}(s)+\frac1{j_k},
\]
and since for $k$ sufficiently large (depending on $\delta$) 
\[
\min_{t\in[\delta,1)}(1-t)f^{(j_k)}(t)=\min_{t\in[\delta,1)}(1-t)\psi_p(t)>0\,,
\]
we conclude that $\sup_k\|\alpha_{j_k}^\prime\|_{L^1}<+\infty$. Moreover, as 
\[
\frac12\sup_j\|\big((1-\beta_j)^2\big)^\prime\|_{L^1}\leq \sup_j g_j(s)\leq\vartheta_p(s),
\]
the sequence $(\alpha_{j_k},\beta_{j_k})$ is pre-compact in $L^1\times L^1$, so that 
by standard properties of $\Gamma$-convergence we may conclude that 
\begin{equation}\label{e:tpcasoa}
\lim_j g_j(s)=\lim_j\inf\mathscr{G}_j=\min\overline{\mathscr{G}_\infty}
=\inf\mathscr{G}_\infty=\vartheta_p(s).
\end{equation}
Instead, in case $\inf_{[0,1]}\beta_j\to0$, set $s_j=\mathrm{argmin}_{[0,1]}\beta_j$ and deduce that
\begin{equation}\label{e:tpcasob}
\lim_jg_j(s)\geq \liminf_j\left(\int_0^{s_j}|1-\beta_j||\beta_j^\prime|\,dt
+\int_{s_j}^1|1-\beta_j||\beta_j^\prime|\,dt\right)\geq1.
\end{equation}
Together with inequality $\vartheta_p(s)\leq 1$, the latter formula provides the conclusion.
\end{proof}

The functionals $F_k^{(k)}$ corresponding to the sequence $(f^{(j)})$ in \eqref{e:fjPhitt} of 
Proposition~\ref{p:tp} provide an approximation of $\Phi_p:L^1(\Omega)\to[0,+\infty]$ defined by
\begin{equation}\label{e:Phitt}
\Phi_p(u):=
  \begin{cases} 
\displaystyle{\int_\Omega |\nabla u|^2dx+\int_{J_{u}}\vartheta_p(|[u]|)d\Hn}
& \textrm{if $u\in GSBV(\Omega)$,}\cr\cr
+\infty & \textrm{otherwise},
  \end{cases} 
\end{equation}
with $\vartheta_p$ is defined in formula \eqref{e:tp}.
\begin{theorem}\label{t:sublin}
Suppose that $(\f j)$ is as in \eqref{e:fjPhitt} above.

Then, the functionals $\Fk kk$ $\Gamma$-converge in $\Lr$ to $\widetilde{\Phi}_p$, where 
\begin{equation}\label{e:p}
\widetilde{\Phi}_p(u,v):=\begin{cases} 
\Phi_p(u) & \textrm{if $v=1$ $\qo$,}\cr
+\infty & \text{otherwise}.
\end{cases} 
\end{equation}
\end{theorem}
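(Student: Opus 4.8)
The plan is to run the same two-sided argument used for Theorems~\ref{t:gamma-lim} and \ref{t:Dugdale}: the $\Gamma\hbox{-}\liminf$ inequality is extracted from Theorem~\ref{t:gamma-lim} applied to the monotone family $(\f j)$ of \eqref{e:fjPhitt}, and the $\Gamma\hbox{-}\limsup$ inequality from the abstract integral–representation machinery of Proposition~\ref{p:limsupndim}, with $\vartheta_p$, $\hat\vartheta_p$ playing the roles of $g$, $\hat g$.

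\emph{Lower bound.} By Proposition~\ref{p:tp}(iii) each $\f j$ satisfies \eqref{f0}--\eqref{f1}, and $\ell_j=j$ since $\frac{js}{1-s}\le\psi_p(s)$ for $s$ near $1$ (here $p>1$ enters). Hence Theorem~\ref{t:gamma-lim} gives $\Gamma\hbox{-}\lk \Fk jk=\F j$, whose volume density is the function $h_j$ of \eqref{h} with $\ell=j$ (so $h_j(s)=s^2$ for $s\le j/2$), whose surface density is the $g_j$ of \eqref{g}, and which contains the diffuse term $j\,|D^cu|(\Omega)$. Since $\f j\le\f k$ for $j\le k$ we have $\Fk jk\le\Fk kk$ whenever $k\ge j$, hence $\Gamma\hbox{-}\li\Fk kk(u,v)\ge\Gamma\hbox{-}\li\Fk jk(u,v)=\F j(u,v)$ for every $j$. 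If the left-hand side is finite then $v=1$ $\qo$ and $u\in GBV(\Omega)$; moreover $j\,|D^cu|(\Omega)\le\F j(u,1)$ is bounded in $j$, forcing $|D^cu|(\Omega)=0$, i.e.\ $u\in GSBV(\Omega)$. Finally, since $h_j\uparrow s^2$ pointwise and, by Proposition~\ref{p:tp}(iii), $g_j\uparrow\vartheta_p$, monotone convergence yields $\int_\Omega|\nabla u|^2dx+\int_{J_u}\vartheta_p(|[u]|)d\Hn=\lim_j\big(\int_\Omega h_j(|\nabla u|)dx+\int_{J_u}g_j(|[u]|)d\Hn\big)$, whence
\[
\Phi_p(u)\le\Gamma\hbox{-}\li\Fk kk(u,1),
\]
and since $\Phi_p(u)=+\infty$ off $GSBV(\Omega)$ this proves $\widetilde{\Phi}_p\le\Gamma\hbox{-}\li\Fk kk$ on $\Lr$.

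\emph{Upper bound.} Fix a subsequence of $(\Fk kk)$ that $\overline\Gamma$-converges to some $\widehat F$ (separability of $L^1$, \cite[Theorem~16.9]{dalmaso}). Lemmas~\ref{l:wsub} and \ref{l:boundglimsup} carry over unchanged, since their proofs only use that $\fk kk=1\wedge\ve_k^{1/2}\f k$ is nondecreasing and bounded by $\chi_{(0,1]}$; thus $\widehat F$ is the restriction to open sets of a Borel measure, $\widehat F(\cdot,1)$ is $L^1$-lower semicontinuous (being the supremum over $A\subset\subset\Omega$ of the $L^1$-lsc functionals $\Gamma\hbox{-}\ls\Fk kk(\cdot,1;A)$), and $\widehat F(u,1;\Omega\setminus J_u)\le\int_\Omega|\nabla u|^2dx$ for $u\in SBV^2(\Omega)$. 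For the jump part one perturbs $\widehat F$ by $\lambda\big(\int_\Omega|\nabla u|^2dx+\int_{J_u}(1+|[u]|)d\Hn\big)$, applies the integral representation of \cite{bou-fon-leo-masc}, and is reduced, exactly as in \eqref{e:minest}, to showing $\limsup_{\delta\downarrow0}\delta^{1-n}\widehat F(u_x,1;x+\delta\,Q_{\nu_u(x)})\le\vartheta_p(|[u](x)|)$ for $\Hn$-a.e.\ $x\in J_u$. Using $\vartheta_p=\hat\vartheta_p$ (Proposition~\ref{p:tp}(ii)) one picks $T_\eta$ and $\alpha_\eta,\beta_\eta\in\mathcal{U}_{|[u](x)|}(0,T_\eta)$ with $\int_0^{T_\eta}\big(\psi_p^2(\beta_\eta)|\alpha_\eta'|^2+\tfrac{(1-\beta_\eta)^2}4+|\beta_\eta'|^2\big)dt\le\vartheta_p(|[u](x)|)+\eta$ and rescales the profile by $\ve_{k_j}$; the crucial point is $\fk{k_j}{k_j}=1\wedge\ve_{k_j}^{1/2}\f{k_j}\le\ve_{k_j}^{1/2}\psi_p$ (because $\f{k_j}\le\psi_p$), so the energy of the rescaled competitor is $\le\delta^{n-1}$ times exactly that integral. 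Letting $\lambda\downarrow0$ gives $\widehat F(u,1)\le\int_\Omega|\nabla u|^2dx+\int_{J_u}\vartheta_p(|[u]|)d\Hn$ for all $u\in SBV^2(\Omega)$. To reach all $u\in GSBV(\Omega)$ with $\Phi_p(u)<\infty$ — which, since $\vartheta_p$ is only sublinear at $0$, includes functions with $\Hn(J_u)=+\infty$, for which $\MS(u)=+\infty$ — I would first truncate to reduce to $u\in SBV(\Omega)\cap L^\infty(\Omega)$ with $\nabla u\in L^2$ and $\int_{J_u}\vartheta_p(|[u]|)d\Hn<\infty$, and then, by a relaxation/density argument in the spirit of \cite{bou-bra-but}, approximate such $u$ in $L^1$ by $SBV^2(\Omega)$-functions $u_h$ with $\nabla u_h\to\nabla u$ in $L^2$ and $\int_{J_{u_h}}\vartheta_p(|[u_h]|)d\Hn\to\int_{J_u}\vartheta_p(|[u]|)d\Hn$; the $L^1$-lower semicontinuity of $\widehat F(\cdot,1)$ then gives $\widehat F(u,1)\le\Phi_p(u)$. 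Combined with the lower bound this identifies the subsequential $\Gamma$-limit as $\widetilde{\Phi}_p$ independently of the subsequence, and Urysohn's property (\cite[Proposition~8.3]{dalmaso}) upgrades this to $\Gamma$-convergence of the whole family.

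\emph{Main obstacle.} The monotone-convergence step in the lower bound and the rescaled one-dimensional competitor in the upper bound are routine once Propositions~\ref{p:tp}(ii)--(iii) are in hand. The delicate point is the final step of the upper bound: identifying the $L^1$-lower semicontinuous envelope of $u\mapsto\int_\Omega|\nabla u|^2dx+\int_{J_u}\vartheta_p(|[u]|)d\Hn$, initially defined on $SBV^2(\Omega)$, with $\Phi_p$ on $GSBV(\Omega)$, and in particular checking that the sublinear but superlinearly steep behavior $\vartheta_p(s)\sim c\,s^{2/(p+1)}$ at the origin (Proposition~\ref{p:tp}(i)) makes the effective ``recession'' constant infinite, so that no diffuse (Cantor) term is created in the relaxation.
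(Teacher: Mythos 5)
Your proposal is correct and tracks the paper's proof step for step: the lower bound uses the monotonicity $\Fk jk\leq\Fk kk$ for $j\leq k$, Theorem~\ref{t:gamma-lim} with $\ell_j=j$, and the monotone limits $h_j\uparrow s^2$, $g_j\uparrow\vartheta_p$ from Proposition~\ref{p:tp}(iii); the upper bound uses $\overline\Gamma$-compactness, Lemmas~\ref{l:wsub} and \ref{l:boundglimsup}, the cell-formula reduction via $\vartheta_p=\hat\vartheta_p$ from Proposition~\ref{p:tp}(ii), and relaxation plus truncation through \cite{bou-bra-but}, exactly as in Proposition~\ref{p:limsupndim}. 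The ``delicate point'' you flag at the end is precisely what \cite[Propositions~3.3-3.5]{bou-bra-but} settle: since $h(t)=t^2$ is superlinear and $\vartheta_p(s)/s\to\infty$ as $s\downarrow 0$ by Proposition~\ref{p:tp}(i), the Cantor coefficient in the relaxation of $\Phi_\infty$ is $+\infty$, so the relaxed functional on $BV$ equals $\Phi_p$ (finite only on $GSBV$), which is exactly the conclusion the paper draws from that reference.
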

\begin{proof}
 By monotonicity of the sequence $(f^{(j)}_k)$ we have that $F_k^{(k)}\geq F_k^{(j)}$ for $k\geq j$, 
 so that by Theorem~\ref{t:gamma-lim} if $\Gamma\hbox{-}\liminf_kF_k^{(k)}(u,v)<+\infty$ then 
 $u\in GBV(\Omega)$, $v=1$ $\Ln$-q.o. on $\Omega$ and for all $j\in\N$
 \[
  \Gamma\hbox{-}\liminf_kF_k^{(k)}(u,1)\geq \Gamma\hbox{-}\lim_kF_k^{(j)}(u,1)%\\
  =\int_\Omega h_j(|\nabla u|)dx+\int_{J_u}g_j(|[u]|)d\Hn+j|D^cu|(\Omega),
 \]
 where $h_j$ and $g_j$ are defined, respectively, by \eqref{h} and \eqref{g} with $f^{(j)}$ in place 
 of $f$. By letting $j\uparrow\infty$, we get that   
\[
h_j(s)\uparrow s^2,\quad\text{ and}\quad g_j(s)\uparrow\vartheta_p(s)\quad \textrm{for all $s\geq 0$.}
\]
Indeed, the former convergence follows from the explicit formula $h_j(s)=s^2$ for $s\in[0,j/2]$ and 
$h_j(s)=js-j^2/4$ for $s\in[j/2,+\infty)$, while the latter in view of (iii) in Proposition~\ref{p:tp}.
Therefore, by Beppo-Levi's theorem we conclude that $u\in GSBV(\Omega)$ with 
 \[
 \Gamma\hbox{-}\liminf_kF_k^{(k)}(u,1)\geq\widetilde{\Phi}_p(u,1).
 \]
 
 To prove the upper bound inequality we note that Lemma~\ref{l:wsub} and \ref{l:boundglimsup} 
 still hold true in this setting as there we have only used that each function $f_k=1\wedge\ve_k^{1/2} f$ 
 in \eqref{fk} is nondecreasing and bounded by $1$ from above, properties enjoyed by $\fk kk$ as well
 (cp. also Theorem~\ref{t:Dugdale}). 
 
 Hence, we may argue again as in Proposition~\ref{p:limsupndim} and reduce ourselves to prove 
 the estimate
 \begin{equation}\label{e:minestp}
 \limsup_{\delta\downarrow 0}\frac 1{\delta^{n-1}}\widehat{F}(u_x,1;x+\delta\, Q_{\nu_u(x)})\leq 
 \vartheta_p(|[u](x)|),
 \end{equation}
 for $u\in SBV^2(\Omega)$ and for $\Hn$-a.e. $x\in J_u$, where $\widehat{F}$ is the 
 $\bar{\Gamma}$-limit of a properly chosen subsequence $(\Fk{k_j}{k_j})$ of $(\Fk kk)$.
 Given \eqref{e:minestp}, we deduce the upper bound estimate as follows: we employ first 
 \cite[Propositions~3.3-3.5]{bou-bra-but} to get the estimate 
 $\widehat{F}(\cdot,1)\leq \widetilde{\Phi}_p(\cdot,1)$
 on the full $SBV$ space, by relaxing the functional $\Phi_\infty:BV(\Omega)\to[0,+\infty]$
\[
\Phi_\infty(u):=\begin{cases}
\displaystyle{
\int_\Omega |\nabla u|^2dx+\int_{J_u}\vartheta_p(|[u](x)|)\,d\Hn} & \textrm{if $u\in SBV^2(\Omega)$}
\cr
+\infty & \textrm{otherwise on $BV(\Omega)$,}
             \end{cases}
\] 
 w.r.to the $w\ast\hbox{-}BV$ topology on $BV(\Omega)$. This implies $\widehat{F}(\cdot,1)\leq \Phi_p$ 
 on $BV(\Omega)$. We get the required estimate on the whole $GSBV\cap L^1(\Omega)$ by the usual 
 truncation argument. We then argue as in Proposition~\ref{p:limsupndim} to show that the whole family 
 $(F_k^{(k)})$ $\Gamma\hbox{-}$converges to $\widetilde{\Phi}_p$.

The proof of 
 \eqref{e:minestp} is identical to the proof of \eqref{e:minest}
in Proposition~\ref{p:limsupndim} and therefore not repeated.
 \end{proof}

\medskip

\subsection{Griffith's brittle fracture}
\label{subsecgriffith}
Finally, we show how to approximate the Mumford-Shah functional by means of any 
sequence $\big(f^{(j)}\big)$ satisfying item (iii) in Proposition~\ref{p:gl}. 
Thus, we recover the original approximation scheme of Ambrosio and Tortorelli 
\cite{amb-tort1}, \cite{amb-tort2} (see also \cite{focardi}).
\begin{theorem}\label{t:MS}
Suppose that $(\f j)$ satisfies 
$\f{j}\leq \f{j+1}$, $\ell_j\uparrow\infty$ and $\f{j}(s)\uparrow \infty$ pointwise in $(0,1).$
Then, the functionals $\Fk kk$ $\Gamma$-converge in $\Lr$ to the functional $\MSt$ 
defined as follows  
\begin{equation}\label{e:Ft}
\MSt(u,v):=\begin{cases} 
\MS(u) & \textrm{if $v=1$ $\qo$,}\cr
+\infty & \text{otherwise}.
\end{cases} 
\end{equation}
\end{theorem}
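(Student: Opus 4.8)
The plan is to obtain both $\Gamma$-convergence inequalities by a two-sided comparison: the monotonicity of $(\f j)$ gives a lower bound, and the crude bound $\fk kk\le\chi_{(0,1]}$ gives an upper bound. In this case, unlike for Theorems~\ref{t:Dugdale} and \ref{t:sublin}, no integral-representation argument is needed and the full sequence $(\Fk kk)$ can be handled directly. First I would treat the lower bound. Since $\f j\le\f{j+1}$, we have $\Fk jk\le\Fk kk$ for every $k\ge j$. Each $\f j$ satisfies \eqref{f0} and \eqref{f1} with its own constant $\ell_j\in(0,+\infty)$, so Theorem~\ref{t:gamma-lim} applies and $\Fk jk$ $\Gamma$-converges in $\Lr$ to the functional $\F j$ which equals
\[
\int_\Omega h_j(|\nabla u|)\,dx+\int_{J_u}g_j(|[u]|)\,d\Hn+\ell_j\,|D^cu|(\Omega)
\]
when $v=1$ $\qo$ and $u\in GBV(\Omega)$, and $+\infty$ otherwise, with $h_j$, $g_j$ the densities \eqref{h}, \eqref{g} associated with $\f j$. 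Since $\Fk kk\ge\Fk jk$ for $k\ge j$, Theorem~\ref{t:gamma-lim} then yields $\Gamma\hbox{-}\li\Fk kk(u,v)\ge\F j(u,v)$ for every $j$, hence $\Gamma\hbox{-}\li\Fk kk(u,v)\ge\sup_j\F j(u,v)$ for all $(u,v)\in\Lr$.

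Next I would identify $\sup_j\F j$ with $\MSt$. On the set where $v\ne1$ both functionals are $+\infty$, so only $u$ with $v=1$ matter. If $u\notin GSBV(\Omega)$ then either $u\notin GBV(\Omega)$, in which case all $\F j(u,1)=+\infty$, or $|D^cu|(\Omega)>0$, in which case $\ell_j|D^cu|(\Omega)\to+\infty$; in either case $\sup_j\F j(u,1)=+\infty=\MS(u)$. If instead $u\in GSBV(\Omega)$, so that $|D^cu|(\Omega)=0$, I would pass to the limit in $j$ by monotone convergence: the sequence $(h_j)$ is nondecreasing and $h_j(s)=s^2$ as soon as $s\le\ell_j/2$, hence $h_j(|\nabla u|)\uparrow|\nabla u|^2$; by item~(iii) of Proposition~\ref{p:gl} the sequence $(g_j)$ is nondecreasing with $g_j(s)\uparrow\chi_{(0,+\infty)}(s)$, and since $|[u]|>0$ $\Hn$-a.e.\ on $J_u$ we get $\int_{J_u}g_j(|[u]|)\,d\Hn\uparrow\Hn(J_u)$. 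Therefore $\sup_j\F j(u,1)=\int_\Omega|\nabla u|^2\,dx+\Hn(J_u)=\MS(u)$, proving the claim.

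For the upper bound I would compare from above. By \eqref{f0} we have $\f k(0)=0$, and $\fk kk=1\wedge\ve_k^{1/2}\f k\le1$ on $(0,1]$, so $\fk kk\le\psi$ pointwise on $[0,1]$ with $\psi:=\chi_{(0,1]}$; this $\psi$ is nondecreasing, lower semicontinuous, and satisfies $\psi^{-1}(0)=0$, $\psi(1)=1$. Hence $\Fk kk\le AT_k^\psi$ on all of $\Lr$, and the classical Ambrosio--Tortorelli result recalled after \eqref{e:ATk}, namely $AT_k^\psi$ $\Gamma$-converges to $\MSt$ in $\Lr$, gives $\Gamma\hbox{-}\ls\Fk kk\le\MSt$. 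Combining the three steps,
\[
\MSt=\sup_j\F j\le\Gamma\hbox{-}\li\Fk kk\le\Gamma\hbox{-}\ls\Fk kk\le\MSt,
\]
so equality holds throughout and $\Fk kk$ $\Gamma$-converges in $\Lr$ to $\MSt$.

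I do not expect a genuine obstacle, since the whole argument is a sandwich between two comparison functionals. The one step requiring some care is the identification $\sup_j\F j=\MSt$: here the divergence $\ell_j\to\infty$ of the Cantor-part coefficient is precisely what forces the $\Gamma$-limit to concentrate on $GSBV(\Omega)$ and, together with $g_j\uparrow\chi_{(0,+\infty)}$, turns the cohesive surface density into the brittle one; sorting out the $GBV$-versus-$GSBV$ dichotomy and applying monotone convergence to $(h_j)$ and $(g_j)$ is routine but should be written carefully.
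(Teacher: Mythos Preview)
Your proposal is correct and follows essentially the same strategy as the paper: the lower bound is obtained by the monotone comparison $\Fk jk\le\Fk kk$ together with Theorem~\ref{t:gamma-lim} and item~(iii) of Proposition~\ref{p:gl}, and the upper bound by the pointwise estimate $\fk kk\le\chi_{(0,1]}$ and the Ambrosio--Tortorelli result. Your treatment of the $GBV$/$GSBV$ dichotomy and of the monotone convergence of $(h_j)$ and $(g_j)$ is in fact slightly more explicit than the paper's.
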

\begin{proof}[Proof of Theorem~\ref{t:MS}]
We start off as in the proof of Theorem~\ref{t:Dugdale} and note that 
by the very definitions $\Fk jk\leq \Fk kk$ for $j\leq k$ being $(\f j)$ nondecreasing
by assumption. Thus, by Theorem~\ref{t:gamma-lim} we deduce
\begin{equation}\label{e:liminfFj}
\Gamma\hbox{-}\liminf_k \Fk kk(u,v)\geq \F j(u,v),
\end{equation}
where $\F j$ is defined as $F$ in \eqref{F} with $f$ substitued by $\f j$ in formulas
\eqref{h} defining $h_j$, and \eqref{g} defining $g_j$. In particular, the corresponding 
volume density is given by 
\[
h_j(t)=\begin{cases}
t^2 & t\leq\frac{\ell_j}{2} \cr
\ell_j\,t-\frac{\ell_j^2}{4} & t\geq\frac{\ell_j}{2},
       \end{cases}
\]
where $\ell_j$ is the value of the limit in \eqref{f1} and it satisfies $\ell_j\uparrow\infty$. 
Thus $h_j(s)\leq s^2$ and $\lim_jh_j(s)=s^2$ for all $s\in[0,+\infty)$. 
Moreover, the surface energy densities $g_j$ are dominated by the constant $1$, 
and by item (iii) in Proposition~\ref{p:gl} we have $\lim_jg_j(s)=\chi_{(0,+\infty)}(s)$ for all 
$s\in[0,+\infty)$. 
In conclusion, if $\Gamma\hbox{-}\liminf_k\Fk kk(u,v)<+\infty$, by letting $j\uparrow\infty$ in 
\eqref{e:liminfFj} we infer that $v=1$ $\qo$, $u\in GSBV(\Omega)$ and by the Beppo-Levi's
theorem we get
\[
\Gamma\hbox{-}\liminf_k\Fk kk(u,v)\geq \MSt(u).
\]
Eventually, we establish the limsup inequality. Set $\psi:=\chi_{(0,1]}$, we observe once more 
that $\Fk kk\leq AT_k^\psi$ for every $k$, where $AT_k^\psi$ has been defined in \eqref{e:ATk}.
Therefore the conclusion follows by the Ambrosio and Tortorelli result \cite{amb-tort2} (see 
also \cite{focardi}).
\end{proof}
\begin{remark}\label{r:regimes2}
In Remark~\ref{r:regimes} we have shown that both the divergence of the $f_k$'s and the scaling 
with $\ve_k^{1/2}$ in the definition of $f^{(k)}_k$ are influencing the asymptotic behavior of 
the related sequence $(F_k^{(k)})$. 
Here, we show that also the sequence of values of the limits in $1$ of the functions $(1-s)f^{(k)}(s)$ 
is playing a role. 
In particular, we highlight that the pointwise limit of $(f_k^{(k)})$ is not determining 
the asymptotics of $(F^{(k)}_k)$. 

Indeed, suppose that $f^{(k)}(s):=a_k\frac{s}{1-s}$, where $a_k\uparrow\infty$, then 
 \[
 f^{(k)}_k(s)=\begin{cases}
               a_k\ve_k^{1/2}\frac{s}{1-s} & 0\leq s\leq(1+a_k\ve_k^{1/2})^{-1} \cr
               1 & (1+a_k\ve_k^{1/2})^{-1}\leq s\leq 1,
              \end{cases}
 \]
and by letting $k\uparrow\infty$ we infer that 
 \[
 f^{(k)}_k(s)\to \begin{cases}
               \chi_{\{1\}}(s) & \textrm{if $a_k\ve_k^{1/2}\downarrow 0$}   \cr
               \gamma\,\frac{s}{1-s} \wedge 1 & \textrm{if $a_k\ve_k^{1/2}\to\gamma\in(0,+\infty)$}\cr
               \chi_{(0,1]}(s) & \textrm{if $a_k\ve_k^{1/2}\uparrow \infty$}.
              \end{cases}
 \]
Hence, by taking also into account the examples in Remark~\ref{r:regimes}, we have 
built two sequences of functions $f_k^{(k)}$ both converging to $\chi_{\{1\}}$ but giving 
rise in the $\Gamma$-limit on one hand to the Dugdale's cohesive energy and on the other 
hand to a Griffith's type energy.
 \end{remark}

\section*{Acknowledgments} 
Part of this work was conceived when M.~Focardi was visiting the University of Bonn in winter 2014. 
He would like to thank the Institute for Applied Mathematics  for the hospitality and the stimulating scientific 
atmosphere provided during his stay.

M.~Focardi and F.~Iurlano are grateful to Gianni Dal Maso for stimulating discussions and for many 
insightful remarks. They are members of the Gruppo Nazionale per
l'Analisi Matematica, la Probabilit\`a e le loro Applicazioni (GNAMPA)
of the Istituto Nazionale di Alta Matematica (INdAM). 

F.~Iurlano was funded under a postdoctoral fellowship by the Hausdorff Center for Mathematics.
%
%
%\bibliographystyle{siam} %{plain} %{abbrv}
%\bibliography{biblio}

\end{document}